\newtheorem{theorem}{Theorem}[section]
\newtheorem{lemma}[theorem]{Lemma}
\newtheorem{proposition}[theorem]{Proposition}
\newtheorem{corollary}[theorem]{Corollary}
\newtheorem{conjecture}[theorem]{Conjecture}
\newtheorem{question}[theorem]{Question}
\newenvironment{definition}{\refstepcounter{theorem}\begin{trivlist}\item [\hskip \labelsep {\bfseries Definition \thetheorem}]}{\end{trivlist}}
\theoremstyle{remark}
\newtheorem{remark}[theorem]{Remark}
\newcommand{\Z}{\mathbb{Z}}
\newcommand{\R}{\mathbb{R}}
\newcommand{\C}{\mathbb{C}}
\newcommand{\F}{\mathbb{F}}
\newcommand{\PP}{\mathbb{P}}
\newcommand{\bk}{{\bf k}}
\newcommand{\zp}{\mathbb{Z}_{\geq 0}}
\newcommand{\moda}[3][{[A/\pm1]}]{\overline{M}\left(#1; #2; #3\right)}
\newcommand{\modac}[2]{M^\circ\left([A/\pm1]; #1; #2\right)}
\renewcommand{\tilde}{\widetilde}
\renewcommand{\hat}{\widehat}
\DeclareMathOperator{\map}{Map}
\DeclareMathOperator{\Km}{Km}
\begin{document}

\bibliographystyle{alpha}

\title[Hyperelliptic curves on Abelian surfaces]{Counting hyperelliptic curves on an Abelian surface with quasi-modular forms}
\author{Simon C. F. Rose}
\address{Department of Mathematics, University of British Columbia, Canada}
\email{scfr@math.ubc.ca}

\begin{abstract}
In this paper we produce a generating function for the number of hyperelliptic curves (up to translation) on a polarized Abelian surface using the crepant resolution conjecture and the Yau-Zaslow formula. We present a formula to compute these in terms of MacMahon's generalized sum-of-divisors functions, and prove that they are quasi-modular forms.
\end{abstract}

\maketitle

\tableofcontents

\section{Introduction}\label{chap:intro}

Let $(A_{h-1},L_{h-1})$ be a polarized abelian surface with polarization of type $(1, h - 1)$. Up to translation in $A_{h-1}$, there is an $(h - 2)$-dimensional family of curves of arithmetic genus $h$ in the homology class $c_1(L_{h-1})^\vee$. The codimension of the hyperelliptic locus in $\overline{M}_{h,0}$ is $h - 2$, and so the following natural question arises:

\begin{question}\label{q_main}
Given a polarized abelian surface $(A_{h-1},L_{h-1})$, how many curves (up to translation in $A_{h-1}$) of geometric genus $g$ in the class $c_1(L_{h-1})^\vee$ are hyperelliptic?
\end{question}

We will often write $A_{h-1}$ or $L_{h-1}$ simply as $A$ or $L$ if there is no possibility of confusion, and when the degree of the polarization is not important. We will also for convenience make the substitution
\[
n = h - 1
\]
which works to make most of the formul\ae{} cleaner. Furthermore, throughout this paper, whenever we say ``the number of curves in $A$...'' we will always be referring to the number of curves in the class $c_1(L)^\vee$ {\em up to translation in $A$}. Let $N_{g,h}$ denote the number of hyperelliptic curves of geometric genus $g$ and arithmetic genus $h$ in a fixed $(A,L)$. Let
\[
F_g(u) = \sum_{h=g}^\infty N_{g,h}u^{h - 1}
\]
be the generating function for these numbers. We will give an explicit formula for $F_g(u)$ in terms of quasi-modular forms.


\begin{remark}
Note that in the case $g = 2$, all curves are hyperelliptic. In \cite{gottsche}, it is shown that
\[
F_2(u) = \sum_{d=0}^\infty \sigma_1(d)u^d = E_2(u) + \frac{1}{24}
\]
where $\sigma_1(d) = \sum_{k \mid d} k$, and $E_2$ is the Eisenstein series of weight 2. Thus we see that $F_2$ is in the ring of quasi-modular forms.
\end{remark}

The goal of this paper is to transform this natural enumerative problem into the language of orbifold Gromov-Witten theory, and to use the crepant resolution conjecture \cite{crc} and the Yau-Zaslow formula \cite{yau_zaslow} to compute the generating functions $F_g$ for all $g$.

We should remark that this number $N_{g,h}$ is not necessarily well defined---that is, independent of the choice of $A$---nor necessarily finite. In Section \ref{sec_gw_hyper} we will interpret it in terms of Gromov-Witten invariants which will be defined for all polarized $A$ with $c_1(L)^\vee$ primitive; in the case that $A$ is sufficiently generic, we expect that this will coincide with the honest count of hyperelliptic curves of geometric and arithmetic genera $g$ and $h$, respectively.

In fact, we provide a refinement of this count. Let $A[2]$ denote the collection of 2-torsion points in $A$. As we will see in Section \ref{sec_gw_hyper}, we can translate a hyperelliptic curve so that all of its Weirstrass points all lie on points of $A[2]$. We can then use the number of Weirstrass points lying over each $v \in A[2]$ to refine our count, as follows. 

Let $\bk : A[2] \to \zp$ be a function, denote by $|\bk| = \sum_{v \in A[2]}\bk(v)$, and let $g$ be such that $2g + 2 = |\bk|$. Let $N_{\bk,h}$ denote the number of curves $C \subset A$ of geometric genus $g$ and arithmetic genus $h$ whose normalizations $\overline{C}$ are hyperelliptic and with $\bk(v)$ Weirstrass points lying over $v$ for each $v \in A[2]$ (See Section \ref{sec_gw_hyper}, equation \eqref{nkn} for a precise definition). Let $P$ be the 
collection of $v \in A[2]$ so that $\bk(v)$ is odd. 
The main theorem of this paper is the following.

\begin{theorem}\label{thm_main}
Assume the Gromov-Witten crepant resolution conjecture for the resolution $\Km(A) \to A/\pm1$ (See Section \ref{subsec_crc}). Then the generating function $F_{g,\bk}(u) = \sum_{h = g}^\infty N_{\bk,h}u^{h-1}$ is given by
\begin{equation}\label{eq_main}
F_{g,\bk}(u) = \displaystyle E(u)^{\tfrac{1}{2}|S| - 2}\prod_{v\in S}A_{\frac{\bk(v)-1}{2}}(u^4)\prod_{v \notin S} C_{\frac{\bk(v)}{2}}(u^2)
\end{equation}
when $P$ satisfies an easily-verified condition (see Remark \ref{rmk_condition}), and is zero otherwise. The functions $E(q), A_i(q)$, and $C_i(q)$ are given by
\begin{gather*}
E(q) = \sum_{k=0}^\infty \sigma_1(2k+1)q^{2k+1}\\
A_i(q) = \sum_{0 < m_1 < \cdots < m_i} \frac{q^{m_1 + \cdots + m_i}}{(1 - q^{m_1})^2 \cdots (1 - q^{m_i})^2} \\
C_i(q) = \sum_{0 < m_1 < \cdots < m_i} \frac{q^{2m_1 + \cdots + 2m_i-i}}{(1 - q^{2m_1-1})^2 \cdots (1 - q^{2m_i-1})^2}
\end{gather*}
which are all quasi-modular forms.
\end{theorem}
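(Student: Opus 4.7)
The plan is to rephrase the enumerative problem as a genus-zero orbifold Gromov--Witten computation on $[A/\pm 1]$, transfer it to the Kummer K3 surface $\Km(A)$ via the crepant resolution conjecture, and extract the generating function using Yau--Zaslow.

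First I would interpret $N_{\bk,h}$ as an orbifold GW invariant. After translating so that the $2g+2$ Weierstrass points of a hyperelliptic normalization $\overline{C}$ all lie on $A[2]$, the hyperelliptic involution of $\overline{C}$ intertwines with the $[-1]$-action on $A$; hence $\overline{C} \to A$ descends to $\mathbb{P}^1 \cong \overline{C}/\iota \to [A/\pm 1]$. The twisted marked points of this orbifold map---those landing at the $16$ orbifold points of $[A/\pm 1]$---are exactly the images of Weierstrass points, and $\bk(v)$ records the number landing over $v \in A[2]$. This recasts $F_{g,\bk}(u)$ as a genus-zero orbifold GW generating function on $[A/\pm 1]$ in the class $c_1(L)^\vee/\pm 1$, with the twisted sector profile dictated by $\bk$.

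Next I would apply CRC to the crepant resolution $\Km(A) \to A/\pm 1$ to transfer the computation to the Kummer K3, where Yau--Zaslow is available. Twisted insertions at $v \in A[2]$ become constraints on intersections with the exceptional $(-2)$-curves $E_v \subset \Km(A)$. The parity of $\bk(v)$ separates two regimes: odd parity (i.e.\ $v \in P$) forces a genuine ramified orbifold contribution, yielding a local factor $A_i(u^4)$; even parity yields an ``untwisted'' constraint contributing $C_i(u^2)$. The distinction between the substitutions $u \mapsto u^2$ and $u \mapsto u^4$ reflects the degree-$2$ cover $A \to A/\pm 1$ combined, in the odd case, with an additional doubling arising from the inertia at a fixed point of $[-1]$. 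The compatibility condition on $P$ of Remark \ref{rmk_condition} should emerge here as the requirement that the parity data match the target homology class so that the relevant orbifold moduli space be nonempty. Yau--Zaslow then supplies a single generating function for rational curves on the Kummer; after extracting one local factor at each $v \in A[2]$, the residual bulk is precisely $E(u)^{\tfrac{1}{2}|P|-2}$.

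Finally, quasi-modularity of $E$, $A_i$, and $C_i$ follows by relating them to MacMahon's generalized sum-of-divisors functions. Expanding $(1-q^m)^{-2} = \sum_{k\geq 1} k\,q^{(k-1)m}$ and reindexing, the multi-sums defining $A_i$ and $C_i$ become finite $\mathbb{Z}$-linear combinations of MacMahon's multi-divisor sums $\sigma^{(i)}(q)$ and their pullbacks under $q \mapsto q^2$, all of which lie in the ring of quasi-modular forms for suitable congruence subgroups of $\mathrm{SL}_2(\mathbb{Z})$; $E(q)$ is essentially the odd-index part of $\sigma_1$, a weight-$2$ quasi-modular form. The main obstacle I anticipate is the precise orbifold-to-Kummer translation under CRC: correctly matching twisted sector insertions at the $16$ orbifold points with intersection profiles along the exceptional divisors $E_v$, performing the parity bookkeeping that isolates $P$, and thereby producing exactly the variable substitutions $u^{2}, u^{4}$ and the bulk exponent $\tfrac{1}{2}|P|-2$ appearing in \eqref{eq_main}.
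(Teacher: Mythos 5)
Your outline follows the same overall route as the paper (orbifold Gromov--Witten theory on $[A/\pm1]$, CRC transfer to $\Km(A)$, Yau--Zaslow, per-point local factors, MacMahon's functions), but it has two genuine gaps. First, the invariants that the CRC hands you are the \emph{non-enumerative} orbifold invariants $GW_{\bk,n}$: the moduli space contains components where teeth collapse to stacky points, and these contribute nonzero multiples of the enumerative invariants (virtual degree $(-\tfrac14)^{\frac12(|\bk|-|\bk_\lambda|)}$, Theorem \ref{thm_virt_degree}). Your proposal passes directly from ``orbifold GW generating function'' to $F_{g,\bk}$, which would produce wrong coefficients; the paper must first convert to the enumerative count via the substitution $x_v = 2\sin(z_v/2)$ (Theorem \ref{thm_substitution}), whose proof occupies much of the appendix, together with the division by $\prod_v \bk(v)!$ in equation \eqref{nkn}. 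Second, ``extracting one local factor at each $v$'' is not automatic: Yau--Zaslow applies per class $\beta_n + w$ with $w$ ranging over the Kummer lattice $K$, so the restricted potential is a theta-like sum over $K$, and the factorization over $v \in A[2]$ requires the extension $0 \to \Lambda \to K \to \Pi_3 \to 0$ (Corollary \ref{cor_kum_lat}) together with the congruence $\beta_n \equiv \tfrac12 H_n - \hat\varepsilon_i \pmod K$ (Proposition \ref{prop_cong}, proved by degenerating to $A \cong S \times F$). That congruence is also the actual source of the condition on $P$, namely $P \equiv \varepsilon_i \pmod{\Pi_3}$; your ``moduli space nonempty'' heuristic points in the right direction, but without the $\varepsilon_0$/$\varepsilon_1$ dichotomy you cannot see that the parity of the polarization determines which $\varepsilon_i$ occurs.

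Relatedly, your explanation of the $u^2$ versus $u^4$ substitutions (degree-$2$ cover plus ``inertia doubling'') is not the actual mechanism: after the CRC change of variables ($y_v \mapsto iz_v$, $q^w \mapsto (-1)^w$, $u \mapsto -u$), the per-point factors are the theta series $\sum_k (-1)^k u^{2k^2+2k} e^{ikz}$ (odd case) and $\sum_k (-1)^k u^{2k^2} e^{ikz}$ (even case), and their identification with $A_k(u^4)$ and $C_k(u^2)$ goes through Chebyshev polynomials and the Andrews--Rose identities (Theorem \ref{thm_andrews_rose}); the bulk factor $E(u)^{\frac12|S|-2}$ then emerges from collecting Pochhammer symbols and applying Legendre's identity, rather than being ``residual Yau--Zaslow.'' Your quasi-modularity sketch likewise presupposes quasi-modularity of MacMahon's functions, which is precisely the cited content of Theorem \ref{thm_qmod}. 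In short, the skeleton matches the paper, but the substantive steps---the $2\sin(z_v/2)$ enumerative correction, the Kummer-lattice congruence with its $\varepsilon_0$/$\varepsilon_1$ bookkeeping, and the theta/Chebyshev identities---are missing, and the first of these omissions would actually change the numbers.
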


We provide for reference a few computations of these series, all pertaining to genus three curves. From the description above, the only functions which will contribute to curves of genus three are
\begin{gather*}
A_2(u^4) \qquad C_2(u^2) \qquad E(u)^2 \qquad A_1(u^4)C_1(u^2)\\
C_1(u^2)^2 \qquad E(u)A_1(u^4) \qquad E(u)C_1(u^2)
\end{gather*}
a few of whose coefficients are given in Table \ref{tbl_coeffs}. In particular, the function $F_3(u)$ counting all curves of genus 3 is given by
\begin{multline*}
F_3(u) = A_2(u^4) + 3 C_2(u^2) + 12A_1(u^4)C_1(u^2) + 21C_1(u^2)^2 \\+ 10 E(u)C_1(u^2) + 6 E(u) A_1(u^4) + 3 E(u)^2
\end{multline*}
which is also included in Table \ref{tbl_coeffs}.

\begin{table}[ht]
\begin{tabular}{c | c c c c c c c c c c c c c c c c}
& $q^2$ & $q^3$ & $q^4$ & $q^5$ & $q^6$ & $q^7$ & $q^8$ & $q^9$ & $q^{10}$ & $q^{11}$ & $q^{12}$  \\ \hline\hline
$E(u)^2$ & 1 & & 8 & & 28 & & 64 & & 126 & & 224 \\
$E(u)C_1(u^2)$ & & 1 & & 6 & & 18 & & 40 & & 75 & \\
$C_1(u^2)^2$ & & & 1 & & 4 & & 12 & & 24 & & 44 \\
$E(u)A_1(u^4)$ & & & & 1 & & 4 & & 9 & & 20 & \\
$A_1(u^4)C_1(u^2)$ & & & & & 1 & & 2 & & 7 & & 10 \\
$C_2(u^2)$ & & & & & & &  1 & & 2 & & 4 \\
$A_2(u^4)$ & & & & & & & & & & & 1 \\ \hline
$F_3(u)$ & 3 & 10 & 45 & 66 & 180 & 204 & 471 & 454 & 972 & 870 & 1729 
\end{tabular}
\caption[Coefficients for genus 3 curves]{Some coefficients of the generating functions for genus 3 curves.}\label{tbl_coeffs}
\end{table}

The structure of the paper will be as follows. In Section \ref{chap_preliminaries}, we review some preliminary material regarding the Kummer surface of an Abelian surface and in particular the Kummer lattice $K \subset H_2\big(\Km(A)\big)$. We also discuss the basic construction in orbifold Gromov-Witten theory which allows us to study hyperelliptic curves with genus 0 invariants. We provide a partial description of the relevant moduli space in the case that the Picard number of $A$ is 1, and we explain how to obtain enumerative invariants from decidedly non-enumerative ones. 

Section \ref{chap_main} consists of a proof of Theorem \ref{thm_main} obtained by computing a restricted form of the Gromov-Witten potential (see Definition \ref{def_restricted_potential}), followed by applying the crepant resolution conjecture to obtain the corresponding potential function on $[A/\pm1]$. Lastly, we simplify this by accounting for collapsing components to prove Theorem \ref{thm_main}.

Section \ref{sec_low_genera} consists of a proof of the genus one and two case {\em independent of the crepant resolution conjecture}. This involves specializing to the case that $A \cong S \times F$ for generic elliptic curves $S$ and $F$. From there the problem is reduced to counting covers of an elliptic curve, which is classically known.

Lastly, Appendix \ref{appendix_mod} consists of an analysis of the moduli space of genus 0 twisted stable maps into $[A/\pm1]$, as well as a discussion of its reduced virtual fundamental class.


\section{Preliminaries}\label{chap_preliminaries}

\subsection{Abelian surfaces and Kummer surfaces}\label{sec_ab_kum}

For the duration of this paper, unless otherwise noted, all coefficients are integral. The majority of the results in this section follows \cite{bpvdv}, and as such, proofs are omitted.

Let $A$ be an Abelian surface. Then $A$ is a complex torus $\C^2/\Gamma$ with $\Gamma$ of rank 4. As an Abelian group, $A$ has an involution given by multiplication by $\pm1$. This has as fixed points the collection $A[2] \cong \Gamma/2\Gamma$ of sixteen 2-torsion points. The quotient by this action has these as its only singularities, and so by blowing them up we obtain the (crepant) resolution $\Km(A)$ which is a smooth $K3$ surface called the {\em Kummer surface}\index{Kummer surface} of $A$.

If instead we blow up $A$ at the sixteen 2-torsion points to produce $\overline{A}$, we can take the quotient of this space by the lifted involution to obtain the diagram
\begin{equation}\label{kummer_surface_structure}
\xymatrix{
\overline{A}\ar[rr]^\sigma \ar[d]_s & & \Km(A) \ar[d]^p \\
A \ar[rr]_\pi & & A/\pm1
}.
\end{equation}

There are a few facts that this yields, all of which are connected to the affine $\F_2$-geometry of $A[2]$. We begin by introducing some notation.

\begin{enumerate}
\item Let $E_v \in H_2\big(\Km(A)\big)$ denote the class of the $(-2)$-curve lying over a given $v \in A[2]$.
\item Let $\Lambda$ denote the sublattice of $H_2\big(\Km(A)\big)$ generated by the classes $E_v$.
\item Let $K$ denote the minimal primitive sublattice of $H_2\big(\Km(A)\big)$ which contains $\Lambda$. This is called the {\em Kummer lattice}\index{Kummer lattice}.
\end{enumerate}


Furthermore, let $\mathfrak{P}(A[2])$ denote the power set of $A[2]$ (which is a group under the operation $S + S' = S \cup S' \setminus (S \cap S')$) and let $\Pi_k$ for $0 \leq k \leq 4$ denote the subgroups generated by all of the affine $k$-planes in $A[2]$. Then we have that
\[
\Z/2 = \Pi_4 \subset \Pi_3 \subset \Pi_2 \subset \Pi_1 \subset \Pi_0 = \mathfrak{P}(A[2]).
\]

\begin{remark}\label{rmk_correspondence_subset}
We will use throughout this paper the letters $\eta$ and $\varepsilon$ (possibly with subscripts) to denote elements of $\Pi_k$. Note also that for each element $\eta \in \Pi_k$ we can think of $\eta$ as an element of $\tfrac{1}{2}\Lambda$ via the correspondence
\[
\eta \leftrightarrow \hat\eta = \sum_{v\in \eta} \tfrac{1}{2}E_v.
\]
We will also throughout use the notation $|\eta|$ to denote the number of elements in $\eta$.
\end{remark}

\begin{remark}
We should note that there are two notions of summation at play here---summation in $\mathfrak{P}(A[2])$, and summation in $\tfrac{1}{2}\Lambda$. When we write $\eta_1 + \eta_2$ we will always mean the former, and when we write $\hat\eta_1 + \hat\eta_2$ we will always mean the latter, so no confusion should arise.
\end{remark}

As $\Km(A)$ is a smooth real 4-manifold, the group $H_2\big(\Km(A)\big)$ comes with a natural intersection form $\langle\, , \rangle$ which turns it into a unimodular lattice. When restricted to $\Lambda$ this is $(-2)Id$, and so we have that $\Lambda^\vee= \tfrac{1}{2}\Lambda$. Thus we have that
\[
\Lambda \subset K \subset K^\vee \subset \tfrac{1}{2}\Lambda
\]
and so every $w \in K$ (and in $K^\vee$) can be written as
\[
w = \sum_{v \in A[2]}\frac{a_v}{2}E_v.
\]
From this we have a natural map $r : \tfrac{1}{2}\Lambda \to \Pi_0$ given by
\[
\sum_{v \in A[2]} \frac{a_v}{2}E_v \mapsto \big\{v \in A[2] \mid a_v \equiv 1 \pmod 2\big\}.
\]
Note that for $w = \sum_{v \in A[2]}\frac{a_v}{2}E_v$, that $\hat{r(w)}$ is nothing but the reduction of the coefficients $a_v$ of $w$ mod 2.


\begin{remark}\label{rmk_int_form}
It follows further that the intersection form on $K$ can be extended linearly to an intersection form on $\tfrac{1}{2}\Lambda$ which we also denote by $\langle\, , \rangle$. In particular, using the correspondence of Remark \ref{rmk_correspondence_subset}, we can define for any two subsets $\eta_1, \eta_2$ of $A[2]$ the pairing $\langle \hat\eta_1, \hat\eta_2 \rangle$ by
\[
\langle \hat\eta_1, \hat \eta_2 \rangle = \Big\langle \sum_{v \in \eta_1} \frac{1}{2}E_v , \sum_{v \in \eta_2} \frac{1}{2}E_v\Big\rangle = -\tfrac{1}{2}|\eta_1 \cap \eta_2|.
\]
\end{remark}

The condition that $w \in K$ can now be stated as the following.

\begin{proposition}[{\cite[Proposition 5.5, Chapter VIII]{bpvdv}}]
An element $w = \sum_{v\in A[2]} \frac{a_v}{2}E_v \in \tfrac{1}{2}\Lambda$ is in $K$ if and only if $r(w) \in \Pi_3$. 
That is, $w \in K$ if and only if the collection of those $v \in A[2]$ such that $a_v$ is odd is either
\begin{enumerate}
\item empty
\item an affine 3-plane
\item all of $A[2]$.
\end{enumerate}
\end{proposition}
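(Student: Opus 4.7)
The plan is to realize $K/\Lambda$ through the injection $\bar r\colon K/\Lambda \hookrightarrow \mathfrak{P}(A[2])$ induced by $r$ (well-defined because any element of $\Lambda$ has even coefficients in the basis $\{\tfrac12 E_v\}$, so $\Lambda \subseteq \ker r$) and show that its image equals $\Pi_3$. The problem then splits into two inclusions.

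For $\Pi_3 \subseteq \operatorname{Im}(\bar r)$: note that every affine $3$-plane in $A[2]\cong\F_2^4$ is either a linear $3$-plane $V$ or its complement $V^c$, since a codimension-one linear subspace of $\F_2^4$ has exactly two cosets. Consequently $\Pi_3$ is generated over $\F_2$ by the fifteen linear $3$-planes together with the class $A[2]$, and it suffices to produce integral representatives for these. For $\eta = A[2]$ the divisor $\sum_v E_v$ is the branch locus of $\sigma\colon\overline A \to \Km(A)$, hence $2$-divisible in $\operatorname{Pic}(\Km(A))$, giving $\tfrac12\sum_v E_v \in K$. For a linear $3$-plane $V \subset A[2]$ (equivalently, an index-$2$ subgroup), the Weil pairing produces an étale double cover $\widetilde A \to A$ whose kernel is a single $2$-torsion point; a direct computation shows the $2$-torsion of $A$ that lifts to $2$-torsion of $\widetilde A$ is exactly $V$, and pushing through the diagram \eqref{kummer_surface_structure} applied to $\widetilde A$ yields an integral class on $\Km(A)$ with $\bar r$-image $V$. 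The complement then follows from $\widehat{V^c} \equiv \hat V + \widehat{A[2]} \pmod \Lambda$.

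For the reverse inclusion $\operatorname{Im}(\bar r) \subseteq \Pi_3$: I would appeal to a cardinality argument. Direct enumeration gives $|\Pi_3| = 2 + 30 = 32$ (the empty set, $A[2]$, and the $30$ affine $3$-planes; closure under symmetric difference is a short check). On the other hand, $[K:\Lambda]=2^5$ follows from lattice theory: $\det\Lambda = 2^{16}$, $K$ sits primitively inside the unimodular $K3$ lattice $H_2(\Km(A);\Z)$, and the rank-$6$ orthogonal complement of $K$ has discriminant $2^6$, forcing the index via $\det\Lambda = [K:\Lambda]^2\cdot\det K$. Equality of the two cardinalities, combined with the first inclusion, forces $\operatorname{Im}(\bar r)=\Pi_3$.

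The main obstacle is the construction of the integral classes for linear $3$-planes via the étale cover $\widetilde A \to A$; the $\eta = A[2]$ case is a one-line branch-divisor argument, and the reverse inclusion reduces to a lattice index calculation. As an alternative to the index count, one could pair a candidate $w \in K$ against an explicit family of integral classes coming from symmetric theta divisors on $A$ (whose multiplicities at $2$-torsion points define an Arf-like quadratic form on $A[2]$) and read off the constraint $r(w) \in \Pi_3$ directly; this route avoids invoking the global structure of the $K3$ lattice but demands more delicate bookkeeping with theta characteristics.
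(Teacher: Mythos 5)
The paper offers no proof of this statement at all: it is quoted from \cite{bpvdv} (``proofs are omitted''), so your attempt can only be measured against the classical argument of Nikulin/BPV. Your ``if'' half is essentially that classical argument, and it is sound. The reduction of affine $3$-planes to linear ones plus $A[2]$, the count $|\Pi_3|=2+30=32$, and the branch-divisor argument for $\widehat{A[2]}=\tfrac12\sum_v E_v$ are all correct (note $K$ is the primitive closure of $\Lambda$, so any element of $\tfrac12\Lambda$ that is integral automatically lies in $K$). The step you flag as the main obstacle is in fact fine and can be made concrete: if $\widetilde A\to A$ is the \'etale double cover attached to the hyperplane $V=\ker\phi$, with kernel generated by $\tau\in\widetilde A[2]$, then a lift $\tilde a$ of $a\in A[2]$ satisfies $2\tilde a\in\{0,\tau\}$, with $2\tilde a=0$ exactly when $a\in V$; the involution $-1$ on $\widetilde A$ commutes with the deck translation $\tau$, and $\widetilde A/\langle-1\rangle\to A/\pm1$ is a double cover branched precisely at the eight nodes corresponding to $A[2]\setminus V$ (the images of the sixteen solutions of $2x=\tau$). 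Passing to the resolution as in \eqref{kummer_surface_structure} gives $2$-divisibility of $\sum_{v\in V^c}E_v$, hence $\widehat{V^c}\in K$, and your relation $\widehat{V^c}\equiv\hat V+\widehat{A[2]}\pmod\Lambda$ finishes the inclusion $\Pi_3\subseteq\operatorname{Im}(\bar r)$.

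The genuine gap is in the reverse inclusion. The identities $\det\Lambda=[K:\Lambda]^2\det K$ and $|\det K|=|\det K^\perp|$ are fine, but the input ``$K^\perp$ has discriminant $2^6$'' is exactly the assertion that $\alpha(H_2(A))\cong U(2)^3$ is \emph{primitive} in $H_2\big(\Km(A)\big)$ and exhausts $K^\perp$. A priori $K^\perp$ could be a proper even overlattice of $\alpha(H_2(A))$ of index $m$ (evenness is no obstruction: the discriminant form of $U(2)^3$ has nonzero isotropic elements, so even overlattices exist abstractly), which would give $[K:\Lambda]=2^5m$ and destroy the count. In the standard development this primitivity and the isomorphism $K^\perp\cong U(2)^3$ are established \emph{after} the present proposition, via discriminant-form gluing, so invoking it here risks circularity unless you supply an independent argument (e.g.\ pulling back along $\sigma^*$ to $H^2(\overline A)=s^*H^2(A)\oplus\bigoplus_v\Z e_v$ and showing that $x\perp\Lambda$ with $\sigma^*x=s^*b$ forces $b$ even). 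A self-contained repair, which is also closer to the classical ``only if'' and to your own theta-divisor alternative, is to pair a candidate $w=\sum_v\tfrac{a_v}{2}E_v\in K$ against the integral classes $\beta_\varepsilon=\tfrac12\alpha([V])-\hat\varepsilon$ of Proposition \ref{prop_congruence}, applied to half-period translates of subtori so as to realize every affine $2$-plane $\varepsilon\subset A[2]$. Since $w\perp\alpha(H_2(A))$, integrality of $\langle w,\beta_\varepsilon\rangle$ forces $|r(w)\cap\varepsilon|$ to be even for every affine $2$-plane $\varepsilon$, and the subsets of $\F_2^4$ meeting every affine $2$-plane evenly are precisely the level sets of affine-linear functionals (a Reed--Muller-type duality), i.e.\ exactly $\emptyset$, $A[2]$, and the affine hyperplanes. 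This yields $\operatorname{Im}(\bar r)\subseteq\Pi_3$ directly, with no appeal to the global structure of the $K3$ lattice.
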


This then yields a description of the Kummer lattice as follows.

\begin{corollary}\label{cor_kum_lat}
There is a short exact sequence
\[
\xymatrix{
0 \ar[r] & \Lambda \ar[r] &K \ar[r]^r & \Pi_3 \ar[r] & 0
}.
\]
\end{corollary}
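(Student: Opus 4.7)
The plan is to read off the short exact sequence directly from the preceding Proposition 5.5, which already characterises membership in $K$ in terms of the reduction map $r$. Indeed, that proposition says $w \in K$ if and only if $r(w) \in \Pi_3$, so the restriction of $r$ gives a well-defined homomorphism $r : K \to \Pi_3$. The corollary then reduces to checking (i) that $\Lambda$ is exactly the kernel, and (ii) that this restricted map is surjective.

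For the kernel, I would write $w = \sum_{v \in A[2]} \tfrac{a_v}{2} E_v \in K$. By the definition of $r$ in Remark \ref{rmk_correspondence_subset}, $r(w) = \emptyset$ if and only if every $a_v$ is even, which is precisely the condition that $w \in \Lambda$. So $\ker(r|_K) = \Lambda$, and of course $\Lambda \subseteq K$ by the definition of $K$ as a primitive overlattice of $\Lambda$.

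For surjectivity, I would exhibit an explicit preimage for each element of $\Pi_3$. Given $\eta \in \Pi_3$, so $\eta$ is either empty, an affine $3$-plane, or all of $A[2]$, form the element $\hat\eta = \sum_{v \in \eta} \tfrac{1}{2} E_v \in \tfrac{1}{2}\Lambda$. Then $r(\hat\eta) = \eta$ tautologically, and $\hat\eta$ lies in $K$ by the preceding proposition since $r(\hat\eta) = \eta \in \Pi_3$. Hence $r : K \to \Pi_3$ hits every generator, so is surjective.

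Putting the three pieces together yields the exact sequence
\[
0 \to \Lambda \to K \xrightarrow{\ r\ } \Pi_3 \to 0.
\]
There is really no obstacle: all the content is packaged into Proposition 5.5, and this corollary is merely a repackaging of it into sequence form. The only step requiring any thought is choosing explicit lifts $\hat\eta$ for surjectivity, and these are provided by the correspondence of Remark \ref{rmk_correspondence_subset}.
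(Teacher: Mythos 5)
Your proof is correct and matches the paper's intended argument: the paper states this corollary as an immediate consequence of the preceding proposition (its proof is omitted, following \cite{bpvdv}), and your repackaging---kernel $=\Lambda$ via parity of the coefficients $a_v$, surjectivity via the explicit lifts $\hat\eta$ of Remark \ref{rmk_correspondence_subset}---is exactly the reading the paper has in mind. The only point worth making explicit is that $r:\tfrac{1}{2}\Lambda\to\Pi_0$ is a group homomorphism (reduction of coefficients mod $2$, with the symmetric-difference group law), which you use implicitly throughout.
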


Following prior remarks, this permits us to consider each element $\eta \in \Pi_3$ as an element $\hat\eta \in K$.

We next describe the relationship between $H_2(A)$ and $H_2\big(\Km(A)\big)$. Consider again the diagram \eqref{kummer_surface_structure}, and in particular, consider the map
\[
\alpha = \sigma_* \circ s^! : H_2(A) \to H_2\big(\Km(A)\big).
\]
We have the following proposition which relates the two intersection forms.

\begin{proposition}[{\cite[Proposition 5.1, Chapter VIII]{bpvdv}}]
The map $\alpha$ multiplies the intersection form by 2. That is, $\alpha(a) \cdot \alpha(b) = 2 a \cdot b$ for every $a, b \in H_2(A)$. Moreover, for each $v \in A[2]$ and for each class $a \in H_2(A)$, $E_v \cdot \alpha(a) = 0$.
\end{proposition}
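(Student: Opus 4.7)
The plan is to exploit the projection formula across the diagram \eqref{kummer_surface_structure}, using the two key facts that (i) the involution $[-1]$ acts trivially on $H_2(A)$, and (ii) $\sigma$ is a degree-two cover branched precisely along the exceptional curves.

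First I would reduce both intersection computations to intersections on $\overline{A}$. Since $s:\overline{A}\to A$ is the blow-up of the sixteen $2$-torsion points, its Gysin map $s^!$ corresponds under Poincar\'e duality to the cohomological pullback $s^*$, so $s^! a \cdot s^! b = a\cdot b$ for all $a,b\in H_2(A)$, and moreover $s^!a \cdot \tilde E_v = 0$, where $\tilde E_v\subset\overline{A}$ denotes the exceptional curve over $v\in A[2]$. Next, because the lifted involution $\bar\iota$ on $\overline{A}$ fixes each $\tilde E_v$ pointwise and the quotient by $\bar\iota$ is $\sigma:\overline{A}\to \Km(A)$, the map $\sigma$ is a double cover branched along $\bigcup_v E_v$, so $\sigma^* E_v = 2\tilde E_v$ as divisor classes.

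The key intermediate computation I would perform is $\sigma^*\alpha(a) = 2\, s^! a$ for $a\in H_2(A)$. By the pushforward-pullback identity for finite quotients, $\sigma^*\sigma_* \bar a = \bar a + \bar\iota^*\bar a$ for any $\bar a \in H_2(\overline{A})$. Taking $\bar a = s^! a$, since $[-1]^*$ acts as $(-1)^k$ on $H_k(A)$ we get $[-1]^* a = a$ on $H_2(A)$, and naturality of $s^!$ gives $\bar\iota^* s^! a = s^!([-1]^*a) = s^!a$. Hence $\sigma^*\alpha(a) = 2\, s^!a$.

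Once this is in hand, both claims follow by the projection formula applied to the degree-two map $\sigma$. For the intersection form,
\[
2\,\alpha(a)\cdot\alpha(b) \;=\; \int_{\overline{A}} \sigma^*\alpha(a)\cup \sigma^*\alpha(b) \;=\; \int_{\overline{A}} 2s^!a \cup 2s^!b \;=\; 4(a\cdot b),
\]
giving $\alpha(a)\cdot\alpha(b) = 2\,a\cdot b$. Similarly,
\[
2\, E_v\cdot\alpha(a) \;=\; \int_{\overline{A}} \sigma^*E_v\cup \sigma^*\alpha(a) \;=\; \int_{\overline{A}} 2\tilde E_v \cup 2 s^!a \;=\; 4(\tilde E_v\cdot s^!a) \;=\; 0.
\]
The only genuine subtlety I anticipate is bookkeeping the two different wrong-way maps ($s^!$ a Gysin pullback across a blow-up versus $\sigma_*$ a pushforward across a ramified cover); the rest is a mechanical application of the projection formula once the identity $\sigma^*\alpha(a)=2\,s^!a$ is established.
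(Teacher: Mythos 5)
Your proof is correct. Note, though, that the paper itself gives no argument here: Section \ref{sec_ab_kum} states explicitly that the results of that section follow \cite{bpvdv} and that proofs are omitted, so the proposition is cited as \cite[Proposition 5.1, Chapter VIII]{bpvdv} rather than proved. Your write-up supplies essentially the standard argument from that reference: the identity $\sigma^*\sigma_* = 1 + \bar\iota^*$ for the degree-two quotient, combined with $\bar\iota^* s^! a = s^!([-1]^*a) = s^! a$ (since $[-1]^*$ acts as $(+1)$ on $H_2(A) \cong \bigwedge^2 H_1(A)$), gives $\sigma^*\alpha(a) = 2\,s^! a$, and then the projection formula for the degree-two map $\sigma$ yields both claims, using $\sigma^* E_v = 2\tilde E_v$ along the ramification divisor and $s^! a \cdot \tilde E_v = 0$ for the blow-up. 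All of these steps are sound; the only cosmetic points worth tightening are that $\bar\iota^*$ acting on $H_2$ should strictly be $\bar\iota_*$ (harmless, as $\bar\iota$ is an involutive diffeomorphism, so the two agree under Poincar\'e duality), and that the formula $\sigma^*\sigma_* = 1+\bar\iota^*$ is being applied to classes supported on the ramification locus as well as off it, which is legitimate here (indeed $\sigma^*\sigma_*\tilde E_v = \sigma^* E_v = 2\tilde E_v = \tilde E_v + \bar\iota^*\tilde E_v$ is consistent) but deserves a word since intersection numbers, not integral cycle identities, are all that is actually needed.
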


It follows from this proposition that the map $\alpha$ embeds $H_2(A)$ as a sublattice of $H_2\big(\Km(A)\big)$ which is orthogonal to the Kummer lattice $K$.

We examine now one of the important properties of this map. Recall that $A = \C^2/\Gamma$, let $u : \C^2 \to A$ be the quotient map, and let $\lambda_1, \lambda_2$ be basis elements of $\Gamma$. Define $V = u\big(\langle \lambda_1, \lambda_2\rangle \otimes \R\big)$ (the image of the 2-plane spanned by $\lambda_1, \lambda_2$), and let $C = [V]$. Then $\alpha(C)$ is the class of the proper transform of $\pi(V + t)$ in $\Km(A)$ for some generic $t \in A$ (i.e. such that $V + t$ does not intersect $A[2]$).

Next, consider the rational curve $V/\pm1 \subset A/\pm1$, and let $\beta \in H_2\big(\Km(A)\big)$ be the class of the proper transform of $V/\pm1$. Let $\varepsilon$ denote the collection of 2-torsion points in $V$. We have the following relation between the classes $\alpha(C)$ and $\beta$.

\begin{proposition}\label{prop_congruence}
Let $V$, $C$, and $\beta$ be as above. The classes $\alpha(C)$ and $\beta$ satisfy the relationship
\begin{align*}
\beta &= \frac{1}{2}\alpha(C) - \hat\varepsilon \\
 &= \frac{1}{2}\alpha(C) - \frac{1}{2}\sum_{v \in \varepsilon} E_v
\end{align*}
\end{proposition}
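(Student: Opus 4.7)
I plan to establish the identity by pulling $\alpha(C)$ back to the blowup $\overline{A}$ and decomposing it into the contribution coming from $\tilde V$, the proper transform of $V$, and those coming from the exceptional divisors $E'_v \subset \overline A$ over the points of $\varepsilon$.  The first step is to show that
\[
s^![V] \;=\; [\tilde V] + \sum_{v\in\varepsilon}[E'_v] \quad \text{in } H_2(\overline A),
\]
using that $V$ meets each blowup centre $v \in \varepsilon$ transversally and is disjoint from $A[2]\setminus\varepsilon$. One way to see this is to move $V$ to a generic translate $V+t$ disjoint from $A[2]$, so that $s^![V] = [s^{-1}(V+t)]$ by rational equivalence; locally near each $v$, an equation of the form $xy' = s$ cutting out $s^{-1}(V+t)$ in a chart of $\overline A$ specializes at $s=0$ to $xy' = 0$, exhibiting the flat limit as $\tilde V + E'_v$ with each component occurring with multiplicity one.

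Next I would compute the pushforward under $\sigma$. The lifted involution $\iota$ on $\overline A$ preserves $\tilde V$ setwise, since $-V=V$, and acts freely on $\tilde V$ away from the four fixed points $\tilde V \cap E'_v$, $v\in\varepsilon$; these are fixed because $\iota$ acts trivially on each $E'_v$ (the action of $-\mathrm{id}$ on $T_v A$ becomes the identity on the projectivization $E'_v \cong \PP(T_vA)$).  Consequently $\sigma|_{\tilde V}$ realises $\tilde V \to \widetilde{V/\pm 1}$ as a generically $2$-to-$1$ cover, whose target represents the class $\beta$, so $\sigma_*[\tilde V] = 2\beta$.  The same triviality of $\iota$ on each $E'_v$ shows that $\sigma|_{E'_v}$ is an isomorphism onto $E_v$, and hence $\sigma_*[E'_v] = [E_v]$.

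Combining these computations yields
\[
\alpha(C) \;=\; \sigma_* s^![V] \;=\; 2\beta + \sum_{v\in\varepsilon}[E_v] \;=\; 2\beta + 2\hat\varepsilon,
\]
which rearranges to the stated identity $\beta = \tfrac12\alpha(C) - \hat\varepsilon$.  The main technical subtlety is really the first displayed formula for $s^![V]$: a naive reading might suggest that only the proper transform appears, but once the transversality of $V$ at each $v\in\varepsilon$ is handled correctly (via the moving-to-transverse argument above), the exceptional contributions come out with multiplicity exactly one, and the rest of the proof is a clean invariance/quotient computation.
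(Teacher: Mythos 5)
Your proof is correct, but it takes a genuinely different route from the paper's. The paper works entirely downstairs in $\Km(A)$: since $\alpha(C)$ is the class of the proper transform of a \emph{generic} translate of $V$, the difference $\beta - \tfrac{1}{2}\alpha(C)$ must be of the form $-\tfrac{1}{2}\sum_{v} a_v E_v$, and the coefficients are then determined purely by intersection numbers --- using $\alpha(C)\cdot E_v = 0$ and $E_v^2 = -2$, the ansatz gives $a_v = \beta\cdot E_v$, which equals $1$ exactly when the proper transform of $V/\pm1$ meets $E_v$ transversally in one point, i.e.\ when $v\in\varepsilon$. You instead compute $\alpha(C) = \sigma_*s^![V]$ directly upstairs on $\overline{A}$: the total-transform decomposition $s^![V] = [\tilde{V}] + \sum_{v\in\varepsilon}[E'_v]$ with multiplicity one (correct, since $V$ is smooth at each $v\in\varepsilon$ and misses $A[2]\setminus\varepsilon$), combined with the pushforward degrees $\sigma_*[\tilde{V}] = 2\beta$ (the lifted involution is nontrivial on $\tilde{V}$, with its four fixed points at $\tilde{V}\cap E'_v$) and $\sigma_*[E'_v] = [E_v]$ (the involution fixes $E'_v$ pointwise because $-\mathrm{id}$ projectivizes to the identity on $\PP(T_vA)$, so $\sigma$ restricts to an isomorphism there). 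Your approach buys self-containedness and transparency: it needs neither the orthogonality $\alpha(C)\cdot E_v=0$ nor the ansatz that the difference lies in the span of the exceptional classes, and it explains structurally why $\tilde{V}$ contributes with weight $2$ while the exceptional curves contribute with weight $1$ (the cover $\sigma$ has degree $2$ off its branch locus and degree $1$ along it). The paper's argument buys brevity: given the facts already established in Section 2.1, three intersection numbers pin down all sixteen coefficients at once, with no cycle-level bookkeeping such as your flat-limit verification of the total transform (where, as a cosmetic point only, you reuse the letter $s$ for both the blowdown map and the deformation parameter).
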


\begin{proof}
As $\alpha(C)$ is the class of the proper transform in $\Km(A)$ of a generic translate of $V$, we see that in $\Km(A)$ we must have that
\[
\beta = \frac{1}{2}\alpha(C) - \frac{1}{2}\sum_{v \in A[2]} a_vE_v
\]
for some integers $a_v$. Since $\beta \cdot E_v = a_v$,  we see that $a_v = 1$ if $v \in \varepsilon$ and is zero otherwise, as claimed.
\end{proof}

Recall that there is a canonical isomorphism $H^2(A) \cong \bigwedge^2 H_1(A)^\vee$, and so we can regard elements of $H^2(A)$ as alternating forms on $H_1(A)$. Suppose now that $(A_n,L_n)$ is a polarized abelian surface with polarization of type $(1, n)$.  That is, there is a basis $e_1, f_1, e_2, f_2$ of $H_1(A_n)$ so that $c_1(L_n)$ (when viewed as an alternating form) can be written as
\begin{equation}\label{eq_polarized_basis}
\begin{pmatrix}
0 & 0 & n & 0 \\
0 & 0& 0 & 1\\
-n & 0 & 0 & 0 \\
0 & -1 & 0 & 0
\end{pmatrix}
\end{equation}
In this basis, we can write $c_1(L_n)^\vee = (e_1 \wedge f_1)  + n (e_2 \wedge f_2)$.

\subsection{Gromov-Witten theory of hyperelliptic curves}\label{sec_gw_hyper}

We aim to compute the number of hyperelliptic curves in $A$ via orbifold Gromov-Witten theory following the ideas of \cite{wise, graber,gillam}. Let $\mathscr{X}$ be a smooth Deligne-Mumford stack with projective coarse moduli space $X$. We will use the notation
\[
\moda[\mathscr{X}]{2g+2}{\beta}
\]
to denote the moduli stack of twisted stable maps of genus 0 curves into the stack $\mathscr{X}$ in the curve class $\beta \in H_2(X)$ with $(2g+2)$ $\Z/2$-stacky points. In the case where $A$ is a polarized Abelian surface with polarization of type $(1, n)$, we will use the notation
\[
\moda{2g+2}{n}
\]
to denote the moduli stack where the class $\beta$ is the class $\tfrac{1}{2}\pi_*c_1(L_n)^\vee$.


As with ordinary Gromov-Witten theory, there are evaluation maps from the moduli stack of twisted stable maps. However, they do not lie in $\mathscr{X}$, but in its {\em rigidified inertia stack}, $I\mathscr{X}$ (see \cite{agv, agv_long}). In the case that $\mathscr{X} = [X/G]$ is a global quotient, this has a particularly simple description.

\begin{definition}\label{def_inertia_stack}
Let $\mathscr{X} = [X/G]$ be a global quotient stack. We define the {\em rigidified inertia stack} to be
\[
I\mathscr{X} = \coprod_{(g) \subset G} [X^g/H_g]
\]
where the disjoint union is taken over all conjugacy classes $(g) \subset G$, where $X^g$ is the fixed-point set of $g$, and where $H_g = C(g)/\langle g\rangle$ is the quotient of the centralizer of $g$ in $G$ by the subgroup generated by $g$. The component corresponding to $(e) \subset G$ is called the {\em untwisted sector}, while all others are called {\em twisted sectors}.
\end{definition}

For the case of the the quotient stack $[A/\pm1]$, it is easy to see that 
\[
I[A/\pm1] = A \amalg A[2]
\]
and so the twisted sector is identified with $A[2]$, the set of 2-torsion points of $A$.


As before, let $(A,L)$ be a polarized Abelian surface. We have a map
\[
\xymatrix{
A \ar[r] \ar@/_1.5pc/[rr]_{\Delta} & A \times A \ar[r] & Sym^2A
}
\]
where the first map is given by $a \mapsto (a,-a)$. Next, we consider the moduli space
\[
\moda[{[Sym^2A]}]{2g+2}{\tfrac{1}{2}\Delta_*c_1(L)^\vee}
\]
(where the factor of $\tfrac{1}{2}$ comes from the fact that a curve in $A$ is a double cover of the corresponding curve in $Sym^2A$). This parameterizes genus 0, twisted stable maps into the stack $[Sym^2A]$. As these maps are representable, we may complete them to a diagram
\[
\xymatrix{
\tilde{C} \ar[d]\ar[rr] && A \times A \ar[d] \\
C \ar[rr] && [Sym^2A]
}
\]
with $\tilde{C}$ a scheme and the top map equivariant. If $C$ is smooth, then $\tilde{C}$ is a smooth hyperelliptic curve and the projection onto either factor yields a hyperelliptic curve in $A$ in the class $c_1(L)^\vee$. It follows that the moduli space $\moda[{[Sym^2A]}]{2g+2}{\tfrac{1}{2}\Delta_*c_1(L)^\vee}$ is a compactification of the moduli space of smooth hyperelliptic curves in $A$.

So far this follows very closely \cite{wise, gillam}. In our case, we can simplify matters significantly. As above, we look at the diagram
\[
\xymatrix{
A \ar[rr]^{a\mapsto (a, -a)}\ar[d]_\pi && A \times A \ar[d] \\
[A/\pm1] \ar[rr]_\iota && [Sym^2A]
}
\]
where the map $\iota : [A/\pm1] \to [Sym^2A]$ is given by $[a] \mapsto [a,-a]$.

\begin{theorem}
We have the following isomorphism of stacks.
\[
\moda{2g+2}{\beta} \times A \ \cong \ \moda[{[Sym^2A]}]{2g+2}{\iota_*\beta}
\]
\end{theorem}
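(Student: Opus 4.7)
The plan is to exhibit explicit mutually inverse morphisms of stacks, using the embedding $\iota : [A/\pm 1] \to [Sym^2 A]$ together with the addition morphism $\mu : [Sym^2 A] \to A$, $[x,y] \mapsto x+y$, whose zero fiber is precisely the image of $\iota$. For the forward direction, I would send $(f : C \to [A/\pm 1],\, a)$ to $\tau_a \circ \iota \circ f$, where $\tau_a$ denotes the diagonal translation $[x,y] \mapsto [x+a, y+a]$ on $[Sym^2 A]$. Since translations act trivially on $H_2$, the image has class $\iota_*\beta$, and the genus and marked point data are unchanged.

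For the reverse direction, given $F : C \to [Sym^2 A]$, note that $\mu \circ F$ is a morphism from a connected, arithmetic-genus-zero orbifold curve into the abelian variety $A$, and is therefore constant at some $u \in A$. Using surjectivity of $[2] : A \to A$, write $u = 2a$; then $\tau_{-a} \circ F$ has sum identically zero, so its image lies in $\mu^{-1}(0) = \iota([A/\pm 1])$, and it factors uniquely as $\iota \circ f$ for some $f : C \to [A/\pm 1]$, giving the pair $(f, a)$.

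Checking that the two constructions are mutually inverse and functorial in families is largely formal once the assignments are well-defined. The main obstacle is the non-uniqueness of $a$ with $2a = u$: it is determined only modulo $A[2]$, so the inverse construction naively appears $16$-to-$1$. To obtain an honest isomorphism of stacks, one must use the additional rigidification from the $2g+2$ labelled stacky marked points---each maps under $F$ to some $[p_i, p_i] \in [Sym^2 A]$ with $p_i - a \in A[2]$---together with the $\Z/2$-stabilizer structure at these points, to pin down a canonical choice of $a$. Making this choice functorial in families, so as to give a genuine morphism of stacks, is where the real work lies.
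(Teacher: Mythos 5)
Your two constructions are exactly the ones in the paper: the paper's forward map sends $(f,a_0)$ to the composite of $f$ with $a \mapsto (\tfrac{1}{2}a_0 + a,\, \tfrac{1}{2}a_0 - a)$, which is your $\tau_a\circ\iota\circ f$ with the fibre labelled by the sum $a_0=2a$ rather than by $a$, and the paper's inverse is your sum-map argument verbatim (compose with $+:Sym^2A\to A$, invoke the absence of rational curves in an abelian surface to conclude constancy, and factor through a fibre). The one place you part ways is that the paper never engages with the ambiguity you flag: its expression $\tfrac{1}{2}a_0$ is well defined only up to $A[2]$, and its closing assertion that the fibres of $Sym^2A\to A$ ``are all isomorphic to $A/\pm1$'' suppresses the fact that the identification of the fibre over $u$ with $A/\pm1$ depends on a choice of $a$ with $2a=u$, with two choices differing by translation by a $2$-torsion point. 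So your worry is genuine, and you have correctly located the only nontrivial point of the argument.

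However, the repair you sketch would fail. The images $p_i$ of the stacky marked points do satisfy $2p_i = u$, but if you pin $a := p_1$, then the recovered map $f = \iota^{-1}(\tau_{-a}\circ F)$ automatically has its first stacky point mapping to $0\in A[2]$; your inverse would therefore identify the right-hand side with $M_0\times A$, where $M_0$ is the closed locus of maps whose first marked point lies over $0$, not with $\moda{2g+2}{\beta}\times A$ (a count of labellings in the genus $2$ case already shows the mismatch: $5!$ versus $6!$). What actually pins down $a$ is the curve class taken with integral coefficients: the sixteen candidates $\iota^{-1}(\tau_{-a}\circ F)$, as $a$ runs over the square roots of $u$, represent the sixteen classes $(T_t)_*\beta$, $t\in A[2]$, where $T_t$ is translation by $t$ on $[A/\pm1]$. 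These agree rationally but differ by $2$-torsion classes in $H_2(A/\pm1;\Z)$ --- the images of the half-sums $\hat\eta$, $\eta\in\Pi_3$, which constitute the torsion of the singular Kummer surface --- and requiring the class to equal the given $\beta$ singles out exactly one $a$, functorially in families since the class is locally constant. Without this (or some equivalent) input the natural map $(f,a)\mapsto\tau_a\circ\iota\circ f$ with $\beta$ read rationally is $16$-to-$1$, as you observed, so the step you defer as ``the real work'' is indeed a genuine gap --- one that the paper's own proof shares, since it nowhere verifies that the sixteen pullbacks are distinguished.
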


\begin{proof}
The map in one direction is easy to produce. A family of objects in $\moda{2g+2}{\beta}$ over a base scheme $B$ consists of a diagram
\[
\xymatrix{
\tilde{C} \ar[r]\ar[d] & A \ar[d] \\
C \ar[r]\ar[d] & A/\pm1 \\
B \ar@/^1pc/[uu]^{s_i}
}
\]
with $C$ of genus 0 and with $(2g+2)$ sections $s_i$ (thus if $C$ is smooth, $\tilde{C}$ is hyperelliptic of genus $g$). Given an element $a_0$ of $A$ we can construct a map $A \to A \times A$ given by
\[
a \mapsto (\tfrac{1}{2}a_0 + a, \tfrac{1}{2}a_0 - a)
\]
which we then complete to
\[
\xymatrix{
\tilde{C} \ar[r]\ar[d] & A \ar[d] \ar@{.>}[r] & A \times A  \ar[d]\\
C \ar[r]\ar[d] & A/\pm1 \ar@{.>}[r] & Sym^2A\\
B \ar@/^1pc/[uu]^{s_i}
}
\]
which yields the first half.

For the second half, note that there is a map $+ : Sym^2A \to A$ given by $[a,b] \mapsto a + b$. Thus given the diagram
\[
\xymatrix{
\tilde{C} \ar[r]\ar[d] & A \times A \ar[d] \\
C \ar[r]\ar[d] & Sym^2A \ar@{.>}[r]^<<<<<+ & A\\
B \ar@/^1pc/[uu]^{s_i}
}
\]
with $C$ a rational curve. Since there are no rational curves in Abelian surfaces, the composition $C \to Sym^2A \to A$ must be constant, and so the diagram factors through the inclusion of a fibre of the map to $A$. As these are all isomorphic to $A/\pm1$, the claim follows.
\end{proof}

We can interpret this theorem as saying that counting hyperelliptic curves in $A$ is equivalent to counting certain stacky rational curves in the orbifold $[A/\pm1]$. Using the crepant resolution conjecture, this should be the same as counting certain rational curves in the smooth $K3$ surface $\Km(A)$. This has been studied in \cite{yau_zaslow, bryan_leung_k3}.




In \cite{bryan_leung_abelian}, it is shown that the (reduced) Gromov-Witten invariants for an Abelian surface only depend on the divisibility and square of the class $\beta$. This follows because of the fact that the moduli space $\mathcal{A}_{2n}$ of polarized Abelian surfaces whose polarizations have square $2n$ is connected, and from the deformation invariance of Gromov-Witten invariants.

We can use this fact to show that the same holds true in our case; indeed, there is a surjective map from $\mathcal{A}_{2n}$ to the moduli space of singular Kummer surfaces with polarizations with square $2n$ (given by taking the quotient by $\pm1$), and so the same result holds. This justifies our notation of $\moda{2g+2}{n}$ (specifically its lack of dependence on the class $\beta$).

As twisted stable maps are representable, each of the $(2g+2)$ evaluation maps $ev_i : \moda{2g+2}{n} \to I[A/\pm1]$ must all lie in the twisted sector, which is $A[2]$. Let $ev$ denote the map 
\[
ev : \moda{2g+2}{n} \to A[2]^{2g+2}
\]
and let $v_1, \ldots, v_{16}$ denote an arbitrary labeling of $A[2]$.

Given $\bk : A[2] \to \zp$ with $|\bk| = 2g+2$, we define
\[
\moda{\bk}{n} = ev^{-1}(\underbrace{v_1, \ldots, v_1}_{\bk(v_1)}, \ldots, \underbrace{v_{16}, \ldots, v_{16}}_{\bk(16)}).
\]
This is the moduli space of those orbifold maps with $\bk(v)$ stacky points whose image lies on each $v \in A[2]$. This space has a degree 0 reduced virtual fundamental class (see Appendix \ref{appendix_mod}), and so  we define
\[
GW_{\bk,n} = \deg\,\big[\moda{\bk}{n}\big]^{red}.
\]
It should be noted that in the definition of $GW_{\bk,n}$, the specific labelling of the element in $A[2]^{|\bk|}$ is not relevant;  any rearrangement of those terms comes simply from a permutation of the labelling of the marked points, and will yield the same Gromov-Witten invariant.

We would like to use this invariant to count hyperelliptic curves, but it is not enumerative; it includes contributions from collapsing components. In the case that $A$ is suitably generic (i.e. has Picard number 1), we can use this to determine the enumerative count as follows (cf. \cite{wise,graber}).

\begin{definition}\label{def_comb}
An $n$-marked {\em comb curve} is a genus 0 twisted stable map $f : \Sigma \to [A/\pm1]$ with 0 ordinary marked points and $n$ $\Z/2$-marked points such that there is a unique irreducible component $\Sigma_0$ which has nonzero degree; this component is called the {\em handle}. All other components are called {\em teeth}.
\end{definition}

It is clear that the locus of $(2g+2)$-marked comb curves $U_{2g+2,n}$ is a closed substack of $\moda{2g+2}{n}$. Similarly, define $U_{\bk,n} = U_{|\bk|,n} \cap \moda{\bk}{n}$. In the case that the Picard number of $A$ is 1, it turns out that all curves are comb curves, as the following proposition shows.

\begin{proposition}\label{prop_comb_curve_reduction}
If the Picard number of $A$ is 1, then the moduli spaces $U_{2g+2,n}$ and $\moda{2g+2}{n}$ are equal, and similarly for $U_{\bk, n}$.
\end{proposition}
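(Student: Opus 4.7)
The plan is to establish $U_{2g+2,n} = \moda{2g+2}{n}$; the equality $U_{\bk,n} = \moda{\bk}{n}$ would then follow immediately from the definitions, since $\moda{\bk}{n} \subset \moda{2g+2}{n}$ and $U_{\bk,n} = U_{2g+2,n} \cap \moda{\bk}{n}$. The central idea is that, under the Picard rank one hypothesis, the primitive effective class $c_1(L)^\vee$ on $A$ admits no nontrivial decomposition into effective summands, and this forces all but one irreducible component of any twisted stable map to be collapsed.

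Given a genus zero twisted stable map $f : \Sigma \to [A/\pm1]$ in the class $\beta = \tfrac{1}{2}\pi_* c_1(L)^\vee$, I would first invoke representability (exactly as in the proof of the theorem identifying $\moda{2g+2}{\beta} \times A$ with the corresponding moduli space on $[Sym^2 A]$) to produce an equivariant double cover $\tilde\pi : \tilde\Sigma \to \Sigma$ branched precisely over the $\Z/2$-stacky points, together with a $\pm 1$-equivariant lift $\tilde f : \tilde\Sigma \to A$. The identity $\pi \circ \tilde f = f \circ \tilde\pi$, combined with the $\pm 1$-invariance of $c_1(L)^\vee$ and the relation $\pi^*\pi_* = 2$ on invariant classes, then gives $\tilde f_*[\tilde\Sigma] = c_1(L)^\vee$ in $H_2(A;\Z)$.

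The next step is to decompose $\Sigma = \Sigma_1 \cup \cdots \cup \Sigma_\ell$ into irreducible components and set $\tilde\Sigma_j = \tilde\pi^{-1}(\Sigma_j)$, which is either irreducible (covering $\Sigma_j$ two-to-one) or splits as a disjoint union of two copies of $\Sigma_j$. In either case, $f|_{\Sigma_j}$ has positive degree precisely when $\tilde f|_{\tilde\Sigma_j}$ is non-constant. For each non-contracted $j$, the image $C_j = \tilde f(\tilde\Sigma_j) \subset A$ is an irreducible curve and its class is effective; by the Picard rank one hypothesis and the primitivity of $c_1(L)^\vee$, we must have $[C_j] = m_j \, c_1(L)^\vee$ for some $m_j \geq 1$. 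Writing $\tilde f_*[\tilde\Sigma_j] = d_j [C_j]$ with $d_j \geq 1$ the degree of the map onto its image and summing over non-contracted components gives
\[
\Big(\sum_j d_j m_j\Big)\, c_1(L)^\vee \;=\; c_1(L)^\vee,
\]
so $\sum_j d_j m_j = 1$, forcing exactly one non-contracted component with $d_j = m_j = 1$. This is precisely the statement that $\Sigma$ is a comb curve.

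The main obstacle I anticipate is the bookkeeping in the first step: one must verify that the equivariant lift $\tilde f : \tilde\Sigma \to A$ exists globally (not merely component-by-component) and that $\tilde f_*[\tilde\Sigma]$ is honestly equal to $c_1(L)^\vee$ rather than a non-trivial invariant multiple thereof. Once that verification is in place, the numerical identity $\sum_j d_j m_j = 1$ combined with the Picard rank one assumption forces the comb structure essentially automatically.
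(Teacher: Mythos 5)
Your proposal is correct and is essentially the paper's own argument: the paper proves this in two lines by noting that a second component of nonzero degree would decompose the primitive class into effective summands, contradicting Picard number one. Your version simply carries out the same decomposition argument explicitly upstairs on $A$ via the equivariant double cover (with the class verification $\pi^*\pi_* = 2$ on invariant classes correctly handled), arriving at the identity $\sum_j d_j m_j = 1$ that the paper leaves implicit.
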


\begin{proof}
Suppose that there were more than one component with non-zero degree. Since the class in $[A/\pm1]$ is primitive, this would immediately yield that the Picard number is greater than 1.
\end{proof}

\begin{figure}[ht]
\begin{center}
\includegraphics{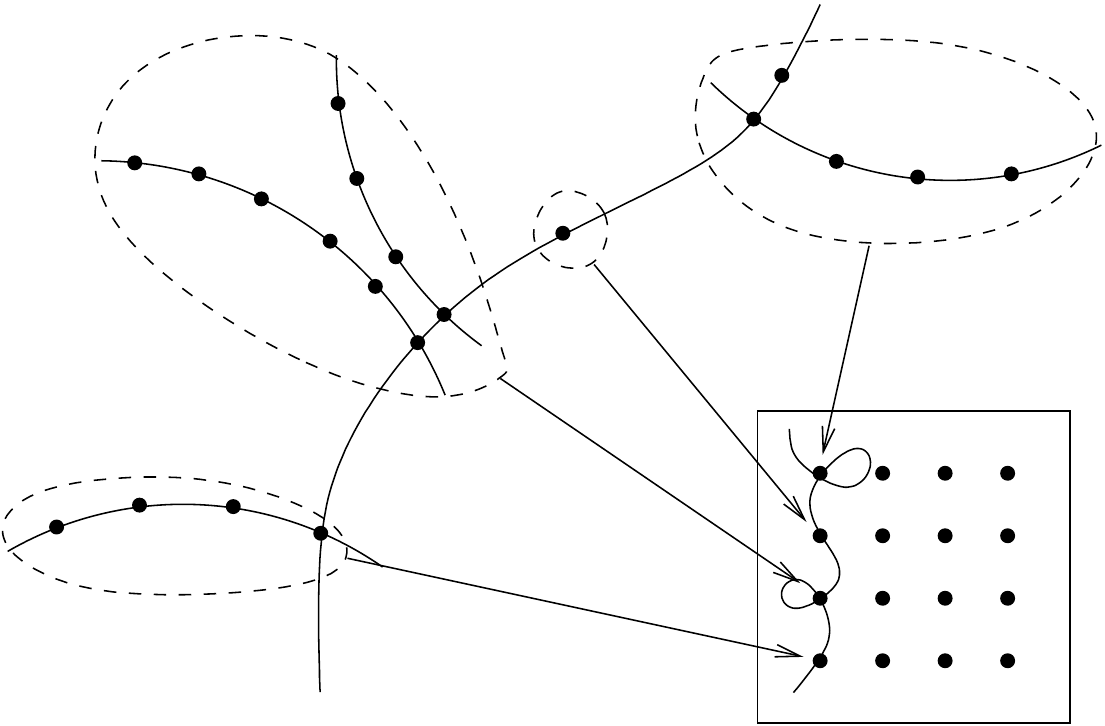}
\end{center}
\caption[A map in a component of $\moda{\bk}{n}$]{A map in the component of $\moda{\bk}{n}$ corresponding to the partitions $\lambda = (\lambda^1, \lambda^2, \lambda^3, \lambda^4) = \big([3],[3\, 5],[1],[1\, 3]\big)$}\label{fig_tw_map}
\end{figure}

As discussed in Appendix \ref{appendix_mod}, this moduli space splits up into components based on how the marked points partition among the teeth (see figure \ref{fig_tw_map}). From Lemma \ref{lem_even_parts}, we see that all partitions with even parts contribute zero to the Gromov-Witten invariant, and so for each comb curve $\Sigma$ and each $v \in A[2]$ we obtain a partition $\lambda^v = (\lambda_1^v, \ldots, \lambda_{r_v}^v)$ of $\bk(v)$ (where $r_v$ denotes the length of the partition $\lambda^v$) into odd parts based on how the marked points are split up among the teeth of $\Sigma$ (with $\lambda_i^v = 1$ being interpreted as there being no collapsing component---that is, it represents a stacky marked point on the handle). Let  $U_{\lambda, n}$ denote the component consisting of those comb curves with partition type $\lambda = (\lambda^v)_{v \in A[2]}$. If we define $\bk_\lambda(v) = r_v$, then since the smoothing of any node is obstructed (see Proposition \ref{prop_disjoint_union}) and since all collapsing components must have image a stacky point in $[A/\pm1]$, it is clear that
\[
U_{\lambda,n} = \modac{\bk_\lambda}{n} \times \prod_{v\in A[2]} \prod_{i=1}^{r_v} \moda[B\Z/2]{\lambda_i^v+1}{0}
\]
where $\modac{\bk}{n}$ denotes the component consisting of those curves with no collapsing components.

Let $p_\lambda$ denote the projection $U_{\lambda,n} \to \modac{\bk_\lambda}{n}$.
We have the following theorem, whose proof we defer to Appendix \ref{appendix_mod}.

\begin{theorem}\label{thm_virt_degree}
Let $\lambda = (\lambda^v)_{v\in A[2]}$ be a collection of partitions of $\bk$, all of which consist of odd parts. Then the virtual degree of $p_\lambda$ is $\big(-\frac{1}{4}\big)^{\frac{1}{2}(|\bk| - |\bk_\lambda|)}$. That is,
\[
(p_\lambda)_*[U_{\lambda,n}]^{red} = \Big(-\frac{1}{4}\Big)^{\frac{1}{2}(|\bk| - |\bk_\lambda|)}\big[\modac{\bk_\lambda}{n}\big]^{red}.
\]
\end{theorem}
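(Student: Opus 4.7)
The plan is to exploit the product decomposition of $U_{\lambda,n}$ given in the excerpt, reduce the computation to a per-tooth local contribution, and then evaluate each local contribution as a hyperelliptic Hodge integral.

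First, since by Proposition \ref{prop_disjoint_union} the nodes attaching each tooth to the handle are obstructed (i.e.\ cannot be smoothed), the reduced perfect obstruction theory on $U_{\lambda,n}$ splits as an orthogonal sum of the pulled-back obstruction theory on $\modac{\bk_\lambda}{n}$ and the individual obstruction theories on the tooth factors $\moda[B\Z/2]{\lambda_i^v+1}{0}$ (each with obstruction theory inherited as a closed substack of twisted stable maps into $[A/\pm1]$). Consequently $[U_{\lambda,n}]^{red}$ is the external product of $[\modac{\bk_\lambda}{n}]^{red}$ with the tooth virtual classes, and pushforward by $p_\lambda$ multiplies $[\modac{\bk_\lambda}{n}]^{red}$ by the product of the degrees of the (zero-dimensional) tooth virtual classes. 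It therefore suffices to prove: for each odd $\lambda \geq 1$, the virtual degree of a single tooth $\moda[B\Z/2]{\lambda+1}{0}$, with obstruction theory inherited from $[A/\pm1]$, equals $\bigl(-\tfrac{1}{4}\bigr)^{(\lambda-1)/2}$.

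Next I would identify this local moduli problem and write down the obstruction bundle. A twisted stable map in the tooth factor collapses to a fixed $v \in A[2]$, so it factors as $\Sigma \to B\Z/2 \hookrightarrow [A/\pm1]$. Its double cover $\tilde\Sigma \to \Sigma$ is a degree-$2$ cover of a rational curve branched at the $\lambda+1$ stacky points, hence a (possibly nodal) hyperelliptic curve of arithmetic genus $g' = (\lambda-1)/2$. Since $T_{[A/\pm1]}|_v$ is the $2$-dimensional representation of $\Z/2$ of weight $-1$, the obstruction sheaf $R^1\pi_* f^*T_{[A/\pm1]}$ is the $\Z/2$-invariant part of $H^1(\tilde\Sigma,\mathcal{O})\otimes V$, which is canonically identified with $(\mathbb{E}^-)^{\oplus 2}$, where $\mathbb{E}^-$ is the anti-invariant Hodge bundle of the hyperelliptic double cover. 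Hence the tooth contribution is
\[
\int_{\moda[B\Z/2]{\lambda+1}{0}} e\bigl((\mathbb{E}^-)^{\oplus 2}\bigr) \;=\; \int_{\moda[B\Z/2]{\lambda+1}{0}} c_{g'}(\mathbb{E}^-)^{2},
\]
a pure hyperelliptic Hodge integral over (a $\mu_2$-gerbe over) $\overline{M}_{0,\lambda+1}$.

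The main obstacle is the evaluation of this integral and in particular getting the sign and the power of $4$ correct. My plan is to pull the integral back to $\overline{M}_{0,\lambda+1}$, accounting for the automorphisms of the twisted markings, and then evaluate $\int c_{g'}(\mathbb{E}^-)^2$ using a Mumford-type relation applied to the hyperelliptic double cover (so that $c(\mathbb{E}^-) c((\mathbb{E}^-)^\vee) = 1$ can be used to convert the top Chern class squared into a manageable expression), together with the known evaluation of such hyperelliptic Hodge integrals in the style of \cite{bryan_leung_k3} and \cite{bryan_leung_abelian}. The factor $4^{-g'}$ arises from the degree-$2$ cover contribution per anti-invariant differential, and the sign $(-1)^{g'}$ comes from applying the self-duality $\mathbb{E}^- \cong (\mathbb{E}^-)^\vee \otimes \omega$ combined with $c_{g'}(E)^2 = (-1)^{g'} c_{2g'}(E \oplus E^\vee)$ on a rank-$g'$ bundle. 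Multiplying the per-tooth factors $\bigl(-\tfrac{1}{4}\bigr)^{(\lambda_i^v-1)/2}$ across all $v$ and all parts, and observing that the exponents sum telescopically to $\tfrac{1}{2}(|\bk|-|\bk_\lambda|)$, then yields the claimed virtual degree.
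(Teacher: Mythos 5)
Your overall architecture --- reducing to a per-tooth local contribution via the product decomposition of $U_{\lambda,n}$, identifying the tooth obstruction summand with two copies of the (dual) hyperelliptic Hodge bundle through the weight $-1$ action of $\Z/2$ on $T_v[A/\pm1]$, and telescoping the exponents at the end --- matches the paper's strategy. But there is a genuine gap at the central step: you assert that the reduced obstruction theory on $U_{\lambda,n}$ splits as an orthogonal sum, so that each tooth contributes $\int e\big((\mathbb{E}^-)^{\oplus 2}\big)=\int c_{g'}(\mathbb{E}^-)^2$. This cannot be correct, for dimension reasons: a tooth with $\lambda=2g'+1$ stacky markings attaches to the handle at a \emph{stacky} node (odd monodromy), so the tooth factor is $\moda[B\Z/2]{2g'+2}{0}$, which has dimension $(2g'+2)-3=2g'-1$, while $c_{g'}(\mathbb{E}^-)^2$ has complex degree $2g'$. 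Your proposed local integral therefore vanishes identically rather than giving $\big(-\tfrac{1}{4}\big)^{g'}$. The missing ingredient is the obstructed smoothing of each handle--tooth node: the obstruction theory does not split but sits in the exact sequence
\[
0 \to \bigoplus_{P_i} \pi_*(T_{P_i}\Sigma_0 \otimes T_{P_i}\Sigma_i) \to Obs(f) \to Obs(\Sigma,f) \to 0,
\]
so the rank-one node-smoothing piece must be divided out of the naive obstruction bundle, dropping its rank from $2g'$ to $2g'-1$. The resulting per-tooth contribution is not a pure Euler class but the hyperelliptic Hodge integral
\[
\int_{\moda[B\Z/2]{2g'+2}{0}} \frac{c(\mathbb{E})^2}{1-\tfrac{1}{2}\psi_1},
\]
where the factor $\tfrac{1}{2}$ in front of $\psi_1$ records the stackiness of the node; the degree-$(2g'-1)$ term of this integrand now matches the dimension, and the integral is evaluated to $\big(-\tfrac{1}{4}\big)^{g'}$ in the references the paper relies on (Wise, Gillam). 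This is exactly how the paper's proof proceeds.

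A secondary point: your proposed sign mechanism --- self-duality of $\mathbb{E}^-$ combined with the identity $c_{g'}(E)^2=(-1)^{g'}c_{2g'}(E\oplus E^\vee)$ --- is moot once the node correction enters, since the integrand is no longer a top Chern class; and on its own it could not rescue the argument, because the dimensional vanishing above is independent of how one manipulates $c_{g'}(\mathbb{E}^-)^2$. In the correct computation the sign and the power of $4$ emerge together from the $\psi$-corrected Hodge integral (e.g.\ via the generating-function/topological-recursion technique the paper uses for the analogous two-node integral in Proposition \ref{prop_double_psi}), not from a duality identity applied to the Euler class.
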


We note that $U_{\bk,n}$ is the disjoint union of the $U_{\lambda,n}$ taken over all partition types $\lambda$. In particular, there is a component $M_\bk^\circ$ which consists of those curves with no collapsing components (corresponding to the partitions $1^{\bk(v)}$). We can now define
\begin{equation}\label{gwcirc}
GW_{\bk,n}^\circ = \deg [M_\bk^\circ]^{red}.
\end{equation}
From this number we obtain our expected count of hyperelliptic curves. More precisely, in the generic setting we expect that this counts the number of hyperelliptic curves together with the extra data of an ordering on the marked points which collapse to a given 2-torsion point. As such, define
\begin{equation}\label{nkn}
N_{\bk,n} = \frac{GW_{\bk,n}^\circ}{\prod_{v \in A[2]}\bk(v)!}
\end{equation}


Our main theorem of this section describes this relationship. Define the generating functions
\begin{gather*}
F_n(z_v) = \sum_{\bk : A[2] \to \zp} GW_{\bk,n} \prod_{v \in A[2]} \frac{z_v^{\bk(v)}}{\bk(v)!} \\
F_n^\circ(x_v) = \sum_{\bk : A[2] \to \zp} GW_{\bk,n}^\circ \prod_{v \in A[2]} \frac{x_v^{\bk(v)}}{\bk(v)!}
\end{gather*}

\begin{theorem}\label{thm_substitution}
The two generating functions $F_n$ and $F_n^\circ$ are equal after the substitution $x_v = 2 \sin(z_v/2)$.
\end{theorem}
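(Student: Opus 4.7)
The plan is to express $F_n$ as a weighted sum of $F_n^\circ$-type contributions by stratifying the moduli space into strata of comb curves indexed by the partition data, and then to recognize the resulting generating function identity as the exponential of the Taylor series for $2\sin(z/2)$.

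First I would reduce to the case that $A$ has Picard number one. Both $GW_{\bk,n}$ and $GW_{\bk,n}^\circ$ are defined via reduced virtual fundamental classes and are deformation-invariant, so we may choose $A$ generic. By Proposition \ref{prop_comb_curve_reduction} we then have $\moda{\bk}{n} = U_{\bk,n}$, which decomposes as a disjoint union over the combinatorial data of a comb curve with $(\bk(v))_{v \in A[2]}$ labeled stacky points: at each $v$, the $\bk(v)$ labeled points partition into blocks (one per tooth, with singleton blocks recording a stacky point sitting directly on the handle), all of odd size by Lemma \ref{lem_even_parts}. If $\bk_\lambda(v)$ is the number of blocks at $v$ and the block sizes form the partition $\lambda^v$, then Theorem \ref{thm_virt_degree} gives
\[
GW_{\bk,n} = \sum_{\bk' : A[2] \to \zp}\; GW_{\bk',n}^\circ \prod_{v\in A[2]}\Bigl(-\tfrac14\Bigr)^{(\bk(v)-\bk'(v))/2} S\bigl(\bk(v),\bk'(v)\bigr),
\]
where $S(k,r)$ denotes the number of set partitions of $\{1,\dots,k\}$ into $r$ blocks of odd size, and $\bk'(v)$ records the tooth count at $v$.

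The second step is a purely formal power series manipulation. I would verify the Taylor expansion
\[
2\sin(z/2) \;=\; \sum_{m\ge 1,\ m\text{ odd}}\Bigl(-\tfrac14\Bigr)^{(m-1)/2}\frac{z^m}{m!},
\]
and then raise it to the $r$-th power. Expanding multinomially and regrouping, one obtains
\[
\frac{\bigl(2\sin(z/2)\bigr)^r}{r!} \;=\; \sum_{k\ge r}\frac{z^k}{k!}\Bigl(-\tfrac14\Bigr)^{(k-r)/2} S(k,r),
\]
because $\tfrac{1}{r!}\sum \prod_i (m_i!)^{-1}$ over ordered compositions of $k$ into $r$ odd parts equals $S(k,r)/k!$. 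Substituting the formula for $GW_{\bk,n}$ into the definition of $F_n$ and exchanging the order of summation, the inner sum is a product over $v$ of exactly the expansion above with $z = z_v$ and $r = \bk'(v)$, which collapses the expression to $F_n^\circ\bigl(2\sin(z_v/2)\bigr)$.

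The main delicate point—and really the only substantive one beyond bookkeeping—is making sure the combinatorial weight of each stratum in the stratification of $\moda{\bk}{n}$ is precisely the set-partition count $S(\bk(v),\bk'(v))$, i.e.\ that the tuples of \emph{unordered} partitions $\lambda^v$ appearing in Theorem \ref{thm_virt_degree} must be refined by the way the \emph{labeled} marked points distribute among the teeth, without over- or under-counting automorphisms of repeated teeth. Given the product decomposition of $U_{\lambda,n}$ recorded in the excerpt, this follows by tracking how many labelings of marked points realize a given partition $\lambda^v$, and this is exactly the identity $\tfrac{1}{r!}\sum \prod_i (m_i!)^{-1} = S(k,r)/k!$ used above. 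With that in hand, the theorem follows by matching coefficients.
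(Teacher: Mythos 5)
Your proposal is correct and follows essentially the same route as the paper's own proof: the paper likewise combines Lemma \ref{lem_even_parts}, the stratification by multipartitions with the virtual degrees of Theorem \ref{thm_virt_degree}, and the identity $GW_{\bk,n} = \sum_{\ell} GW_{\bk-2\ell,n}^\circ \bigl(-\tfrac14\bigr)^{|\ell|}\prod_v s\bigl(\bk(v),(\bk-2\ell)(v)\bigr)$, where its coefficient $s(k,\ell)$ is exactly your set-partition count $S(k,\ell)$ arising from the multinomial expansion of powers of $2\sin(z/2)$. The only cosmetic difference is direction: the paper expands $F_n^\circ\bigl(2\sin(z_v/2)\bigr)$ and matches it to $F_n$, whereas you decompose $F_n$ and collapse it to $F_n^\circ$, including the same careful treatment of unordered teeth versus labeled marked points.
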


A proof of this theorem will be provided in Appendix \ref{appendix_mod}. However, from the equation \eqref{nkn}, we immediately obtain the following.

\begin{corollary}
Let $\bk : A[2] \to \Z_{\geq 0}$ be a function and let $g$ be such that $|\bk| = 2g + 2$. Then the function $F_{g,\bk}$ of theorem \ref{thm_main} is given by  the coefficient of $\prod_{v\in A[2]}x_v^{\bk(v)}$ in
\begin{align*}
\sum_{n = 0}^\infty F_n^\circ(x_v)u^n &= \sum_{n=0}^\infty \sum_{\bk : A[2] \to \Z_{\geq 0}} GW_{\bk,n}^\circ \prod_{v \in A[2]} \frac{x_v^{\bk(v)}}{\bk(v)!}u^n \\
 &= \sum_{n=0}^\infty \sum_{\bk : A[2] \to \Z_{\geq 0}} N_{\bk,n} \prod_{v \in A[2]} x_v^{\bk(v)}u^n.
\end{align*}
\end{corollary}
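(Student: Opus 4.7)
The plan is to unwind definitions and match generating functions; the corollary is essentially a bookkeeping statement, so no new ideas beyond Theorem \ref{thm_substitution} and equation \eqref{nkn} should be required.

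First I would verify the two equalities displayed in the statement. The first one is just the definition of $F_n^\circ(x_v)$, substituted into the generating series in $u$. For the second, equation \eqref{nkn} rearranges to give $GW_{\bk,n}^\circ = N_{\bk,n}\prod_{v \in A[2]} \bk(v)!$, and substituting this into each term cancels the factorials:
\[
GW_{\bk,n}^\circ \prod_{v \in A[2]} \frac{x_v^{\bk(v)}}{\bk(v)!} \;=\; N_{\bk,n}\prod_{v \in A[2]} x_v^{\bk(v)}.
\]

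Next I would extract the coefficient of $\prod_{v \in A[2]} x_v^{\bk(v)}$ from the right-hand side. Because this monomial already appears with the exact exponent $\bk(v)$ in the term indexed by $\bk$, and does not appear in any other term of the sum over functions $A[2]\to \zp$, the coefficient is simply $\sum_{n=0}^\infty N_{\bk,n}\, u^n$. It then remains to identify this with $F_{g,\bk}(u) = \sum_{h=g}^\infty N_{\bk,h}\, u^{h-1}$ from Theorem \ref{thm_main}. This matches under the change of index $n = h - 1$ already fixed in Section \ref{chap:intro}, once one observes that $N_{\bk,n}$ vanishes whenever $n < g-1$: for arithmetic genus $h = n+1 < g$ there is simply no hyperelliptic curve of geometric genus $g$ on $(A_n, L_n)$, so the moduli space whose degree defines $GW_{\bk,n}^\circ$ is empty.

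There is no real obstacle here, since each step is a substitution or a trivial coefficient extraction. The only mild care needed is keeping the two indexing conventions ($h$ for arithmetic genus and $n = h-1$ for the polarization type) straight, and verifying that the range $n \geq 0$ in the coefficient extraction is compatible with the range $h \geq g$ in the definition of $F_{g,\bk}$, which follows from the vanishing noted above.
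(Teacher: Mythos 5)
Your proposal is correct and matches the paper's own treatment: the paper derives this corollary immediately from equation \eqref{nkn} by the same substitution $GW_{\bk,n}^\circ = N_{\bk,n}\prod_{v}\bk(v)!$ and cancellation of factorials. Your added care about the index convention $n = h-1$ and the vanishing of $N_{\bk,n}$ for $n < g-1$ (since geometric genus cannot exceed arithmetic genus) is a sound elaboration of what the paper leaves implicit.
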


\begin{remark}
In the case that the Picard number of $A$ is 1, we can define $GW_{\bk, n}^\circ$ and $N_{\bk, n}$ directly via equations \eqref{gwcirc} and \eqref{nkn}. However, they are defined for all Abelian surfaces $A$ via the relationships given  by Theorem \ref{thm_substitution}.
\end{remark}

%
%

\subsection{The case $A \cong S \times F$}\label{sec_specialization}

We will sometimes have need to specialize to the case that $A \cong S \times F$ for generic elliptic curves $S$ and $F$. In such a case, the quotient $A/\pm1$ comes equipped with an elliptic fibration $A/\pm1 \to S/\pm1$ whose general fibre is the elliptic curve $F$. This has four distinguished fibres over $S[2]$ which are isomorphic to $F/\pm1$.

So let $f : \Sigma \to [A/\pm1]$ be a genus 0 twisted stable map in the class $[S/\pm1] + n [F/\pm1]$ (which we will assume without loss of generality collapses no components). In such a case, the source must be a tree of rational curves $\Sigma_0 \cup \Sigma_1 \cup \cdots \cup \Sigma_k$. We can label these so that $f_*[\Sigma_0] = [S/\pm1]$ and $f_*[\Sigma_i] = n_i[F/\pm1]$ for $i \geq 1$. In particular, $f |_{\Sigma_0}$ is an isomorphism (and so must contain exactly 4 stacky points), while $f |_{\Sigma_i}$ is a ramified cover of one of the distinguished fibres, i.e. the rational orbi-curve $[F/\pm1]$.

As a (representable) ramified cover of an orbi-curve, it follows that
\begin{enumerate}
\item The image of each stacky point is stacky.
\item The pre-image of each stacky point is a collection of stacky points (with odd ramification) and some non-stacky points (with even ramification).
\end{enumerate}

In such a case, the moduli space $\moda{2g+2}{n}$ will be isomorphic to a product of spaces of Hurwitz covers of the fibre $F/\pm1$, a fact which will be necessary in order to compute the orbifold Gromov-Witten theory of $[A/\pm1]$ {\em without} using the crepant resolution conjecture.


\section{Main Work}\label{chap_main}

\subsection{Computation of the invariants on $\Km(A_n)$}

The goal of this section is to compute the relevant part of the Gromov-Witten potential function for $\Km(A_n)$.

Let $f : \Sigma \to A_n$ be a hyperelliptic curve representing the class $c_1(L_n)^\vee$ and such that $f(w) \in A[2]$ for all Weirstrass points $w$. Let $\beta_n$ be the class in $H_2\big(\Km(A_n)\big)$ of the proper transform of $(\pi \circ f)(\Sigma)$; note that this is a rational curve. Let $H_n = \alpha\big(c_1(L_n)^\vee\big)$. If we choose a basis of $H_1(A_n)$ as in section \ref{sec_ab_kum}, then we can write $H_n = S + nF$ where $S = \alpha(e_1 \wedge f_1)$ and $F = \alpha(e_2 \wedge f_2)$.

Recall now that, given two basis elements $\lambda_1, \lambda_2$ of $\Lambda$, we can consider the image $V$ in $A$ of the real 2-plane spanned by $\lambda_1, \lambda_2$. We can further consider the homology class $\beta_V$ of the proper transform of $V/\pm1$ in $\Km(A)$, which by Proposition \ref{prop_congruence} can be written as
\[
\beta_V = \frac{1}{2}\alpha\big([V]\big) - \frac{1}{2}\sum_{v\in\varepsilon_V}E_v.
\]
We see that to each pair of basis elements we can associate en element $\varepsilon_V \in \Pi_0$ (since $\lambda_1, \lambda_2$ span a plane, this will in fact be in $\Pi_2$).

Let $\varepsilon_0$ be the element so obtained from the basis elements $e_1, f_1$, let $\varepsilon_1^*$ be the element so obtained from the basis elements $e_2, f_2$, and let finally $\varepsilon_1 = \varepsilon_0 + \varepsilon_1^*$ (where, recall, the summation is done in $\mathfrak{P}(A[2])$ and is hence the symmetric difference of the two elements).

\begin{proposition}\label{prop_cong}
We have the following congruences.
\begin{gather*}
\beta_{2k} \equiv \frac{1}{2}H_{2k} - \hat\varepsilon_0 \pmod K \\
\beta_{2k+1} \equiv \frac{1}{2}H_{2k+1} - \hat\varepsilon_1 \pmod K
\end{gather*}
\end{proposition}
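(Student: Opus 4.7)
The plan is to show $\beta_n \equiv \tfrac{1}{2}H_n - \hat\varepsilon_? \pmod{K}$ by exhibiting both quantities inside $H_2(\Km(A_n))$ and then showing that their difference lies in $\tfrac{1}{2}\Lambda \cap H_2(\Km(A_n)) = K$.

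First I would identify $\beta_n$ exactly modulo the $E_v$'s using the intersection pairing. For any $D \in H_2(A_n)$, the class $\alpha(D)$ is represented by the proper transform of a generic translate $\pi(D+t)$, which avoids the exceptional divisors; computing $\pi(C_n) \cdot \pi(D+t)$ in $A_n/\pm1$ by noting that $C_n \cap ((D+t)\cup(D-t))$ consists of $2 c_1(L_n)^\vee \cdot D$ points paired by the involution yields $\beta_n \cdot \alpha(D) = c_1(L_n)^\vee \cdot D$. Since $\alpha$ doubles the intersection form, this equals $\tfrac{1}{2}H_n \cdot \alpha(D)$, so $\beta_n - \tfrac{1}{2}H_n$ is orthogonal to $\alpha(H_2(A_n))$ and lies rationally in $K\otimes\Q$. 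Intersecting with each $E_v$ (and using $\beta_n \cdot E_v = \bk(v)$, obtained by counting smooth branches of $\pi(C_n)$ at the singularity $\pi(v)$) then pins down the exact equality
\[
\beta_n = \tfrac{1}{2}H_n - \tfrac{1}{2}\sum_{v\in A[2]}\bk(v)E_v.
\]

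Next I would verify that the proposed right-hand side $\tfrac{1}{2}H_n - \hat\varepsilon_?$ is itself an integral class in $H_2(\Km(A_n))$. Proposition \ref{prop_congruence} applied to $V_0$ and $V_1^*$ gives integral classes $\beta_{V_0} = \tfrac{1}{2}S - \hat\varepsilon_0$ and $\beta_{V_1^*} = \tfrac{1}{2}F - \hat\varepsilon_1^*$. For $n = 2k$ this immediately yields
\[
\tfrac{1}{2}H_{2k} - \hat\varepsilon_0 = \beta_{V_0} + kF \in H_2(\Km(A_n)),
\]
while for $n = 2k+1$, the identity $\hat\varepsilon_0 + \hat\varepsilon_1^* - \hat\varepsilon_1 = \sum_{v\in \varepsilon_0\cap \varepsilon_1^*}E_v$, which lies in $\Lambda$, gives
\[
\tfrac{1}{2}H_{2k+1} - \hat\varepsilon_1 = \beta_{V_0} + \beta_{V_1^*} + kF + \sum_{v\in \varepsilon_0\cap \varepsilon_1^*}E_v \in H_2(\Km(A_n)).
\]

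With both pieces integral, the difference $\beta_n - (\tfrac{1}{2}H_n - \hat\varepsilon_?)$ lies in $H_2(\Km(A_n))$ and, by the first step, also in $\tfrac{1}{2}\Lambda \subset \Lambda\otimes\Q$. Since $K$ is the minimal primitive sublattice of $H_2(\Km(A_n))$ containing $\Lambda$, we have $H_2(\Km(A_n)) \cap (\Lambda\otimes\Q) = K$, so the difference lies in $K$, as required. The main obstacle is the intersection computation $\beta_n \cdot E_v = \bk(v)$ in the first step: it is transparent when the $\bk(v)$ branches of $\pi(C_n)$ at $\pi(v)$ are smooth and meet $E_v$ transversely at distinct points, but coinciding tangent directions among branches will require a more careful local multiplicity analysis.
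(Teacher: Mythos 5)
Your route is genuinely different from the paper's, and after one repair it works. The paper proves the congruence geometrically: it deforms $A$ to a product $S \times F$, decomposes the source curve as a comb over the fibration $A/\pm1 \to S/\pm1$, and tracks the parities of the ramification indices $a_v$ over each $v \in A[2]$, using that even contributions and affine 3-planes $\hat\eta \in K$ die mod $K$, so that only the section component's stacky points survive, giving $\hat\varepsilon_0$ (resp.\ $\hat\varepsilon_1$). You instead argue lattice-theoretically: the computation $\beta_n \cdot \alpha(D) = c_1(L_n)^\vee \cdot D = \tfrac{1}{2}H_n \cdot \alpha(D)$ shows $\beta_n - \tfrac{1}{2}H_n$ is orthogonal to $\alpha\big(H_2(A_n)\big)$, hence (by the rank count $22 = 6 + 16$ and nondegeneracy) lies in $\Lambda \otimes \Q$; Proposition \ref{prop_congruence} applied to $V_0$ and $V_1^*$, together with the correct identity $\hat\varepsilon_0 + \hat\varepsilon_1^* - \hat\varepsilon_1 = \sum_{v \in \varepsilon_0 \cap \varepsilon_1^*} E_v \in \Lambda$, shows $\tfrac{1}{2}H_n - \hat\varepsilon_i$ is an integral class for $i \equiv n \pmod 2$; and since $K$ is the saturation of $\Lambda$ in $H_2\big(\Km(A_n)\big)$, an integral class lying in $\Lambda \otimes \Q$ lies in $K$. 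This avoids the deformation to $S \times F$ and all ramification bookkeeping, at the cost of making the appearance of $\varepsilon_0$ versus $\varepsilon_1$ a parity accident of $\tfrac{n}{2}F$ rather than something geometrically visible.

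The repair: the ``exact equality'' $\beta_n \cdot E_v = \bk(v)$, and hence $\beta_n = \tfrac{1}{2}H_n - \tfrac{1}{2}\sum_v \bk(v)E_v$, is false in general, and no local multiplicity analysis will rescue it. A non-stacky point of $\Sigma$ mapping to $v$ must have even local degree $\geq 2$ and contributes that even amount to $a_v = \beta_n \cdot E_v$, and a stacky point can have ramification index $3, 5, \ldots$; the paper itself only claims $a_v \equiv \bk(v) \pmod 2$. Fortunately this step is never used: your final paragraph needs only (a) $\beta_n - \tfrac{1}{2}H_n \in \Lambda \otimes \Q$, which follows from the orthogonality computation alone, and (b) integrality of $\beta_n$ (it is the class of an actual curve) and of $\tfrac{1}{2}H_n - \hat\varepsilon_i$. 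So the ``main obstacle'' you flag is a non-issue --- delete the claim $\beta_n \cdot E_v = \bk(v)$, or weaken it to the mod 2 congruence, and your proof is complete as it stands.
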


\begin{proof}
We assume as per Section \ref{sec_gw_hyper} that the image curve is fixed by $\pm1$ and so descends to a genus 0 map $\tilde{f} : \Sigma \to [A/\pm1]$. We assume further (by deformation invariance) that $A \cong S \times F$, which puts us in the situation described in Section \ref{sec_specialization}. We will further assume that $n$ is even, the odd case being similar.

Let $\Sigma = \Sigma_0 \cup \Sigma_1 \cup \cdots \cup \Sigma_k$ be the source curve. Since $n$ is even, we must have that either the degree of each $\Sigma_i$ is even, or that any odd ones come in pairs. As in the proof of Proposition \ref{prop_congruence}, we have that $H_n$ is the class of a double cover of the proper transform of the image of $\tilde f$ in $\Km(A)$, and so we have that
\[
\beta_n = \frac{1}{2}H_n - \frac{1}{2}\sum_{v\in A[2]} a_vE_v
\]
where $a_v$ is the intersection multiplicity of the proper transform with the exceptional curve $E_v$. This is given by the sum of all the ramification indices over all points $p \in \Sigma$ which map to $v \in A[2]$. Since $E_v \in K$, this only depends (mod $K$) on the ramification indices mod 2. Since $\tilde f_* [\Sigma_i] = n_i[F/\pm1]$, it follows that the image of all the stacky points on a given $\Sigma_i$ lie in $\varepsilon_1^*$, or some translation thereof.

Recall from Section \ref{sec_specialization} that the ramification must be even at each non-stacky point in the pre-image, and is odd at each stacky point in the pre-image, and hence $a_v \equiv \bk(v) \pmod 2$.

Consider first a component $\Sigma_i$ with $\tilde f|_{\Sigma_i}$ of even degree. Due to these ramification considerations, it follows that over each $v \in A[2]$ the number of stacky pre-images must be even, and so (mod $K$) this contributes zero to $\beta_n$.

Consider next components $\Sigma_i, \Sigma_j$ with $\tilde f|_{\Sigma_i}$ and $\tilde f|_{\Sigma_j}$ of odd degree (recall that these must come in pairs). We see that for both of these the number of stacky pre-images of a given $v$ must be odd, and so one of two things occur.
\begin{enumerate}
\item $\Sigma_i$ and $\Sigma_j$ map via $\tilde f$ to the same fibre.
\item $\Sigma_i$ and $\Sigma_j$ map to different fibres.
\end{enumerate}
In the first case, the contribution to the number of stacky pre-images winds up being even, and so (mod $K$) contributes zero. In the second case, the stacky points form an affine 3-plane $\eta$ in $A[2]$---but since $\hat\eta \in K$, it follows that (mod $K$) these also contribute nothing. As such, all that contributes (mod $K$) are the stacky points coming from the curve $\Sigma_0$. But this is exactly $\varepsilon_0$, which completes the proof.

\end{proof}

\begin{remark}\label{rmk_condition}
The condition on $P$ so that the generating function $F_{g,\bk}(u)$ is non-zero can be now explained as follows. Recall that $P$ is the collection of those $v \in A[2]$ such that $\bk(v)$ is odd. It will arise naturally during the proof of Theorem \ref{thm_main} that we must have $P \equiv \varepsilon_i \pmod K$. One consequence of this is that if $P \equiv \varepsilon_0 \pmod K$, then only even polarizations can occur (and conversely for $P \equiv \varepsilon_1 \pmod K$).
\end{remark}

We are now ready to compute the potential function. We first recall the definition of the Gromov-Witten potential of a smooth projective variety.

\begin{definition}\label{def_gw_potential}
Let $X$ be a smooth projective variety, and let $\gamma_0, \ldots, \gamma_a$ be an additive basis of $H^*(X)$. The genus 0 {\em Gromov-Witten potential function} is defined by
\[
F^X(y_0, \ldots, y_a, q) = \sum_{m_0, \ldots, m_a = 0}^\infty \sum_{\beta \in H_2(X)}\langle \gamma_0^{m_0} \cdots \gamma_a^{m_a}\rangle_\beta^X \frac{y_0^{m_0}}{m_0!} \cdots \frac{y_a^{m_a}}{m_a!} q^\beta.
\]
\end{definition}

As stated in the introduction, we are only concerned with a restricted form of this function. More specifically, we are only concerned with homology classes $\beta$ such that $p_*\beta = \tfrac{1}{2}\pi_*c_1(L_n)^\vee$, and so we make the following definition.

\begin{definition}\label{def_restricted_potential}
Define the {\em restricted genus 0 potential function} as
\begin{multline*}
F_n := F^{\Km(A_n)}(y_0, \ldots, y_a, q) = \\
\sum_{m_0, \ldots, m_a = 0}^\infty \sum_{w \in K}\langle \gamma_0^{m_0} \cdots \gamma_a^{m_a}\rangle_{\beta_n+w}^{\Km(A_n)}\frac{y_0^{m_0}}{m_0!} \cdots \frac{y_a^{m_a}}{m_a!}  q^{\beta_n+w}.
\end{multline*}
\end{definition}

\begin{remark}\label{rmk_basis}
Due to dimension considerations, the only classes that will produce non-zero invariants will be divisor classes. As such, we choose as a basis of $H^2\big(\Km(A_n)\big)$ the classes $\gamma_v$ which are dual to the exceptional curve classes $E_v$, as well as classes $\gamma_S, \gamma_F$ dual to $S$ and $F$, respectively. As such, if we define $\mathbf{m} = (m_{v_1}, \ldots, m_{v_{16}}, m_S, m_F)$ then we can write the function $F_n$ as
\begin{multline*}
F_n(y_{v_1}, \ldots, y_{v_{16}}, y_S, y_F,q) = \\
\sum_{\mathbf{m}} \sum_{w \in K}\langle \gamma_S^{m_S}\gamma_F^{m_F} \prod_{v\in A[2]} \gamma_v^{m_v}\rangle_{\beta_n+w}\frac{y_S^{m_S}}{m_S!} \frac{y_F^{m_F}}{m_F!}\prod_{v \in A[2]}\frac{y_v^{m_v}}{m_v!} q^{\beta_n+w}
\end{multline*}
where we omit for simplicity of notation the superscript on the brackets $\langle \cdots \rangle_\beta^{\Km(A_n)}$.

Moreover, define the generating function
\[
F := \sum_{n=0}^\infty F_n(y_{v_1}, \ldots, y_{v_{16}},y_S,y_F,q).
\]
{\em A priori} this does not make sense, but as we will see, the formal variable $q$ used to define $F_n$ permits this to be well defined as a formal power series in the variables $y_\alpha$ and $q$. It is this function that we will use (with the crepant resolution conjecture) to compute the number of hyperelliptic curves in $A$.
\end{remark}

\begin{theorem}\label{thm_restricted_potential_km}
The restricted genus 0 potential function $F$ is given by
\begin{align*}
F &= \lambda_0 \frac{u^2}{\Delta(u^2)}\sum_{w \in K} u^{-\langle w, w - 2\hat\varepsilon_0\rangle} \prod_{v \in A[2]}\exp \big((\textstyle\int_w\gamma_v)y_v\big)q^w \\
 &\quad + \lambda_1 \frac{u^2}{\Delta(u^2)}\sum_{w \in K} u^{-\langle w, w - 2\hat\varepsilon_1\rangle+1} \prod_{v \in A[2]}\exp \big((\textstyle\int_w\gamma_v)y_v\big)q^w
\end{align*}
where $\langle\, , \rangle$ is the intersection form on $K \subset \Km(A_n)$, where $u$ and $\lambda_i$ are given by
\begin{gather}\label{eq_variables}
u = \big(\exp(y_F)q^F\big)^{1/2} \notag\\
\lambda_i = \big(\exp(y_S)q^S\big)^{1/2}\prod_{v \in A[2]}\exp\big((\textstyle\int_{-\hat\varepsilon_i}\gamma_v)y_v\big)q^{-\hat\varepsilon_i}
\end{gather}
and where $\Delta(q)$ the weight 12 cusp form defined as $\Delta(q) = q\prod_{k=1}^\infty (1 - q^k)^{24}$ which satisfies
\[
\frac{q}{\Delta(q)} = 1 + 24q + 324q^2 + 3200q^3 + \cdots
\]

\end{theorem}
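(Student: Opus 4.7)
My plan is to compute $F$ by combining the Yau-Zaslow formula \cite{yau_zaslow} for reduced genus $0$ invariants on a $K3$ with the divisor equation, and then to reorganize the resulting double sum (over the polarization index $n$ and the lattice vector $w\in K$) into the stated closed form.

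I would begin by fixing a representative class. Proposition \ref{prop_cong} gives $\beta_n\equiv \tfrac{1}{2}H_n-\hat\varepsilon_i\pmod K$, with $i=0$ when $n$ is even and $i=1$ when $n$ is odd, so after absorbing the fixed $K$-shift into the summation variable I may take $\beta_n+w = \tfrac{1}{2} H_n-\hat\varepsilon_i + w$. Because $\alpha$ doubles the intersection form and $\alpha(H_2(A_n))\perp K$,
\[
\langle\beta_n+w,\beta_n+w\rangle = n + \langle w,w-2\hat\varepsilon_i\rangle + \langle\hat\varepsilon_i,\hat\varepsilon_i\rangle,
\]
with $\langle\hat\varepsilon_0,\hat\varepsilon_0\rangle=-2$ and $\langle\hat\varepsilon_1,\hat\varepsilon_1\rangle=-3$ by Remark \ref{rmk_int_form} together with the counts $|\varepsilon_0|=4$ and $|\varepsilon_1|=6$. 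The two values of $i$ are precisely what produce the two summands indexed by $\lambda_0$ and $\lambda_1$.

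Next, for each $n$ and $w$ I would apply Yau-Zaslow: the reduced genus $0$ invariant of a primitive class $\beta$ on a $K3$ with $\beta^2=2h-2$ equals $[t^h](t/\Delta(t))$. The divisor equation turns the multi-insertion sum into $\exp\!\bigl(\sum_\alpha y_\alpha\int_{\beta_n+w}\gamma_\alpha\bigr)$, and a direct computation using the duality of the bases yields $\int_{\beta_n+w}\gamma_S=1/2$, $\int_{\beta_n+w}\gamma_F=n/2$, and $\int_{\beta_n+w}\gamma_v=\int_{-\hat\varepsilon_i}\gamma_v+\int_w\gamma_v$. Multiplying by $q^{\beta_n+w}=q^{S/2}q^{nF/2}q^{-\hat\varepsilon_i}q^w$ and regrouping, the pieces $(\exp(y_S)q^S)^{1/2}$, $u^n=(\exp(y_F)q^F)^{n/2}$, and $\prod_v\exp\!\bigl((\textstyle\int_{-\hat\varepsilon_i}\gamma_v)y_v\bigr)q^{-\hat\varepsilon_i}$ combine into $\lambda_i\cdot u^n$, while the residual factor $\prod_v\exp\!\bigl((\textstyle\int_w\gamma_v)y_v\bigr)\,q^w$ sits inside the $w$-sum. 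Summing the Yau-Zaslow contributions over $n$ for fixed $w$ and $i$ then telescopes as
\[
\sum_{h\ge 0}u^{2h-C(w)}\,[t^h]\!\left(\tfrac{t}{\Delta(t)}\right) \;=\; u^{-C(w)}\cdot\frac{u^2}{\Delta(u^2)},
\]
where $C(w)=\langle w,w-2\hat\varepsilon_i\rangle+\langle\hat\varepsilon_i,\hat\varepsilon_i\rangle+2$. Substituting $\langle\hat\varepsilon_0,\hat\varepsilon_0\rangle=-2$ gives $-C(w)=-\langle w,w-2\hat\varepsilon_0\rangle$, and $\langle\hat\varepsilon_1,\hat\varepsilon_1\rangle=-3$ gives $-C(w)=-\langle w,w-2\hat\varepsilon_1\rangle+1$, matching the exponents in the statement exactly.

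The principal obstacle is the bookkeeping: one must track carefully that the $n$-sum really collapses to $u^2/\Delta(u^2)$ with the claimed exponent shift, and that the two constants $\langle\hat\varepsilon_i,\hat\varepsilon_i\rangle$ deliver exactly the $0$ versus $+1$ discrepancy in the $u$-exponent between the two summands. A secondary point to verify is the primitivity of the class $\beta_n+w$, which is what allows Yau-Zaslow to be invoked in its most basic form; this is automatic provided the Picard number of $A_n$ is one, so that $H_n$ (and hence $\beta_n+w$ in $\Km(A_n)$) is primitive.
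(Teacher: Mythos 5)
Your proposal is correct and follows essentially the same route as the paper's proof: the divisor equation reduces each $F_n$ to the bare invariants $\langle\,\rangle_{\beta_n+w}$, Proposition \ref{prop_cong} replaces $\beta_n+w$ by $\tfrac{1}{2}H_n+w-\hat\varepsilon_i$, Yau--Zaslow converts the invariants to coefficients of $u^2/\Delta(u^2)$, and the substitution \eqref{eq_variables} with a re-indexing of the $n$-sum produces the stated exponents. The only difference is cosmetic bookkeeping: you package the exponent shift into the constants $\langle\hat\varepsilon_0,\hat\varepsilon_0\rangle=-2$ and $\langle\hat\varepsilon_1,\hat\varepsilon_1\rangle=-3$ directly, where the paper splits into even and odd $n$ and re-indexes via $n=k+\tfrac{1}{2}\langle w,w-2\hat\varepsilon_i\rangle$, with identical outcome.
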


\begin{proof}
Let $(A_n, L_n)$ be a polarized abelian surface with polarization type $(1, n)$ and with Kummer surface $\Km(A_n)$. As stated in Definition \ref{def_restricted_potential}, the (restricted) Gromov-Witten potential function for this is
\[
F_n = \sum_{\mathbf{m}} \sum_{w\in K} \big\langle \gamma_S^{m_S}\gamma_F^{m_F} \prod_{v\in A[2]} \gamma_v^{m_v}\big\rangle_{\beta_n + w} \frac{y_S^{m_S}}{m_S!} \frac{y_F^{m_F}}{m_F!}\prod_{v \in A[2]}\frac{y_v^{m_v}}{m_v!} q^{\beta_n + w}
\]
Since on $\Km(A_n)$, as stated above, we only need to consider divisor classes the divisor equation simplifies this to
\begin{align*}
F_n &= \sum_{\mathbf{m}} \sum_{w\in K} \langle\, \rangle_{\beta_n + w} \frac{(\int_{\beta_n+w}\gamma_S)^{m_S}y_S^{m_S}}{m_S!} \frac{(\int_{\beta_n+w}\gamma_F)^{m_F}y_F^{m_F}}{m_F!}\cdot \\
&\qquad\qquad\qquad\qquad\prod_{v \in A[2]} \frac{(\int_{\beta_n+w}\gamma_v)^{m_v}y_v^{m_v}}{m_v!} q^{\beta_n + w} \\
 &= \sum_{w \in K} \langle \, \rangle_{\beta_n + w} \exp\big((\textstyle\int_{\beta_n+w}\gamma_S)y_s\big)\exp\big((\textstyle\int_{\beta_n+w}\gamma_F)y_F\big) \cdot \\
 &\qquad\qquad\qquad\qquad\prod_{v \in A[2]} \exp\big((\textstyle\int_{\beta_n+w}\gamma_v) y_v\big) q^{\beta_n + w}.
\end{align*}

Since we are summing over $K$, by proposition \ref{prop_cong} we can replace $\beta_n + w$ with $\tfrac{1}{2} H_n + w - \hat\varepsilon_i$ (with $i \equiv n \pmod 2$), and so noting that (see remark \ref{rmk_basis})
\begin{align*}
\int_{\tfrac{1}{2}H_n + w - \hat\varepsilon_i}\gamma_S &= \tfrac{1}{2} \\
\int_{\tfrac{1}{2}H_n + w - \hat\varepsilon_i}\gamma_F &= \tfrac{n}{2} \\
\int_{\tfrac{1}{2}H_n + w - \hat\varepsilon_i}\gamma_v &= \int_{w - \hat\varepsilon_i}\gamma_v
\end{align*}
we can write this as
\begin{multline*}
F_{2k} = \exp(\tfrac{1}{2}y_S)\sum_{w \in K} \langle\, \rangle_{\frac{1}{2}H_{2k} + w-\hat\varepsilon_0} \exp(\tfrac{1}{2}y_F)^{2k} \\
\prod_{v \in A[2]} \exp\big((\textstyle\int_{w - \hat\varepsilon_0}\gamma_v)y_v\big) q^{\tfrac{1}{2}H_{2k} + w-\hat\varepsilon_0}
\end{multline*}
(and similarly for $F_{2k+1}$).

Let $C_d$ be a primitive curve class in a $K3$ surface $X$ satisfying $C_d^2 = 2d - 2$, and let $N_d = \langle\, \rangle_{C_d}^X$ be the (reduced) genus 0 Gromov-Witten invariant. Then the Yau-Zaslow theorem \cite{yau_zaslow, bryan_leung_k3} states that these numbers satisfy
\[
\sum_{d = 0}^\infty N_dq^d = \frac{q}{\Delta(q)} 
\]
where $\Delta(q)$ is defined above to be $\prod_{k=1}^\infty (1 - q^k)^{24}$. Since $H_n \in \Lambda^\perp$, and $H_n^2 = 4n$, we have that
\[
\big(\tfrac{1}{2}H_{2k} + w - \hat\varepsilon_0\big)^2 = 2k - 2 + \langle w, w - 2\hat\varepsilon_0\rangle
\]
(and similarly for the odd case). Thus we find
\begin{gather*}
\langle\, \rangle_{\frac{1}{2}H_{2k} + w-\hat\varepsilon_0} = N_{k + \tfrac{1}{2}\langle w, w - 2\hat\varepsilon_0\rangle}\\
\langle\, \rangle_{\frac{1}{2}H_{2k+1} + w-\hat\varepsilon_0} = N_{k + \tfrac{1}{2}\langle w, w - 2\hat\varepsilon_1\rangle}
\end{gather*}

As mentioned in Remark \ref{rmk_basis}, let $F = \sum_{n=0}^\infty F_n$. We have
\begin{align*}
F &= \sum_{k=0}^\infty F_{2k} + \sum_{k=0}^\infty F_{2k+1} \\
 &= \exp(\tfrac{1}{2}y_S)\sum_{k=0}^\infty \sum_{w \in K} N_{k + \tfrac{1}{2}\langle w , w - 2\hat\varepsilon_0 \rangle} \exp(\tfrac{1}{2}y_F)^{2k} \\
 & \qquad\qquad\qquad \prod_{v \in A[2]} \exp\big((\textstyle\int_{w - \hat\varepsilon_0}\gamma_v)y_v\big) q^{\tfrac{1}{2}H_{2k} + w-\hat\varepsilon_0} +\\
 & \quad \exp(\tfrac{1}{2}y_S)\sum_{k=0}^\infty\sum_{w \in K} N_{k + \tfrac{1}{2}\langle w , w - 2\hat\varepsilon_1 \rangle}\exp(\tfrac{1}{2}y_F)^{2k+1} \\
 & \qquad\qquad\qquad \prod_{v \in A[2]} \exp\big((\textstyle\int_{w - \hat\varepsilon_1}\gamma_v)y_v\big) q^{\tfrac{1}{2}H_{2k+1} + w-\hat\varepsilon_1}.
\end{align*}
If we then perform the substitutions given in equation \eqref{eq_variables}, this simplifies to
\begin{align*}
F &= \lambda_0\sum_{k=0}^\infty \sum_{w \in K} N_{k + \tfrac{1}{2}\langle w , w - 2\hat\varepsilon_0 \rangle} u^{2k} \prod_{v \in A[2]}\exp\big((\textstyle\int_w\gamma_v)y_v\big)q^w \\
 &\quad \lambda_1\sum_{k=0}^\infty \sum_{w \in K} N_{k + \tfrac{1}{2}\langle w , w - 2\hat\varepsilon_1 \rangle} u^{2k+1} \prod_{v\in A[2]}\exp\big((\textstyle\int_w\gamma_v)y_v\big)q^w
\end{align*}

To simplify this we perform the substitution $n = k + \tfrac{1}{2}\langle w, w - \hat\varepsilon_i\rangle$ which yields
\begin{align*}
F &= \lambda_0 \sum_{n=0}^\infty \sum_{w \in K} N_n u^{2n}u^{-\langle w, w - 2\hat\varepsilon_0\rangle} \prod_{v\in A[2]}\exp\big((\textstyle\int_w\gamma_v)y_v\big)q^w \\
 &\quad + \lambda_0 \sum_{n=0}^\infty \sum_{w \in K} N_n u^{2n}u^{-\langle w, w - 2\hat\varepsilon_1\rangle + 1} \prod_{v \in A[2]} \exp\big((\textstyle\int_w\gamma_v)y_v\big)q^w\\
 &= \lambda_0 \frac{u^2}{\Delta(u^2)}\sum_{w \in K} u^{-\langle w, w - 2\hat\varepsilon_0\rangle} \prod_{v \in A[2]}\exp \big((\textstyle\int_w(\gamma_v)y_v\big)q^w \\
 &\quad + \lambda_1 \frac{u^2}{\Delta(u^2)}\sum_{w \in K} u^{-\langle w, w - 2\hat\varepsilon_1\rangle+1} \prod_{v \in A[2]}\exp \big((\textstyle\int_w(\gamma_v)y_v\big)q^w
\end{align*}
as claimed.
\end{proof}

\subsection{Application of the crepant resolution conjecture}\label{subsec_crc}

We now use apply the crepant resolution conjecture of \cite{crc} to compute the potential function of the orbifold $[A/\pm1]$. This conjecture is given as follows. 

\begin{conjecture}[{\cite[Conjecture 1.2]{crc}}]
Given an orbifold $\mathscr{X}$ satisfying the hard Lefschetz condition and admitting a crepant resolution $Y$, there exists a graded linear isomorphism
\[
L : H^*(Y) \to H^*_{orb}(\mathscr{X})
\]
and roots of unity $c_1, \ldots, c_r$ such that the following conditions hold.
\begin{enumerate}
\item The inverse of $L$ extends the map $\pi^* : H^*(\mathscr{X}) \to H^*(Y)$.
\item Regarding the potential function $F^Y$ as a power series in $y_0, \ldots, y_a$, and in $q_1, \ldots, q_s$, the coefficients admit analytic continuations from $(q_{s+1}, \ldots, q_r) = (0, \ldots, 0)$ to $(q_{s+1}, \ldots, q_r) = (c_{s+1}, \ldots c_r)$.
\item The potential functions $F^\mathscr{X}$ and $F^Y$ are equal after the substitution
\[
y_i = \sum_j L_i^j x_j \qquad \qquad q_i = \begin{cases} c_i & \text{for } i > s \\ c_it_i & \text{for } i \leq s \end{cases}
\]
\end{enumerate}
\end{conjecture}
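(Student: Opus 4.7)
The plan is to establish the Crepant Resolution Conjecture in the specific case $(\mathscr{X}, Y) = ([A/\pm 1], \Km(A))$ by producing each of the data it requires — the graded linear isomorphism $L$, the roots of unity $c_v$, the analytic continuation, and the equality of potentials — rather than invoking it as a black box.

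\textbf{Construction of $L$.} The orbifold cohomology of $[A/\pm 1]$ decomposes via Chen--Ruan as the untwisted sector $H^*(A)^{\Z/2}$ together with sixteen twisted sectors indexed by $A[2]$, each contributing a one-dimensional class $\mathbf{1}_v$ of age one. The cohomology of the resolution splits (rationally) as $p^*H^*(A/\pm 1) \oplus \bigoplus_v \langle \gamma_v\rangle$, where $\gamma_v$ is the Poincar\'e dual of $E_v$. I would define $L^{-1}$ to agree with the pullback $\pi^*$ on the untwisted sector and to send $\gamma_v$ to a suitable nonzero scalar multiple of $\mathbf{1}_v$, with the scalar fixed by matching Poincar\'e pairings on the two sides. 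The hard Lefschetz condition for $[A/\pm 1]$ is immediate from the descent of the polarization.

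\textbf{Roots of unity and analytic continuation.} Since every nontrivial stabilizer is $\Z/2$, the natural roots of unity attached to the exceptional variables $q^{E_v}$ are $c_v = -1$, while $q^S$, $q^F$, and the $y_v$ remain formal. Theorem \ref{thm_restricted_potential_km} gives $F$ as an overall modular factor $u^2/\Delta(u^2)$ times a theta-type sum over the Kummer lattice $K$. Via Corollary \ref{cor_kum_lat}, this sum splits as a finite sum indexed by cosets of $\Lambda$ in $K$ (equivalently, over $\Pi_3$), and on each coset the inner sum is an explicit theta series on the negative-definite lattice $\Lambda$ in the variables $q^{E_v}$ (weighted by $u$). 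Each such theta series converges on an appropriate polydisc and admits meromorphic continuation that is regular at the point $q^{E_v} = -1$ for all $v$. This yields a well-defined specialization of $F^Y$, the candidate for $L^* F^{\mathscr{X}}$.

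\textbf{Matching of potentials.} To verify that this specialization equals $F^{[A/\pm 1]}$, I would use Theorem \ref{thm_substitution}, which ties the orbifold potential to the enumerative counts $N_{\bk,n}$, together with the specialization $A \cong S \times F$ of Section \ref{sec_specialization}, which reduces the relevant orbifold Gromov--Witten invariants to Hurwitz counts of covers of $[F/\pm 1]$. This gives an independent computation of enough coefficients of $F^{[A/\pm 1]}$ to compare against the specialized $F^Y$. The main obstacle will be controlling the analytic continuation simultaneously in all sixteen variables $q^{E_v}$ and proving equality of the two resulting quasi-modular expressions; here I would exploit rigidity of the ring of quasi-modular forms of bounded weight and depth, pairing it with deformation invariance of Gromov--Witten invariants to extend the match from the split case to all polarized Abelian surfaces with primitive $c_1(L)^\vee$.
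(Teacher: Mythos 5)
The statement you set out to prove is not a theorem of the paper at all: it is a conjecture, quoted verbatim from \cite{crc}, and the paper contains no proof of it even in the special case $\Km(A) \to [A/\pm1]$. It is explicitly \emph{assumed} (see the hypothesis of Theorem \ref{thm_main} and the discussion in Section \ref{subsec_crc}), and the paper is careful to note that the conjecture as stated applies only to \emph{ordinary} Gromov--Witten invariants, whereas the whole computation uses the \emph{reduced} theory of the twistor family --- so what is actually invoked is an unproven extension of the conjecture, justified heuristically by the local model and the CY3-like fibrewise setting. Your proposal never engages with this reduced-vs-ordinary discrepancy. Moreover, in the paper the change of variables is not derived but \emph{fitted}: the factor of $i$ in $L(y_v) = iz_v$, the roots of unity $(-1)^w$, and the sign $q^{F/2} \mapsto -t^{F/2}$ are pinned down by matching the equivariantly proven local case $T^*\PP^1 \to [\C^2/\pm1]$ and G\"ottsche's genus-2 formula; your normalization of $L$ by ``matching Poincar\'e pairings'' cannot detect these phases, since any unimodular rescaling of the twisted-sector classes preserves the pairing.

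The matching step of your proposal also has a concrete logical gap. The specialization $A \cong S \times F$ together with Hurwitz counts of covers of $[F/\pm1]$ gives an independent computation only of the genus 1 and genus 2 coefficients (monomials of degree 4 and 6 in the $x_v$); this is exactly what Section \ref{sec_low_genera} does. For higher genus one would need all Hurwitz numbers of $[F/\pm1]$ with arbitrary odd stacky ramification profiles over the four orbifold points, which neither the paper nor your sketch computes. Your fallback --- ``rigidity of the ring of quasi-modular forms of bounded weight and depth'' --- is circular: to apply such rigidity you must know \emph{a priori} that the coefficients of $F^{[A/\pm1]}$ are quasi-modular with controlled weight and depth, but in this paper quasi-modularity of those coefficients is an \emph{output} of the conjecture (Theorem \ref{thm_main} together with Theorem \ref{thm_qmod}), not an independent input. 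Finitely many matched coefficients plus an unestablished modularity constraint do not determine the full potential, so the proposed verification does not close, and the statement remains, as in the paper, a conjecture that is assumed rather than proved.
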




It should be noted that the crepant resolution conjecture, as stated, only applies to ordinary Gromov-Witten invariants and not to reduced Gromov-Witten invariants as we use in our case. The short explanation is simply that it appears to work. The longer explanation is that our situation (dealing with fibre-wise Gromov-Witten invariants of a non-K\"ahler 3-fold) is similar enough to the CY3 case that it {\em should} work. Moreover, the local picture around the singular points is the same as that of the resolution $T^*\PP^1 \to [\C^2/\pm1]$, a case where the equivariant version of the crepant resolution conjecture has been proven \cite{crc}. The only issue is whether or not we can extant this globally to the orbifold $[A/\pm1]$, which appears to be the case.

As is usually the case, the change-of-variables is forced upon us by knowing a few of the invariants and choosing data to match those. In our case this comes from two sources. As stated above, we have a good understanding of the local picture around a singular point, so we can use that knowledge. Moreover, we already know the number of genus 2 curves in an Abelian surface due to G\"ottsche \cite{gottsche}. These two facts allow us to derive the full change-of-variables for the restricted potential function, which is given as follows.

The function $L : H^*\big(\Km(A)\big) \to H^*_{orb}([A/\pm1])$ used in the crepant resolution conjectures is the following. Let $y_v$ denote the formal cohomological variable corresponding to the dual of an exceptional divisor $E_v$, and let $z_v$ denote the variable corresponding to the class in the twisted sector of $[A/\pm1]$. Similarly, let $y_S$ and $y_F$ denote the formal variables corresponding to the classes $S$ and $F$ (with $z_S$ and $z_F$ downstairs), respectively. Then the map $L$ on the formal variables is given by
\[
L(y_v) = iz_v \qquad L(y_S) = z_S \qquad L(y_F) = z_F
\]
while the roots of unity are given by applying, for $w = \sum_{v \in A[2]}\tfrac{a_v}{2}E_v$
\[
q^w \mapsto (-1)^w := (-1)^{\sum_{v \in A[2]} \tfrac{a_v}{2}}
\]
and the substitutions
\[
q^{S/2} \mapsto t^{S/2} \qquad q^{F/2} \mapsto - t^{F/2}
\]
As in the statement of Theorem \ref{thm_restricted_potential_km}, we will continue with the equivalent substitution of
\[
u =  \big(\exp (z_F) t^F\big)^{1/2}.
\]
which yields that $u \mapsto -u$ under this substitution. All of this together yields the following.

\begin{proposition}
Assume the crepant resolution conjecture holds for the resolutions $\Km(A_n) \to [A_n/\pm1]$. Then the restricted genus 0 orbifold Gromov-Witten potential function for $[A/\pm1]$, when summed over all polarizations, is given by
\begin{multline*}
F^{[A/\pm1]} = \big(\exp(z_S)t^S\big)^{1/2}\frac{u^2}{\Delta(u^2)}\quad\cdot\\ \Big(\prod_{v\in\varepsilon_0} e^{-iz_v/2}\sum_{w \in K}u^{-\langle w, w - 2\hat\varepsilon_0\rangle}\prod_{v\in A[2]}\exp\big(i(\textstyle\int_w\gamma_v)z_v\big)(-1)^w \\
+ \prod_{v\in\varepsilon_1} e^{-iz_v/2}\sum_{w \in K}u^{-\langle w, w - 2\hat\varepsilon_1\rangle+1}\prod_{v\in A[2]}\exp\big(i(\textstyle\int_w\gamma_v)z_v\big)(-1)^w\Big).
\end{multline*}
where $(-1)^w := (-1)^{\sum_{v \in A[2]} \tfrac{a_v}{2}}$ for $w = \sum_{v \in A[2]}\tfrac{a_v}{2}E_v$.
\end{proposition}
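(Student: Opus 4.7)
The plan is to derive the proposition as a direct calculation: take the closed formula for the restricted potential $F$ on $\Km(A_n)$ produced in Theorem \ref{thm_restricted_potential_km}, and push each factor through the CRC substitutions $y_v\mapsto iz_v$, $y_S\mapsto z_S$, $y_F\mapsto z_F$, $q^{E_v}\mapsto (-1)^{E_v}$, $q^{S/2}\mapsto t^{S/2}$, $q^{F/2}\mapsto -t^{F/2}$. No new geometric input is required beyond Theorem \ref{thm_restricted_potential_km}; the statement is essentially a bookkeeping result once those substitutions are specified.

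First I would record how the auxiliary variables transform. Since $u=\bigl(\exp(y_F)q^F\bigr)^{1/2}$, the sign in $q^{F/2}\mapsto -t^{F/2}$ gives $u\mapsto -u$, while $u^2\mapsto u^2$ in the new variables. For $\lambda_i$, the factor $(\exp(y_S)q^S)^{1/2}$ becomes $(\exp(z_S)t^S)^{1/2}$ with no sign contribution, the cohomological factor $\prod_{v}\exp\bigl((\textstyle\int_{-\hat\varepsilon_i}\gamma_v)y_v\bigr)$ becomes $\prod_{v\in\varepsilon_i}e^{-iz_v/2}$ (using that $\gamma_v$ is dual to $E_v$ and $\hat\varepsilon_i=\tfrac12\sum_{v\in\varepsilon_i}E_v$), and the lattice factor $q^{-\hat\varepsilon_i}$ becomes $(-1)^{-\hat\varepsilon_i}=1$ because $\varepsilon_i\in\Pi_2$ has $|\varepsilon_i|=4$. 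This recovers the prefactor $(\exp(z_S)t^S)^{1/2}\prod_{v\in\varepsilon_i}e^{-iz_v/2}$ attached to each of the two summands in the proposition.

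Next I would transform the two sums term by term. The factor $\frac{u^2}{\Delta(u^2)}$ is invariant. Inside the sum, the exponential $\prod_v\exp\bigl((\textstyle\int_w\gamma_v)y_v\bigr)$ becomes $\prod_v\exp\bigl(i(\textstyle\int_w\gamma_v)z_v\bigr)$, and $q^w$ becomes $(-1)^w$ by the CRC prescription. It remains to analyze the $u$-powers $u^{-\langle w,w-2\hat\varepsilon_i\rangle+\delta_i}$ (with $\delta_0=0$, $\delta_1=1$) under $u\mapsto -u$. Assembling everything gives the formula stated in the proposition, provided the parity of $-\langle w,w-2\hat\varepsilon_i\rangle+\delta_i$ behaves correctly.

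The main obstacle is precisely this sign bookkeeping. I would verify that $K$ is an even lattice, so $\langle w,w\rangle\in 2\mathbb{Z}$ for every $w\in K$, and then check how $2\langle w,\hat\varepsilon_i\rangle=-\sum_{v\in\varepsilon_i}a_v$ interacts with the $q^w\mapsto(-1)^w=(-1)^{\sum a_v/2}$ specialization. The three cases of Corollary \ref{cor_kum_lat} (the coefficient set $r(w)$ is empty, an affine $3$-plane, or all of $A[2]$), together with the fact that $\varepsilon_i\in\Pi_2$ and the $\mathbb{F}_2$-affine geometry of $A[2]$, force the leftover sign $(-1)^{-\langle w,w-2\hat\varepsilon_i\rangle+\delta_i}$ to be absorbed into the already present $(-1)^w$ factor (or into the $\lambda_1$-shift when $i=1$). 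Once that combinatorial identity is established, the two displayed sums in the proposition are obtained termwise from the two sums in Theorem \ref{thm_restricted_potential_km}, completing the proof.
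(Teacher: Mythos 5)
Your overall route---pushing the closed formula of Theorem \ref{thm_restricted_potential_km} through the CRC substitutions term by term---is exactly the paper's proof, but your sign bookkeeping contains a genuine error. You assert that the lattice factor satisfies $q^{-\hat\varepsilon_i}\mapsto(-1)^{-\hat\varepsilon_i}=1$ because ``$\varepsilon_i\in\Pi_2$ has $|\varepsilon_i|=4$.'' That cardinality claim is false for $i=1$: $\Pi_2$ is the \emph{subgroup} of $\mathfrak{P}(A[2])$ generated by affine $2$-planes, and its elements need not themselves be $2$-planes. By construction $\varepsilon_1=\varepsilon_0+\varepsilon_1^*$ is the symmetric difference of two affine $2$-planes meeting only at $0$, so $|\varepsilon_1|=6$ (this is visible in the explicit labelling of Section \ref{sec_low_genera}, where $\hat\varepsilon_1=\tfrac12(E_1+E_2+E_3)+\tfrac12(E_4+E_8+E_{12})$), whence $q^{-\hat\varepsilon_1}\mapsto(-1)^{-3}=-1$. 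This minus sign is not optional. Indeed, since $K$ is even and $2\langle w,\hat\varepsilon_i\rangle=-\sum_{v\in\varepsilon_i}a_v$ with $\sum_{v\in\varepsilon_i}a_v\equiv|r(w)\cap\varepsilon_i|\equiv 0\pmod 2$ for every $w\in K$ (an affine $3$-plane meets an affine $2$-plane in $0$, $2$, or $4$ points over $\F_2$, and $|\eta\cap\varepsilon_1|\equiv|\eta\cap\varepsilon_0|+|\eta\cap\varepsilon_1^*|\pmod 2$; the cases $r(w)=\emptyset$ and $r(w)=A[2]$ are trivially even), the substitution $u\mapsto -u$ leaves exactly the sign $(-1)^{\delta_i}$ on each sum, i.e.\ $-1$ on the odd-polarization sum, and the sign $(-1)^{-\hat\varepsilon_1}=-1$ coming from $\lambda_1$ is precisely what cancels it. With your value $+1$, your derivation yields the second summand with an overall minus sign, contradicting the proposition. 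This cancellation is what the paper's one-line proof records when it writes $\lambda_1\mapsto-\big(\exp(z_S)t^S\big)^{1/2}\prod_{v\in\varepsilon_1}e^{iz_v/2}$. (Your $e^{-iz_v/2}$ in the $\lambda_i$-prefactor, incidentally, matches the proposition as stated; the sign in the exponent in the paper's displayed transformation appears to be a typo.)

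A secondary weakness: your closing claim that the leftover sign $(-1)^{-\langle w,w-2\hat\varepsilon_i\rangle+\delta_i}$ is ``absorbed into the already present $(-1)^w$ factor (or into the $\lambda_1$-shift when $i=1$)'' is not a proof, and the first alternative is not even available---the factor $(-1)^w$ appears identically in both the source and target formulas and cannot be redefined to soak up extra signs. The actual content is the identity $(-1)^{\langle w,w-2\hat\varepsilon_i\rangle}=1$ for all $w\in K$, which you gesture at via Corollary \ref{cor_kum_lat} but never establish; the $\F_2$-affine intersection count above supplies it. Note also that the parenthetical ``$\lambda_1$-shift'' contradicts your own earlier claim that the $\lambda_1$ lattice factor contributes $+1$. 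Once the correct value $(-1)^{-\hat\varepsilon_1}=-1$ and the evenness of $|r(w)\cap\varepsilon_i|$ are in place, your termwise computation does close up and coincides with the paper's argument.
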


\begin{proof}
If we combine all of the transformations given above, we find that the remaining composite terms in the potential transform as
\[
\lambda_0 \mapsto \big(\exp(z_S)t^S\big)^{1/2}\prod_{v \in \varepsilon_0} e^{iz_v/2} \qquad\qquad \lambda_1 \mapsto -\big(\exp(z_S)t^S\big)^{1/2}\prod_{v \in \varepsilon_1} e^{iz_v/2}
\]
This yields the result as claimed.
\end{proof}

\begin{remark}
We should remark that for simplicity, we will omit the term $\big(\exp(z_S)t^S\big)^{1/2}$ occurring at the beginning of the expression, which contributes no further enumerative information.
\end{remark}

To simplify this, we recall that from corollary \ref{cor_kum_lat} we can write any element $\bar{w} \in K$ as $\bar{w} = w + \hat\eta$ for $w \in \Lambda$ and $\eta \in \Pi_3$. This allows us to replace the summation over $K$ by a double summation over $\Lambda$ and over $\Pi_3$. More precisely, let $\eta \in \Pi_3$, and (noting that $(-1)^{\hat\eta} = 1$ by the definition above) define
\begin{align*}
F^{[A/\pm1]}_\eta &= \frac{u^2}{\Delta(u^2)}\Big(\prod_{v\in\varepsilon_0} e^{-iz_v/2}\sum_{w \in \Lambda}u^{-\langle w+\hat\eta, w+\hat\eta - 2\hat\varepsilon_0\rangle}\quad\cdot \\
& \qquad\qquad\qquad\qquad \prod_{v\in A[2]}\exp\big(i(\textstyle\int_{w+\hat\eta}\gamma_v)z_v\big)(-1)^w \\
&\quad + \prod_{v\in\varepsilon_1} e^{-iz_v/2}\sum_{w \in \Lambda}u^{-\langle w+\hat\eta, w+\hat\eta - 2\hat\varepsilon_1\rangle+1} \quad\cdot\\
&\qquad \qquad\qquad\qquad\prod_{v\in A[2]}\exp\big(i(\textstyle\int_{w+\hat\eta}\gamma_v)z_v\big)(-1)^w\Big).
\end{align*}
Then we have that $F^{[A/\pm1]} = \sum_{\eta \in \Pi_3} F^{[A/\pm1]}_\eta$. Since the intersection form restricted to $\Lambda$ is diagonal, the functions $F^{[A/\pm1]}_\eta$ can now be computed. For simplicity, the superscript $[A/\pm1]$ will now be omitted.

\begin{lemma}\label{lem_f_eta}
The function $F_\eta$ can be written as
\begin{multline*}
F_\eta = \frac{u^2}{\Delta(u^2)}\Big(u^{\tfrac{1}{2}|\eta + \varepsilon_0|-2}\prod_{v \in \eta + \varepsilon_0}h(z_v,u)\prod_{v\notin\eta + \varepsilon_0}g(z_v, u) \\
+ u^{\tfrac{1}{2}|\eta + \varepsilon_1|-2}\prod_{v \in \eta + \varepsilon_1}h(z_v,u)\prod_{v\notin\eta + \varepsilon_1}g(z_v, u)\Big)
\end{multline*}
where
\begin{gather*}
h(z,u) = 2\sum_{k=0}^\infty (-1)^k\sin\big((2k+1)\frac{z}{2}\big)u^{2k^2+2k}\\
g(z,u) = 1+ 2\sum_{k=1}^\infty (-1)^k\cos(kz)u^{2k^2}.
\end{gather*}
\end{lemma}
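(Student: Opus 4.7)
The plan is to exploit the fact that the intersection form on $\Lambda$ is diagonal, $\langle E_v, E_w\rangle = -2\delta_{vw}$, so that the sum over $w \in \Lambda$ in $F_\eta$ factors as a product over the sixteen points $v \in A[2]$. Concretely, write $w = \sum_v b_v E_v$ with $b_v \in \Z$ and set $c_v = b_v + \tfrac{1}{2}\mathbf{1}_\eta(v) = \int_{w + \hat\eta}\gamma_v$. Then $-\langle w + \hat\eta, w + \hat\eta - 2\hat\varepsilon_i\rangle = \sum_v Q_i(v)$ with $Q_i(v) = 2c_v^2$ for $v \notin \varepsilon_i$ and $Q_i(v) = 2c_v(c_v - 1)$ for $v \in \varepsilon_i$, while the phase $\prod_v e^{ic_v z_v}$ and the sign $(-1)^w = \prod_v (-1)^{b_v}$ are automatically products. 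Thus each of the two inner sums in $F_\eta$ factors as $\prod_{v\in A[2]} P_v^{(i)}(z_v, u)$ for one-variable series $P_v^{(i)}$ whose form depends only on the pair $(\mathbf{1}_\eta(v), \mathbf{1}_{\varepsilon_i}(v))$.

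I would then evaluate the four possible shapes of $P_v^{(i)}$ by a symmetry-pairing trick. When $\mathbf{1}_\eta(v) = \mathbf{1}_{\varepsilon_i}(v) = 0$, pair $b \leftrightarrow -b$ so the phases collapse into cosines and the sum equals $g(z_v, u)$. When $\mathbf{1}_\eta(v) = \mathbf{1}_{\varepsilon_i}(v) = 1$, factoring out $u^{-1/2}e^{iz_v/2}$ reduces the sum to the same series, and the $e^{iz_v/2}$ is cancelled by the prefactor $e^{-iz_v/2}$ attached to every $v \in \varepsilon_i$, leaving $u^{-1/2}g(z_v, u)$. For $v \in \eta \setminus \varepsilon_i$, pairing $b \leftrightarrow -b-1$ sends $c_v \mapsto -c_v$ and collapses the phases into sines, producing $iu^{1/2}h(z_v, u)$. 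For $v \in \varepsilon_i \setminus \eta$, pairing $b \leftrightarrow 1 - b$ also produces sines, and after absorbing $e^{-iz_v/2}$ gives $-ih(z_v, u)$. The vertices contributing $h$ rather than $g$ are precisely those of the symmetric difference $\eta + \varepsilon_i$, giving the product structure stated in the lemma.

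Finally I would collect the overall scalar. Writing $n_B = |\eta \cap \varepsilon_i|$, $n_C = |\eta \setminus \varepsilon_i|$, $n_D = |\varepsilon_i \setminus \eta|$, the cumulative $u$-power out of Step 2 is $u^{(n_C - n_B)/2} = u^{(|\eta + \varepsilon_i| - |\varepsilon_i|)/2}$. Since $\varepsilon_0$ is a single affine $2$-plane while $\varepsilon_1 = \varepsilon_0 + \varepsilon_1^*$ is the symmetric difference of two $2$-planes sharing only the origin, $|\varepsilon_0| = 4$ and $|\varepsilon_1| = 6$, so the two summands pick up $u^{\frac{1}{2}|\eta + \varepsilon_0| - 2}$ and $u^{\frac{1}{2}|\eta + \varepsilon_1| - 3}$, respectively; the explicit $u^{+1}$ shift appearing in the $i = 1$ summand of the original expression for $F_\eta$ compensates exactly to produce the uniform exponent $\tfrac{1}{2}|\eta + \varepsilon_i| - 2$ claimed. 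The main obstacle is the sign bookkeeping: the complex phases from cases $\eta \setminus \varepsilon_i$ and $\varepsilon_i \setminus \eta$ combine to $i^{n_C}(-i)^{n_D} = i^{|\eta| - |\varepsilon_i|}$, and one must carefully track how these scalars interact with the $u^{+1}$ shift, noting in particular that $|\eta| \in \{0, 8, 16\}$ is always even and that $|\varepsilon_0|$ and $|\varepsilon_1|$ lie in different residues mod $4$.
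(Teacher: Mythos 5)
Your factorization over the sixteen points and all four per-vertex evaluations are correct, and they follow essentially the same route as the paper's proof (diagonality of the form on $\Lambda$, reindexing the lattice sums, converting to sines and cosines); in fact your bookkeeping is more careful than the paper's, whose displayed manipulations silently drop the factor $(-1)^{|\varepsilon_i\setminus\eta|}$ produced by the reindexing $\prod_{v\in\varepsilon_i\setminus\eta}(-e^{iz_v})$, and whose sine identity is missing both the factor $i$ and the alternating sign $(-1)^k$ (the correct statement is $e^{iz/2}\sum_{k\in\Z}(-1)^k u^{2k^2+2k}e^{ikz} = i\,h(z,u)$, exactly as your pairings give). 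Your $u$-power bookkeeping $u^{(n_C-n_B)/2}=u^{(|\eta+\varepsilon_i|-|\varepsilon_i|)/2}$, with the explicit $u^{+1}$ shift restoring the uniform exponent $\tfrac12|\eta+\varepsilon_i|-2$, also agrees with the paper's computation via $\langle\hat\eta,\hat\eta\rangle-2\langle\hat\eta,\hat\varepsilon_0\rangle=-\tfrac12|\eta+\varepsilon_0|+2$.

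The genuine gap is that you never close the phase computation, and it cannot be closed the way you hope. You correctly find the residual scalar $i^{n_C}(-i)^{n_D}=i^{|\eta|-|\varepsilon_i|}$; since $|\eta|\in\{0,8,16\}$ is divisible by $4$, this is $+1$ for the $\varepsilon_0$ summand but $i^{-6}=-1$ for the $\varepsilon_1$ summand. No interaction with the $u^{+1}$ shift can repair this: by the time the proposition's formula for $F^{[A/\pm1]}$ is written down, all sign-bearing substitutions ($u\mapsto -u$, $q^w\mapsto(-1)^w$, the transformation of $\lambda_1$) have already been performed, and the $+1$ in the exponent is a bare power of $u$ carrying no sign. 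So your argument, carried to completion, proves the lemma with a minus sign on the $\varepsilon_1$ term, contradicting the stated $+$. The discrepancy is in fact inherited from the printed statement of the preceding proposition rather than from your work: its own proof derives $\lambda_1\mapsto -(\exp(z_S)t^S)^{1/2}\prod_{v\in\varepsilon_1}e^{iz_v/2}$, yet the displayed formula carries a plus, and the $w$-dependent parity factor $(-1)^{\sum_{v\in\varepsilon_i}a_v}$ created when $u\mapsto -u$ acts on $u^{-\langle w,w-2\hat\varepsilon_i\rangle}$ (equal to $(-1)^{|\eta\cap\varepsilon_i|}$ on the coset $w\in\Lambda+\hat\eta$) is likewise not displayed. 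A complete proof must either redo the crepant-resolution substitution with these signs tracked, or anchor the overall sign of the $\varepsilon_1$ summand externally; the paper's independent genus-two check, where the coefficient of $\prod_{v\in\varepsilon_1}x_v$ is verified to be $E(u)$ with positive coefficients, confirms that $+$ is the correct sign, but your proposal as written does not establish it.
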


\begin{proof}
Throughout this proof we will only work with the terms involving $\varepsilon_0$; all of the proofs for $\varepsilon_1$ are nearly identical. We begin by noting that
\[
\langle w + \hat\eta, w + \hat\eta - 2\hat\varepsilon_0\rangle = \langle w, w + 2\hat\eta - 2\hat\varepsilon_0\rangle + \langle \hat\eta, \hat\eta \rangle - 2\langle \hat\eta, \hat\varepsilon_0\rangle.
\]
Since $|\varepsilon_0| = 4$, and for any two subsets $\eta_1, \eta_2 \in \Pi_0$, we have $\langle \hat\eta_1, \hat\eta_2 \rangle = -\tfrac{1}{2}|\eta_1 \cap \eta_2|$ (see remark \ref{rmk_int_form}), it follows that
\begin{align*}
\langle \hat\eta, \hat\eta \rangle - 2\langle \hat\eta, \hat\varepsilon_0\rangle 
&= \langle \hat\eta, \hat\eta \rangle - 2\langle \hat\eta, \hat\varepsilon_0\rangle + \langle \hat\varepsilon_0, \hat\varepsilon_0\rangle + 2 \\
&= -\frac{1}{2}\big(|\eta| - 2 |\eta \cap \varepsilon_0| + |\varepsilon_0|\big) + 2 \\
&= -\frac{1}{2}|\eta + \varepsilon_0| + 2
\end{align*}
where we recall that the summation is done in $\Pi_0$ (and so is the symmetric difference of the two sets).

We work now to simplify the expression
\[
\prod_{v\in\varepsilon_0} e^{-iz_v/2}\sum_{w \in \Lambda}u^{-\langle w, w+2\hat\eta - 2\hat\varepsilon_0\rangle}\prod_{v\in A[2]}\exp\big(i(\textstyle\int_{w+\hat\eta}\gamma_v)z_v\big)(-1)^w.
\]
To begin with, we note that $\exp\big(i(\int_{\hat\eta}\gamma_v)z_v\big) = \exp(iz_v/2)$ if $v \in \eta$ (and is 1, otherwise), and so this term is
\[
\prod_{v\in\varepsilon_0\setminus\eta} e^{-iz_v/2}\prod_{v\in\eta\setminus\varepsilon_0}e^{iz_v/2}\sum_{w \in \Lambda}u^{-\langle w, w+2\hat\eta - 2\hat\varepsilon_0\rangle}\prod_{v \in A[2]}\exp\big(i(\textstyle\int_w\gamma_v)z_v\big)(-1)^w
\]
Because of the fact that the intersection form restricted to $\Lambda$ is diagonal, we can write the latter sum as a product
\begin{align*}
\sum_{w \in \Lambda}u^{-\langle w, w+2\hat\eta - 2\hat\varepsilon_0\rangle}\prod_{v\in A[2]}\exp\big(i(\textstyle\int_w\gamma_v)z_v\big)(-1)^w
&= \prod_{v \in \eta\setminus\varepsilon_0}\sum_{k=-\infty}^\infty (-1)^ku^{2k^2+2k}e^{ikz_v}
\\
&\quad \cdot \prod_{v \in \varepsilon_0\setminus\eta}\sum_{k=-\infty}^\infty (-1)^ku^{2k^2-2k}e^{ikz_v} \\
&\quad \cdot \prod_{v \notin \eta + \varepsilon_0} \sum_{k=-\infty}^\infty (-1)^ku^{2k^2}e^{ikz_v}.
\end{align*}
By rearranging the summation index on the middle product it can be written as
\[
\prod_{v \in \varepsilon_0\setminus\eta}\sum_{k=-\infty}^\infty (-1)^ku^{2k^2-2k}e^{ikz_v} = \prod_{\varepsilon_0\setminus\eta}\big(-e^{iz_v}\big)\sum_{k=-\infty}^\infty(-1)^ku^{2k^2+2k}e^{ikz_v}
\]
and so the whole term becomes
\begin{multline*}
\prod_{v\in\varepsilon_0\setminus\eta} e^{-iz_v/2}\prod_{v\in\eta\setminus\varepsilon_0}e^{iz_v/2}\sum_{w \in \Lambda}u^{-\langle w, w+2\hat\eta - 2\hat\varepsilon_0\rangle}\prod_{v\in A[2]}\exp\big(i(\textstyle\int_w\gamma_v)z_v\big)(-1)^w \\
= \prod_{v\in\eta+\varepsilon_0} e^{iz_v/2}\sum_{k=-\infty}^\infty (-1)^ku^{2k^2+2k}e^{ikz_v}\prod_{v\notin\eta+\varepsilon_0}\sum_{k=-\infty}^\infty (-1)^ku^{2k^2}e^{ikz_v}
\end{multline*}

Since we have that
\[
\sum_{k=-\infty}^\infty (-1)^ku^{2k^2}e^{ikz} = 1+2\sum_{k=1}^\infty (-1)^k\cos(kz)u^{2k^2}
\] 
and also that
\[
e^{iz/2}\sum_{k=-\infty}^\infty (-1)^ku^{2k^2+2k}e^{ikz} = 2\sum_{k=0}^\infty \sin\big((2k+1)z/2\big)u^{2k^2+2k}
\]
the conclusion follows.
\end{proof}

\subsection{Proof of Theorem \ref{thm_main}}\label{subsec_pf}

To prove Theorem \ref{thm_main} it remains to study the functions $h$ and $g$ given above. As they are written in terms of trigonometric functions, we begin by writing them in terms of Chebyshev polynomials.

Recall that Chebyshev polynomials $T_n(x)$ are given by the relationship
\[
T_n(\cos\theta) = \cos(n\theta).
\]
They can equivalently be defined recursively via
\begin{gather}
T_0(x) = 1 \notag\\
T_1(x) = x \notag\\
T_n(x) = 2xT_{n-1}(x) - T_{n-2}(x). \label{cheb_recurrence}
\end{gather}
They also satisfy the relationships
\[
T_{2n+1}(\sin\theta) = (-1)^n\sin\big((2n+1)\theta\big)
\]
and
\[
T_n(1-2x^2) = (-1)^nT_{2n}(x).
\]
With these last two in mind, we have the following fact.

\begin{proposition}\label{prop_h_g}
The functions $h(x,u)$ and $g(x,u)$ can be written as
\begin{gather*}
h(z,u) = 2\sum_{n=0}^\infty T_{2n+1}\big(\sin(z/2)\big)u^{2n^2+2n} \\
g(z,u) = 1 + 2\sum_{n=1}^\infty T_{2n}\big(\sin(z/2)\big)u^{2n^2}
\end{gather*}
\end{proposition}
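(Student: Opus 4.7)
The proof is essentially a direct substitution using the two Chebyshev identities quoted just before the proposition. The plan is to rewrite each of the trigonometric series from Lemma \ref{lem_f_eta} term-by-term, using one identity for the sine series $h$ and the other for the cosine series $g$.

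For $h(z,u)$, I would start from the formula
\[
h(z,u) = 2\sum_{k=0}^\infty (-1)^k \sin\bigl((2k+1)z/2\bigr)u^{2k^2+2k}
\]
and apply the identity $T_{2k+1}(\sin\theta) = (-1)^k \sin\bigl((2k+1)\theta\bigr)$ with $\theta = z/2$. This gives $(-1)^k \sin\bigl((2k+1)z/2\bigr) = T_{2k+1}(\sin(z/2))$, and the desired form follows immediately by relabeling $k$ as $n$.

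For $g(z,u)$, the route is slightly longer: I need to rewrite $(-1)^k\cos(kz)$ in terms of $T_{2k}(\sin(z/2))$. First, $\cos(kz) = T_k(\cos z)$ by the defining property of Chebyshev polynomials. Then I use the half-angle identity $\cos z = 1 - 2\sin^2(z/2)$ to write $\cos(kz) = T_k\bigl(1 - 2\sin^2(z/2)\bigr)$. Finally, applying $T_k(1-2x^2) = (-1)^k T_{2k}(x)$ with $x = \sin(z/2)$ yields $\cos(kz) = (-1)^k T_{2k}(\sin(z/2))$, i.e.\ $(-1)^k\cos(kz) = T_{2k}(\sin(z/2))$. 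Substituting into the series for $g$ from Lemma \ref{lem_f_eta} gives the claim.

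Since both identities required are already stated in the paper just above the proposition, no genuine obstacle arises; the proof amounts to recognizing that the coefficients of the power series in $u$ are exactly of the form $T_{2n+1}(\sin(z/2))$ and $T_{2n}(\sin(z/2))$ respectively. The only care point is keeping track of the sign $(-1)^k$ absorbed into the Chebyshev polynomial in each case.
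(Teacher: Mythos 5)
Your proof is correct and is precisely the argument the paper intends: the paper states the two Chebyshev identities $T_{2n+1}(\sin\theta) = (-1)^n\sin\bigl((2n+1)\theta\bigr)$ and $T_n(1-2x^2) = (-1)^nT_{2n}(x)$ immediately beforehand and offers the proposition as a direct consequence, leaving the substitution (including the half-angle step $\cos z = 1-2\sin^2(z/2)$ and the sign bookkeeping you carry out) to the reader. There is no gap and no divergence from the paper's route.
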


Proposition \ref{prop_h_g} and Lemma \ref{lem_f_eta} give us an explicit description for the (relevant portion of the) potential function of the orbifold $[A/\pm1]$. We now perform the substitution $x_v = 2\sin(z_v/2)$ (See Theorem \ref{thm_substitution}), and use the following result from \cite{andrews_rose}.

\begin{theorem}\label{thm_andrews_rose}
Let $H(x,q)$ be defined as $2\sum_{n=0}^\infty T_{2n+1}(x/2)q^{n^2+n}$ and let $G(x,q)$ be defined as $1 + 2\sum_{n=1}^\infty T_{2n}(x/2)q^{n^2}$. Then
\begin{gather*}
H(x,q) = (q^2;q^2)^3_\infty \sum_{k=0}^\infty A_k(q^2) x^{2k+1} \\
G(x,q) = \frac{(q;q)_\infty}{(-q;q)_\infty}\sum_{k=0}^\infty C_k(q)x^{2k}
\end{gather*}
where $(a;q)_\infty = \prod_{k=0}^\infty (1 - aq^k)$ is a $q$-Pockhammer symbol, and $A_k, C_k$ are MacMahon's generalized sum-of-divisors functions (See \cite{macmahon}).
\end{theorem}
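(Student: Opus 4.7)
The plan is to recognize $H(x,q)$ and $G(x,q)$ as classical Jacobi theta functions in disguise, apply the Jacobi triple product to obtain infinite product expansions, and then expand those products as elementary symmetric functions to match MacMahon's series.

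First I would undo the substitution by writing $x = 2\sin(w)$, so that $x/2 = \sin(w)$. Using $\sin(w) = \cos(\pi/2 - w)$ together with the defining property $T_n(\cos\theta) = \cos(n\theta)$, one obtains $T_{2n}(\sin w) = (-1)^n\cos(2nw)$, and the identity $T_{2n+1}(\sin w) = (-1)^n\sin((2n+1)w)$ already recorded in the paper. Substituting these into the definitions yields
\begin{gather*}
H(2\sin w,q) = 2\sum_{n\ge 0} (-1)^n q^{n(n+1)}\sin\bigl((2n+1)w\bigr), \\
G(2\sin w,q) = 1 + 2\sum_{n\ge 1}(-1)^n q^{n^2}\cos(2nw),
\end{gather*}
which I recognize as $\theta_1(w,q)/q^{1/4}$ and $\theta_4(w,q)$ respectively (in the standard convention where $\theta_1(w,q) = 2q^{1/4}\sum_{n\ge 0}(-1)^n q^{n(n+1)}\sin((2n+1)w)$ and $\theta_4(w,q) = 1 + 2\sum_{n\ge 1}(-1)^n q^{n^2}\cos(2nw)$).

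Next I would invoke the Jacobi triple product in the product forms
\begin{gather*}
\theta_1(w,q) = 2q^{1/4}\sin(w)\prod_{n\ge 1}(1-q^{2n})\bigl(1-2q^{2n}\cos(2w) + q^{4n}\bigr), \\
\theta_4(w,q) = \prod_{n\ge 1}(1-q^{2n})\bigl(1-2q^{2n-1}\cos(2w) + q^{4n-2}\bigr),
\end{gather*}
and then substitute back using $\cos(2w) = 1 - 2\sin^2 w = 1 - x^2/2$. The quadratic expressions collapse to $(1-q^{2n})^2 + q^{2n}x^2$ and $(1-q^{2n-1})^2 + q^{2n-1}x^2$ respectively. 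Pulling out the ``constant-in-$x$'' factors $(1-q^{2n})^2$ and $(1-q^{2n-1})^2$, and rewriting the remaining $q$-products via $(q;q)_\infty/(-q;q)_\infty = (q;q^2)_\infty^2(q^2;q^2)_\infty$, I would arrive at
\begin{gather*}
H(x,q) = x\,(q^2;q^2)^3_\infty \prod_{n\ge 1}\Bigl(1 + \frac{q^{2n}}{(1-q^{2n})^2}x^2\Bigr), \\
G(x,q) = \frac{(q;q)_\infty}{(-q;q)_\infty}\prod_{n\ge 1}\Bigl(1 + \frac{q^{2n-1}}{(1-q^{2n-1})^2}x^2\Bigr).
\end{gather*}

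Finally, I would expand each infinite product as a formal power series in $x^2$. Writing $b_n = q^{2n}/(1-q^{2n})^2$ and $a_n = q^{2n-1}/(1-q^{2n-1})^2$, the coefficient of $x^{2k}$ is the elementary symmetric polynomial $e_k$ in the $b_n$'s (respectively $a_n$'s), which is precisely $\sum_{0<m_1<\cdots<m_k}\prod_j b_{m_j}$ and $\sum_{0<m_1<\cdots<m_k}\prod_j a_{m_j}$. Matching the exponents of $q$ one checks directly that these coincide with $A_k(q^2)$ and $C_k(q)$ respectively, completing the proof. The main obstacle is bookkeeping: correctly tracking the $q^{1/4}$ and $2\sin w$ prefactors from $\theta_1$ so that they combine into the single factor of $x\,(q^2;q^2)^3_\infty$, and verifying the $(q;q)_\infty/(-q;q)_\infty$ identity for the $\theta_4$ side; once these are in place, the rest is a direct symmetric-function identification.
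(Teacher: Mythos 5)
Your proposal is correct: every step checks out (the parity identities $T_{2n+1}(\sin w)=(-1)^n\sin((2n+1)w)$ and $T_{2n}(\sin w)=(-1)^n\cos(2nw)$, the collapse of the triple-product factors to $(1-q^{2n})^2+q^{2n}x^2$ and $(1-q^{2n-1})^2+q^{2n-1}x^2$, the identity $(q;q)_\infty/(-q;q)_\infty=(q;q^2)_\infty^2(q^2;q^2)_\infty$, and the elementary-symmetric-function expansion matching the stated series for $A_k(q^2)$ and $C_k(q)$). Note that the paper itself gives no proof, quoting the result from \cite{andrews_rose}; your Jacobi-triple-product argument is essentially the argument of that reference, where the symmetric-function step is close to definitional since MacMahon introduced $A_k$ and $C_k$ precisely as coefficients of such infinite products.
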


\begin{theorem}\label{thm_qmod}
The functions $A_k$ and $C_k$ satisfy the recursive definitions
\begin{gather*}
A_1(q) = \sum_{k=1}^\infty \sigma_1(k)q^k \\
C_1(q) = A_1(q) - A_1(q^2) \\
A_k(q) = \frac{1}{(2k+1)2k}\Big(\big(6A_1(q) + k(k-1)\big)A_{k-1}(q) - 2q\frac{d}{dq}A_{k-1}(q)\Big) \\
C_k(q) = \frac{1}{2k(2k-1)}\Big(\big(2C_1(q) + (k-1)^2\big)C_{k-1}(q) - q\frac{d}{dq}C_{k-1}(q)\Big)
\end{gather*}
and as such are in the ring of quasi-modular forms.
\end{theorem}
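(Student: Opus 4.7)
The plan is to establish the base cases directly from the series definitions, derive the differential recursions by a combinatorial manipulation relating $q \tfrac{d}{dq} A_{k-1}$ and the product $A_1 \cdot A_{k-1}$ back to $A_k$, and then conclude quasi-modularity by invoking closure of the ring of quasi-modular forms under $q \tfrac{d}{dq}$.

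First I would verify the base cases. For $A_1$, the geometric expansion $\tfrac{q^m}{(1-q^m)^2} = \sum_{j \geq 1} j q^{jm}$ followed by interchanging the order of summation yields $A_1(q) = \sum_{n \geq 1} \sigma_1(n) q^n$, which equals $(1 - E_2(q))/24$. For $C_1$, the defining sum runs over the odd positive integers $2m-1$; splitting the defining sum of $A_1$ according to the parity of its summation index gives $C_1(q) = A_1(q) - A_1(q^2)$ at once.

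Next I would attack the differential recursions. The key identity is $q \tfrac{d}{dq}(1-q^m)^{-1} = \tfrac{m q^m}{(1-q^m)^2}$, which applied termwise to $A_{k-1}$ introduces the linear weights $m_1 + \cdots + m_{k-1}$ inside the sum. Meanwhile, expanding the product $A_1(q) \cdot A_{k-1}(q)$ and partitioning the outer summation index $m$ according to whether $m < m_1$, $m = m_i$ for some $i$, $m_i < m < m_{i+1}$, or $m > m_{k-1}$ produces $A_k$ together with derivatives of $A_{k-1}$ and polynomial corrections from the diagonal contributions. Solving the resulting linear relation for $A_k$ yields the stated formula; the combinatorial bookkeeping needed to isolate $A_k$ with the precise scalar $1/(2k(2k+1))$ and the linear coefficient $6 A_1 + k(k-1)$ is the main technical obstacle. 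The proof of the $C_k$ recursion proceeds in parallel, restricted to odd indices throughout.

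Finally, quasi-modularity follows by induction on $k$. The base case $A_1 = (1 - E_2)/24$ lies in the ring of quasi-modular forms for $SL_2(\Z)$. For the inductive step, the recursion expresses $A_k$ as a polynomial in $A_1$, $A_{k-1}$, and $q \tfrac{d}{dq} A_{k-1}$; by Ramanujan's identities the ring of quasi-modular forms is closed under $q \tfrac{d}{dq}$, so $A_k$ is quasi-modular by induction. The analogous argument for $C_k$ works in the ring of quasi-modular forms of level $\Gamma_0(2)$, since $A_1(q^2)$ lies there.
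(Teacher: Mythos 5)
Your base cases and your closing induction (quasi-modularity via closure of the ring under $q\frac{d}{dq}$, with level $\Gamma_0(2)$ for the $C_k$) are fine, but the heart of the theorem --- the two differential recursions --- is exactly where your proposal has a genuine gap, and you flagged it yourself. Splitting the product $A_1(q)A_{k-1}(q)$ into off-diagonal and diagonal parts does give the off-diagonal part as $kA_k(q)$, but the diagonal contributions carry factors $q^{2m_i}/(1-q^{m_i})^4$, and these are not reachable from $A_{k-1}$ and $q\frac{d}{dq}A_{k-1}$ by rearrangement: differentiation produces the differently shaped weighted factor $m_i q^{m_i}(1+q^{m_i})/(1-q^{m_i})^3$. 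Already at $k=2$, where the diagonal is $\sum_m q^{2m}/(1-q^m)^4 = \frac{1}{6}\sum_N\bigl(\sigma_3(N)-\sigma_1(N)\bigr)q^N$, the stated recursion is \emph{equivalent} to the Besge--Glaisher convolution identity
\[
\sum_{a+b=N}\sigma_1(a)\sigma_1(b) = \tfrac{1}{12}\bigl(5\sigma_3(N)+(1-6N)\sigma_1(N)\bigr),
\]
i.e.\ to Ramanujan's differential equation for $E_2$ --- a substantive quasi-modular (or at least Liouville-type elementary) input, not bookkeeping. For $k\geq 3$ the diagonal sums are multi-indexed, and you give no mechanism for the resulting relation to close in the four expressions $A_k$, $A_{k-1}$, $A_1A_{k-1}$, $q\frac{d}{dq}A_{k-1}$, nor for the scalars $1/\bigl((2k+1)2k\bigr)$ and $6A_1+k(k-1)$ to emerge. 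As written, the recursions are asserted rather than proved.

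For comparison: the paper does not prove this theorem internally but imports it from \cite{andrews_rose}, and the proof there runs through the generating functions of Theorem \ref{thm_andrews_rose} rather than through series manipulation. Under $x = 2\sin(z/2)$ the series $H$ becomes a Jacobi theta function, which satisfies a heat equation of the form $\partial_z^2 H = -\bigl(q\partial_q + \tfrac14\bigr)H$; rewriting $\partial_z^2 = \bigl(1-\tfrac{x^2}{4}\bigr)\partial_x^2 - \tfrac{x}{4}\partial_x$ and extracting the coefficient of $x^{2k+1}$ produces exactly the factor $(2k+1)2k$ from $\partial_x^2$, the constant $k(k-1)$ from the $-\tfrac{(2k-1)^2}{4}-\tfrac14$ terms, the summand $6A_1$ from $q\frac{d}{dq}\log (q^2;q^2)_\infty^3 = -6A_1(q^2)$ acting on the Pochhammer prefactor, and the $2$ in $2q\frac{d}{dq}$ from the change of nome $q^2\mapsto q$; the $C_k$ recursion comes from $G$ in the same way. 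To salvage your route you would in effect have to prove, for every $k$, a family of convolution identities that is the content of the theorem itself, so the heat-equation argument is not an optional shortcut but the missing idea.
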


We are now in a position to prove theorem \ref{thm_main}.

\begin{proof}[Proof of theorem \ref{thm_main}]
From Lemma \ref{lem_f_eta}, Proposition \ref{prop_h_g}, and Theorem \ref{thm_andrews_rose}, we can write
\begin{align*}
F_\eta &= \frac{u^2}{\Delta(u^2)}\Big(u^{\tfrac{1}{2}|\eta +\varepsilon_0|-2}\prod_{v\in\eta+\varepsilon_0}(u^4;u^4)_\infty^3\sum_{k=0}^\infty A_k(u^4)x_v^{2k+1} \quad\cdot\\
&\qquad\qquad\qquad\qquad\qquad\qquad\prod_{v\notin \eta + \varepsilon_0} \frac{(u^2;u^2)_\infty}{(-u^2;u^2)_\infty}\sum_{k=0}^\infty C_k(u^2)x_v^{2k} \\
 &\quad + u^{\tfrac{1}{2}|\eta + \varepsilon_1|-2}\prod_{v\in\eta+\varepsilon_1}(u^4;u^4)_\infty^3\sum_{k=0}^\infty A_k(u^4)x_v^{2k+1}\quad\cdot\\
 &\qquad\qquad\qquad\qquad\qquad\qquad\prod_{v\notin \eta + \varepsilon_1} \frac{(u^2;u^2)_\infty}{(-u^2;u^2)_\infty}\sum_{k=0}^\infty C_k(u^2)x_v^{2k}\Big)
\end{align*}

Collecting all the Pockhammer symbols (and the extra powers of $u$) we find (for the $\varepsilon_0$ term) 
\[
u^{\tfrac{1}{2}|\eta + \varepsilon_0| -2}\frac{u^2}{\Delta(u^2)}\big((u^4;u^4)_\infty^3\big)^{|\eta + \varepsilon_0|}\Bigg(\frac{(u^2;u^2)_\infty}{(-u^2;u^2)_\infty}\Bigg)^{16 - |\eta + \varepsilon_0|}.
\]
Using the fact that $\frac{u^2}{\Delta(u^2)} = (u^2;u^2)_\infty^{-24}$ and that $(q;q)_\infty(-q;q)_\infty = (q^2;q^2)_\infty$ this simplifies to
\[
\Big(u\big((u^2;u^2)_\infty(-u^2;u^2)_\infty^2\big)^4\Big)^{\tfrac{1}{2}|\eta + \varepsilon_0| - 2}
\]
Using a theorem of Legendre \cite{leg-1828} which states that $\big((q;q)_\infty(-q;q)^2_\infty\big)^4 = \sum_{k=0}^\infty \sigma_1(2k+1)q^k$, it follows that this term is in fact $E(u)^{\tfrac{1}{2}|\eta + \varepsilon_0| - 2}$ where
\[
E(u) = \sum_{k=0}^\infty \sigma_1(2k+1)u^{2k+1} = \frac{1}{16}\vartheta_2(u)^4
\]
and where $\vartheta_2$ is the Jacobi theta function $\vartheta_2(q) = \sum_{k=0}^\infty q^{(k+1/2)^2}$. An identical computation yields that the terms in front of the $\varepsilon_1$ term simplify to $E(u)^{\tfrac{1}{2}|\eta + \varepsilon_1|-2}$.

We can now rewrite $F_\eta$ as
\begin{multline*}
F_\eta = E(u)^{\tfrac{1}{2}|\eta + \varepsilon_0|-2}\prod_{v\in\eta+\varepsilon_0}\sum_{k=0}^\infty A_k(u^4)x_v^{2k+1}\prod_{v\notin \eta+\varepsilon_0}\sum_{k=0}^\infty C_k(u^2)x_v^{2k} \\
+ E(u)^{\tfrac{1}{2}|\eta + \varepsilon_1|-2}\prod_{v\in\eta+\varepsilon_1}\sum_{k=0}^\infty A_k(u^4)x_v^{2k+1}\prod_{v\notin \eta+\varepsilon_1}\sum_{k=0}^\infty C_k(u^2)x_v^{2k}.
\end{multline*}

This is the generating function for counting hyperelliptic curves $f: \Sigma \to A_n$ with the property that $f(w) \in A[2]$ for all Weirstrass points $w$ where we include the data of how many Weirstrass points lie on a given 2-torsion point. Specifically, the coefficient of the monomial $\prod_{v\in A[2]} x_v^{\bk(v)}$ gives the number of such curves with $\bk(v)$ Weirstrass points having as image the point $v \in A[2]$. From this description, the conclusion follows.
\end{proof}

\begin{corollary}
The coefficient of a monomial $\prod_{v \in A[2]}x_v^{\bk(v)}$ in the generating function $F$ lies in the ring of quasimodular forms.
\end{corollary}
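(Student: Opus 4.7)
\medskip

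\noindent\textbf{Proof proposal.} The plan is to read off the statement as a direct consequence of Theorem \ref{thm_main}, once we check that the ingredients it produces sit in the ring of quasi-modular forms and that the ring is closed under the operations that appear.

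First, I would unwind what ``the coefficient of $\prod_{v\in A[2]} x_v^{\bk(v)}$ in $F$'' means. By the corollary following Theorem \ref{thm_substitution}, this coefficient is precisely $F_{g,\bk}(u)$ (with $2g+2=|\bk|$). Theorem \ref{thm_main} then gives us the dichotomy: either the parity condition on $P$ fails, in which case the coefficient is identically zero (trivially quasi-modular), or else the coefficient equals
\[
E(u)^{\tfrac12|S|-2}\prod_{v\in S}A_{\frac{\bk(v)-1}{2}}(u^4)\prod_{v\notin S}C_{\frac{\bk(v)}{2}}(u^2).
\]
So it suffices to prove that each of the factors is quasi-modular and that their product still lies in a single ring of quasi-modular forms.

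Next I would treat the three kinds of building blocks in turn. For $E(u)$, the identification $E(u)=\tfrac{1}{16}\vartheta_2(u)^4$ exhibited in the proof of Theorem \ref{thm_main} immediately places it in the ring of modular forms for $\Gamma_0(4)$, in weight $2$. For $A_k$ and $C_k$, Theorem \ref{thm_qmod} already asserts their quasi-modularity (the recursions express each as a polynomial in $A_1$, $C_1$ and their $q\tfrac{d}{dq}$-derivatives, and $A_1$, $C_1$ are essentially $E_2$-type Eisenstein series, so Ramanujan's differential system for $E_2,E_4,E_6$ closes the recursion inside the quasi-modular ring). The only subtlety is that the arguments are $u^2$ and $u^4$ rather than $u$; but pulling a quasi-modular form for a congruence subgroup $\Gamma$ back along $u\mapsto u^N$ produces a quasi-modular form for the conjugate subgroup $\begin{pmatrix}N&0\\0&1\end{pmatrix}^{-1}\Gamma\begin{pmatrix}N&0\\0&1\end{pmatrix}$, which is again a congruence subgroup of $SL_2(\Z)$. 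So each factor is quasi-modular for some level.

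Finally I would invoke closure. The ring of quasi-modular forms for any fixed congruence subgroup $\Gamma'$ is closed under addition and multiplication, and any finite collection of quasi-modular forms for various congruence subgroups $\Gamma_1,\dots,\Gamma_r$ lies simultaneously in the quasi-modular ring for $\Gamma'=\bigcap_i \Gamma_i$, still a congruence subgroup. Applying this to the finite product displayed above (and to the sum of two such products coming from the two cases $P\equiv\varepsilon_0$, $P\equiv\varepsilon_1\pmod K$, should both contribute) concludes the proof. I do not expect any serious obstacle: the content is entirely encapsulated in Theorem \ref{thm_main} and Theorem \ref{thm_qmod}, and the only real observation to make is the invariance of quasi-modularity under the substitutions $q\mapsto q^2$ and $q\mapsto q^4$, which is standard.
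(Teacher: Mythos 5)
Your proposal is correct and follows essentially the same route as the paper, whose proof is simply the observation that the statement follows directly from Theorem \ref{thm_main} together with Theorem \ref{thm_qmod}. The additional details you supply---the identification $E(u)=\tfrac{1}{16}\vartheta_2(u)^4$, the stability of quasi-modularity under the substitutions $q\mapsto q^2$ and $q\mapsto q^4$, and closure of the ring under finite sums and products across congruence subgroups---are exactly the standard facts the paper leaves implicit.
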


\begin{proof}
This follows directly from Theorem \ref{thm_main} and from Theorem \ref{thm_qmod}.
\end{proof}

\subsection{A simple consequence}

One consequence of this formula is a prediction for the lower bound on the arithmetic genus of a hyperelliptic curve in an Abelian surface $A$, which we prove.

Note that the lowest degree terms of $A_k(q)$ and $C_k(q)$, respectively, are $\binom{k+1}{2}$ and $k^2$. Thus the lowest degree term of
\[
E(u)^{\tfrac{1}{2}|S| - 2}\prod_{v\in S}A_{\frac{\bk(v)-1}{2}}(u^4)\prod_{v \notin S} C_{\frac{\bk(v)}{2}}(u^2)
\]
(see equation \eqref{eq_main}) is given by
\[
-2 + \sum_{v \in S} \tfrac{1}{2}  + \sum_{v \in S} 4\binom{\tfrac{1}{2}(\bk(v) - 1)+1}{2} + \sum_{v \notin S} 2 \Big(\frac{\bk(v)}{2}\Big)^2 = -2 + \sum_{v \in A[2]}\tfrac{1}{2} \bk(v)^2.
\]
It follows that this formula predicts that the minimal arithmetic genus of a hyperelliptic curves in $A$ (with discrete data $\bk$) is
\[
-1 + \sum_{v \in A[2]} \tfrac{1}{2}\bk(v)^2
\]
by the definition of $F_{g, \bk}(u)$. We now show {\em without assuming the crepant resolution conjecture} that this is true.

\begin{theorem}
Fix $\bk$, and let $g$ be such that $2g + 2 = |\bk|$, and suppose that $A$ has Picard number 1. Then the minimal arithmetic genus of a hyperelliptic curve in $A$ with discrete data $\bk$ is $\sum_{v \in A[2]} \tfrac{1}{2} \bk(v)^2 - 1$.
\end{theorem}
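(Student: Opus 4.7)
The plan is to derive the lower bound by applying adjunction to the image curve on the Kummer K3 surface $\Km(A)$, and then to argue separately that the bound is sharp. The key input is the class computation of Proposition~\ref{prop_cong} and the observation that under Picard rank $1$ the image of a twisted stable map is an irreducible rational curve, so that the standard K3 inequality $\beta^2 \geq -2$ applies.

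First, I would set up the lift: given a hyperelliptic curve $C \subset A$ of arithmetic genus $h = n+1$ with discrete data $\bk$, the construction of Section~\ref{sec_gw_hyper} produces a genus $0$ twisted stable map $\tilde f : \Sigma \to [A/\pm 1]$ with $\bk(v)$ stacky points over each $v \in A[2]$, whose image proper-transforms to a rational curve $B \subset \Km(A)$ in the class
\[
\beta_n \;=\; \tfrac{1}{2} H_n - \tfrac{1}{2} \sum_{v \in A[2]} a_v E_v,
\]
where $a_v$ is the intersection number of $B$ with the $(-2)$-curve $E_v$, obtained as the sum of local ramification indices at points of $\Sigma$ over $v$; odd at the stacky preimages and even at the non-stacky ones, so in particular $a_v \geq \bk(v)$. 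Next, I would invoke Proposition~\ref{prop_comb_curve_reduction}: since $A$ has Picard number $1$, $\Sigma$ is a comb with a single handle, so $B$ is irreducible and the adjunction formula on the K3 surface $\Km(A)$ forces $\beta_n^2 \geq -2$. A direct computation using $H_n^2 = 4n$, $H_n \perp \Lambda$, and $E_v \cdot E_{v'} = -2\delta_{v v'}$ yields
\[
\beta_n^2 \;=\; n - \tfrac{1}{2} \sum_v a_v^2,
\]
so assembling the inequalities gives $n \geq \tfrac{1}{2}\sum_v a_v^2 - 2 \geq \tfrac{1}{2}\sum_v \bk(v)^2 - 2$, and hence $h \geq \tfrac{1}{2}\sum_v \bk(v)^2 - 1$, as desired.

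The main obstacle is sharpness: producing, at the minimal value of $n$, an actual hyperelliptic curve in some Picard rank $1$ abelian surface $A$ whose image in $\Km(A)$ realizes the class $\tfrac{1}{2} H_n - \tfrac{1}{2}\sum_v \bk(v) E_v$ as a smooth rational curve meeting each $E_v$ transversally in $\bk(v)$ points. One would then push back through diagram~\eqref{kummer_surface_structure} to recover the curve downstairs. Existence of some smooth rational representative should follow from Riemann--Roch on the K3 together with the standard fact that a $(-2)$-class lying in the positive cone is represented (possibly after reflection through $(-2)$-curves) by a smooth rational curve; the delicate step is ensuring both the prescribed transversality with each $E_v$ and compatibility with the Picard rank $1$ hypothesis, which requires a careful analysis of the Néron--Severi lattice of $\Km(A)$ and is where I expect the argument to demand the most care.
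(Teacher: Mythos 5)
Your lower-bound argument is correct, but it takes a genuinely different route from the paper. The paper never passes to the Kummer surface for this theorem: it works directly on $A$, writing the arithmetic genus as the geometric genus $g = \tfrac{1}{2}|\bk| - 1$ plus the delta-invariant of the image curve, and observing that the $\bk(v)$ Weierstrass branches forced through each $v \in A[2]$ contribute at least $\binom{\bk(v)}{2}$ nodes, whence $h \geq \tfrac{1}{2}|\bk| - 1 + \sum_v \binom{\bk(v)}{2} = \tfrac{1}{2}\sum_v \bk(v)^2 - 1$. Your global argument --- the class $\beta_n = \tfrac{1}{2}H_n - \tfrac{1}{2}\sum_v a_v E_v$ from Proposition \ref{prop_cong}, irreducibility of the handle from Proposition \ref{prop_comb_curve_reduction}, the computation $\beta_n^2 = n - \tfrac{1}{2}\sum_v a_v^2$, and the K3 adjunction bound $\beta_n^2 \geq -2$ together with $a_v \geq \bk(v)$ --- yields exactly the same numerical bound, which is no accident: $\tfrac{1}{2}|\bk| - 1 + \sum_v \binom{\bk(v)}{2} = \tfrac{1}{2}\sum_v \bk(v)^2 - 1$ identically. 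What your route buys is a lattice-theoretic explanation of the bound (it is the same square computation that produces the leading powers of $u$ in the restricted potential, so it makes the match with the lowest-order terms of $A_k$ and $C_k$ transparent), and the inequality $a_v \geq \bk(v)$ automatically absorbs even-ramification contributions at non-stacky preimages. What the paper's route buys is brevity and locality: it needs no Kummer lattice, no adjunction, and no irreducibility discussion.

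The genuine gap is the one you flagged yourself: sharpness. Your proof establishes only the inequality, and the proposed repair via Riemann--Roch plus reflections through $(-2)$-curves does not go through as stated: reflections change the homology class, so they cannot be used while preserving the prescribed class $\tfrac{1}{2}H_n - \tfrac{1}{2}\sum_v \bk(v)E_v$; moreover an effective $(-2)$-class on a K3 need not admit an irreducible representative (it can be supported on the $E_v$ and other nodal classes), so irreducibility and the prescribed transverse intersection multiplicities with each $E_v$ are real obstacles, not bookkeeping. The paper sidesteps this entirely by staying on $A$: once a curve with the given discrete data exists, the minimum is attained precisely when the branches at each $v$ are pairwise transverse and there are no further singularities, so the equality case is read off from the node count rather than constructed on $\Km(A)$. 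If you want to complete your version, the honest fix is to import that downstairs observation (or the nonvanishing of the relevant coefficient of the generating function) rather than to attempt the smooth-rational-representative construction you sketch.
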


\begin{proof}
Given such discrete data, the geometric genus is given by $g = \tfrac{1}{2}|\bk| - 1$. For each $v \in A[2]$ with $\bk(v) > 1$, we note that any curve which produces such data must be nodal, since more than one Weirstrass point will have the same image. More specifically, we introduce at least $\binom{\bk(v)}{2}$ nodes for each $\bk(v) > 1$. As such, the total arithmetic genus is given by
\begin{align*}
\tfrac{1}{2}|\bk| - 1 + \sum_{v \in A[2]}\binom{\bk(v)}{2} 
 &= \tfrac{1}{2}\sum_{v \in A[2]} \bk(v) - 1 + \sum_{v \in A[2]} \tfrac{1}{2}\big(\bk(v)^2 - \bk(v)\big) \\
 &= -1 + \sum_{v\in A[2]} \tfrac{1}{2} \bk(v)^2
\end{align*}
as claimed.
\end{proof}

\begin{corollary}
There are no smooth hyperelliptic curves in $A$ of genus greater than 5.
\end{corollary}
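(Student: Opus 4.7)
The plan is to combine the minimal arithmetic genus formula of the preceding theorem with the mod-$K$ congruence on the support of $\bk$ coming from Proposition \ref{prop_cong}, and then bound $|P|$ by a short case analysis in the $\mathbb{F}_2$-geometry of $A[2]$.

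First I will observe that for a smooth hyperelliptic curve the arithmetic and geometric genera agree. The preceding theorem gives the geometric genus as $\tfrac{1}{2}|\bk|-1$ and the minimal arithmetic genus as $-1 + \sum_v \tfrac{1}{2}\bk(v)^2$; equating them yields $\sum_v \bk(v)^2 = \sum_v \bk(v)$, which forces $\bk(v) \in \{0,1\}$ for every $v \in A[2]$. In particular, the set $P = \{v : \bk(v) = 1\}$ satisfies $|P| = 2g+2$.

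Next I will invoke Remark \ref{rmk_condition} (a geometric consequence of Proposition \ref{prop_cong}, obtained by reducing the relation $\beta_n = \tfrac{1}{2}H_n - \tfrac{1}{2}\sum a_v E_v$ modulo $K$, using that $a_v \equiv \bk(v) \pmod 2$): the existence of a hyperelliptic curve with discrete data $\bk$ requires $P \Delta \varepsilon_i \in \Pi_3$ for some $i \in \{0,1\}$. By the proposition preceding Corollary \ref{cor_kum_lat}, elements of $\Pi_3$ have cardinality $0$, $8$, or $16$, while $|\varepsilon_0| = 4$ and $|\varepsilon_1| = 6$ by direct inspection (the two complementary 2-planes $\varepsilon_0$ and $\varepsilon_1^*$ meet only at $0$).

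Finally I will show that $|P| = |\eta \Delta \varepsilon_i| \leq 12$ for every $\eta \in \Pi_3$ and every $i$. The cases $|\eta| \in \{0, 16\}$ are immediate. For $|\eta|=8$ I would write $\eta = \{\ell = c\}$ for a nonzero $\ell \in \Hom(A[2], \mathbb{F}_2)$ and $c \in \mathbb{F}_2$. For $i = 0$ one may choose $\ell$ to vanish on the 2-plane $\varepsilon_0$, so $|\eta \cap \varepsilon_0| = 0$ is achievable and $|P|$ attains $12$. For $i = 1$, the key observation is that $\varepsilon_1$ contains an $\mathbb{F}_2$-basis of $A[2]$, so no nonzero $\ell$ can be constant on $\varepsilon_1$; this forces $|\eta \cap \varepsilon_1| \geq 2$ and hence $|P| \leq 10$. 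In every case $|P| \leq 12$, giving $g \leq 5$. The only non-routine ingredient is this last elementary $\mathbb{F}_2$-geometric check; everything else follows directly from the preceding theorem.
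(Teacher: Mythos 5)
Your proof is correct and takes essentially the same route as the paper: smoothness forces $\bk(v) \le 1$ for all $v$ (the paper gets this directly from the observation that $\bk(v) > 1$ creates nodes, which is also the substance of the minimal-genus theorem you invoke), and the condition $P \equiv \varepsilon_i \pmod{\Pi_3}$ then caps $|P| = 2g+2$ at $12$, attained when $P$ is the complement of $\varepsilon_0$. Your case analysis just verifies the bound the paper asserts without proof; the only nitpick is that non-constancy of $\ell$ on $\varepsilon_1$ by itself gives only $|\eta \cap \varepsilon_1| \ge 1$ (already enough, since $14 - 2 = 12$), while your stronger claim $|\eta \cap \varepsilon_1| \ge 2$ additionally needs the parity observation that $\ell$ takes the value $1$ an even number of times on each triple $\{a, b, a+b\} \subset \varepsilon_1$.
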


\begin{proof}
We first note that if any $\bk(v) > 1$,  then there must be at least one node in the image curve, as we have two Weirstrass points with the same image, $v$.

Next, note that due to the requirement that $P$, the set of those $v$ with $\bk(v)$ odd must be congruent to $\varepsilon_i$ mod $K$ (or more accurately, mod $\Pi_3$) yields that no more than twelve of the sixteen 2-torsion points can have $\bk(v) = 1$ (this maximal case is when $P$ is the complement of $\varepsilon_0$). In this case, we have that $2g+2 = 12$, or $g = 5$.
\end{proof}


\section{Proofs for Low Genera}\label{sec_low_genera}

\subsection{Introduction}

We will prove the formula \eqref{eq_main} for the case of genus 1 and genus 2 curves. Recall that a hyperelliptic curve of genus $g$ yields a $\PP^1$ with $2g + 2$ stacky points, and so to enumerate genus $g$ curves we need only consider those monomials of total degree $2g + 2$. For genus 1, this is degree 4 (we will explain below what a genus one curve in $A$, which should generically have no elliptic curves, means), and for genus 2, this is of degree 6.

We assume as in Section \ref{sec_specialization} that the Abelian surface is a product $A \cong S \times F$ of non-isogenous elliptic curves. Recall that there is then an elliptic fibration $A/\pm1 \to S/\pm1$ with general fibre $F$, and with four special fibres $F/\pm1$ over the points $v \in S[2]$. We will further (by abuse of notation) denote by $S$ and by $F$ the classes in $A/\pm1$ of a section $S/\pm1$ and fibre $F/\pm1$, respectively.


In the case that the Picard number of $A$ is 1 and that the curve class is primitive, the source curves are combs with collapsing teeth. This allows us to perform the substitution given in in Theorem \ref{thm_substitution}. When the Picard number of $A$ is greater than 1---as is the case when $A \cong S \times F$---or when the class is not primitive, it is possible that there are collapsing components which join two components mapped into $A$ with non-zero degree (see figure \ref{fig_collapse_between}).

\begin{figure}[ht]
\[
\xymatrix{
 & & & & & \\
 & & & & & \\
 & & & & & \\
 \ar@{-}[rrrrr]_{0} & \times & \times & \times & \times & \\
 & \ar@{-}@/_1.06pc/[luuuu]_{n_1}& & & \ar@{-}@/^1.06pc/[ruuuu]^{n_2} & \\
}
\]
\caption{A collapsing component joining two non-collapsing ones}\label{fig_collapse_between}
\end{figure}

These do not contribute to the Gromov-Witten invariant (see Proposition \ref{prop_double_psi}). From these two considerations, we can consider only those maps which do not collapse any components.

Let $f : \Sigma \to [A/\pm1]$ be a rational curve. As we saw, the source is a tree of rational curves $\Sigma = \Sigma_0 \cup \Sigma_1 \cup \cdots \cup \Sigma_k$, and such that $f_*[\Sigma_0] = S$ and $f_*[\Sigma_i] = n_iF$ for $i \geq 1$ with $\sum n_i = n$ (and $n_i > 0$, as discussed above). Moreover, the leaves of the tree of curves must have at least 3 stacky points on them (plus a node which connects them to the tree).

For convenience, we now need to label the 2-torsion points of $A$. We label them as
\begin{equation}
\xy
\xymatrix@R0.5cm@C0.5cm{
\text{2-torson from $F$}\ar[r] & E_3 \save[0,0].[3,0]!C *\frm<8pt>{-} \restore \save[0,0].[0,3]!C *\frm<8pt>{.} \restore& E_7 & E_{11} & E_{15} \\
 & E_2 \save[0,0].[0,3]!C *\frm<8pt>{.} \restore& E_6 & E_{10} & E_{14} & \text{Translates of $S$}\ar[l]\ar[ld]\ar[lu]\\
 & E_1 \save[0,0].[0,3]!C *\frm<8pt>{.} \restore& E_5 & E_9 & E_{13} \\
 & E_0 \save[0,0].[0,3]!C *\frm<8pt>{-} \restore & E_4 & E_8 & E_{12} & \text{2-torsion from $S$}\ar[l]
}
\endxy
\end{equation}
with monomials labeled similarly. With this labelling, we have the explicit descriptions $\varepsilon_0 = \tfrac{1}{2}(E_0 + E_4 + E_8 + E_{12})$ and $\varepsilon_1 = \tfrac{1}{2}(E_1 + E_2 + E_3) + \tfrac{1}{2}(E_4 + E_8 + E_{12})$.

\subsection{Genus 1}

The genus 1 case is somewhat of an aberation; there are of course no elliptic curves in a generic Abelian surface. It turns out that the genus 1 case corresponds to a `polarization' of type $(1, 0)$. By Poincar\'e's Reducibility Theorem \cite[Chapter 5, Theorem 3.5]{cav}, such an $A$ is isogenous to a product of elliptic curves.


Since the source curve is genus 1, on the orbifold side we have exactly 4 marked stacky points. From the description above, if the source curve were to consist of more than one component, it would have at least 6 marked points, and so the source curve must be irreducible with $f$ mapping it isomorphically onto a section $S/\pm1$. There is only one such map (as $S, F$ were chosen to be generic).

The only term with monomials of degree 4 from all of the functions $F_\eta$ are $x_0x_4x_8x_{12}$, $x_1x_5x_9x_{13}$, $x_2x_6x_{10}x_{14}$, and $x_3x_7x_{11}x_{15}$. If we look at the formula \eqref{eq_main}, the prediction for each of these is also 1, and so the conjecture is verified.

It should be noted that in terms of computing the actual number of such curves, these latter three monomials yield curves that are simply translations of the first, and so the number of genus 1 curves in $A$ in the class $S$ is 1, as we would expect.

\subsection{Genus 2}

The component which maps isomorphically onto a section curve $S/\pm1$ must have three of the six marked points, and so our source curve must consist of exactly two components, $\Sigma = \Sigma_0 \cup \Sigma_1$, and where $f_*[\Sigma_1] = nF$. Thus the component $\Sigma_1$ is an $n$-fold cover of $F/\pm1$ (with certain data about the images of the stacky points in $\Sigma_1$), and so we are reduced to computing the number of such covers.

Analogous to the genus 1 case, since we only care {\em up to translation in $A$}, we need only focus on certain monomials. These are
\[
x_1x_2x_3x_4x_8x_{12} \quad x_0^3x_4x_8x_{12} \quad x_0x_4x_8x_{12}x_1^2 \quad x_0x_4x_8x_{12}x_2^2 \quad x_0x_4x_8x_{12}x_3^2
\]
and the corresponding predictions are
\[
E(u) \qquad A_1(u^4) \qquad C_1(u^2) \qquad C_1(u^2) \qquad C_1(u^2).
\]
The first comes simply from the term $\varepsilon_1$ and the others all come from $\varepsilon_0$.

In each case, we are counting maps $\Sigma_1 \to [F/\pm1]$ of a curve with four $\Z/2$-points to the orbifold $[F/\pm1]$. In this case, we can lift to the cover
\[
\xymatrix{
E \ar[r]\ar[d] & F \ar[d]\\
\Sigma_1 \ar[r] & F/\pm1
}
\]
and so we can compute this by counting covers of $F$ by elliptic curves $E$ satisfying certain conditions based on the image of the 2-torsion of $E$. These are the following.
\begin{enumerate}
\item The first monomial corresponds to those maps $E \to F$ such that the 2-torsion of $E$ surjects onto the 2-torsion of $F$.
\item The second monomial corresponds to those maps such that all 2-torsion of $E$ maps to $0 \in F$.
\item The last three monomials all correspond to the three possible cases where $E[2]$ surjects onto a subgroup $\langle e_i\rangle$ for $e_i$ a non-zero 2-torsion point in $F$.
\end{enumerate}

We first need a classically known fact.

\begin{proposition}
Let $E$ be an elliptic curve. Then the number of degree $n$ isogenies $F \to E$ is given by $\sigma_1(n) = \sum_{d \mid n} d$.
\end{proposition}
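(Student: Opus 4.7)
The plan is to recast the count as one of finite-index sublattices of $\Z^2$, using the uniformization of $F$. Writing $F = \C/\Lambda_F$ with $\Lambda_F \cong \Z^2$, a degree-$n$ isogeny $\phi : F \to E$ is determined up to isomorphism of its target by its kernel $\ker\phi \subset F[n^\infty]$, which is a subgroup of $F$ of order exactly $n$. Conversely every order-$n$ subgroup $G \subset F$ arises as the kernel of such an isogeny, with $E \cong F/G$. Order-$n$ subgroups of $F$ then biject with index-$n$ sublattices $\Lambda' \subseteq \Lambda_F$ via $\Lambda' = \pi^{-1}(G)$ (where $\pi : \C \to F$ is the quotient map), under which the target elliptic curve is recovered as $E \cong \C/\Lambda'$.

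Having reduced to this lattice-theoretic enumeration, I would invoke the classical Hermite normal form. Every index-$n$ sublattice of $\Z^2$ admits a unique ordered basis of the shape
\[
(a,0), \quad (b,d), \qquad a,d \geq 1, \quad ad = n, \quad 0 \leq b < a.
\]
Grouping the count by the divisor $d \mid n$, the number of permissible choices of $(a,b)$ for fixed $d$ is just the number of possible $b$'s, namely $a = n/d$. Summing,
\[
\#\{\Lambda' \subseteq \Z^2 : [\Z^2 : \Lambda'] = n\} \;=\; \sum_{d \mid n} \frac{n}{d} \;=\; \sum_{d \mid n} d \;=\; \sigma_1(n),
\]
which yields the claimed formula.

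There is no genuine obstacle; the only point requiring a word of comment is what exactly is being counted. The natural interpretation --- namely isomorphism classes of pairs $(E,\phi)$ with $\phi$ a degree-$n$ isogeny out of the fixed curve $F$ --- is intrinsic to $F$ alone and corresponds bijectively to the set of order-$n$ subgroups of $F$, which is exactly the invariant computed above. This matches the enumerative context in which the proposition will be used, where the relevant datum is the cover of $F/\pm 1$ together with its ramification behaviour over $F[2]$, and where only the kernel subgroup of the isogeny (not a particular identification of the target) enters.
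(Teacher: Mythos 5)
Your proof is correct and follows essentially the same route as the paper's: both reduce counting degree-$n$ isogenies out of $F$ to counting index-$n$ sublattices of a rank-2 lattice and then enumerate these via the (Hermite) triangular normal form with $ad = n$ and the off-diagonal entry reduced, yielding $\sigma_1(n)$. Your front-end discussion of kernels and of what is being counted (isogenies up to isomorphism of the target) is a welcome clarification of a point the paper leaves implicit, but it does not change the underlying argument.
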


\begin{proof}
As maps of Abelian varieties are determined by their lattices, counting degree $n$ isogenies $F \to E$ is the same as counting index $n$ sublattices of a fixed rank 2 lattice. Up to change of basis of the respective lattices, this is the same as counting matrices of the form
\[
\begin{pmatrix}
a & b \\ 0 & d
\end{pmatrix}
\]
with $ad = n$ and $ 0 \leq b < d$. This is clearly equal to $\sigma_1(n)$ as claimed.
\end{proof}

All that is now required are the following lemmata. We will provide a proof of Lemma \ref{lem_isogeny}, the rest of them having a similar flavour.

\begin{lemma}
Suppose that $g : E \to F$ is a map of elliptic curves. Then the degree of $g$ is odd if and only if the 2-torsion surjects.
\end{lemma}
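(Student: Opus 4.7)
The plan is to translate the surjectivity condition on the restriction $g\vert_{E[2]}\colon E[2] \to F[2]$ into a statement about $\ker g \cap E[2]$, and then invoke Cauchy's theorem (or just the structure of finite abelian groups) to tie the parity of $|\ker g|=\deg g$ to whether this intersection is trivial.

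Concretely, first I would observe that $E[2]$ and $F[2]$ are both elementary abelian 2-groups of order $4$, so $g\vert_{E[2]}$ is surjective if and only if it is injective, i.e.\ if and only if $\ker g \cap E[2] = 0$. Second, I would note that $\ker g$ is a finite abelian group of order $n = \deg g$. If $n$ is odd, then $\ker g$ has no element of order $2$, so $\ker g \cap E[2] = 0$ and surjectivity follows from the first observation. If $n$ is even, Cauchy's theorem produces an element of order $2$ in $\ker g$, which necessarily lies in $E[2]$, so $\ker g \cap E[2]$ is nontrivial and $g\vert_{E[2]}$ has image of order at most $2$, hence is not surjective.

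There is no real obstacle here; the only mild subtlety is making sure we know that $g\vert_{E[2]}$ really lands in $F[2]$ (it does, since $g$ is a group homomorphism and $2x = 0$ implies $2g(x) = 0$) and that $E[2] \cong (\Z/2)^2$ has the same order as $F[2]$, so counting alone forces injectivity and surjectivity to coincide. The argument is purely group-theoretic and does not use any of the preceding machinery of the paper.
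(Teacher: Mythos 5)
Your proof is correct, and it is worth noting that the paper never actually writes this proof out: it proves only Lemma~\ref{lem_isogeny} (the case where the 2-torsion surjects onto a proper subgroup $\langle v\rangle$), remarking that the remaining lemmata ``have a similar flavour.'' The implied paper-style argument is lattice-theoretic: writing $E = \C/\Lambda_E$ and $F = \C/\Lambda_F$, the map $g$ corresponds to an inclusion $\tilde g : \Lambda_E \hookrightarrow \Lambda_F$ of index $\deg g$, the map on 2-torsion is the induced map $\tfrac{1}{2}\Lambda_E/\Lambda_E \to \tfrac{1}{2}\Lambda_F/\Lambda_F$, i.e.\ the mod-2 reduction $\Lambda_E/2\Lambda_E \to \Lambda_F/2\Lambda_F$, and this reduction is surjective precisely when $\det \tilde g = \deg g$ is odd (seen, e.g., via Smith normal form). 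Your route is genuinely different: you work directly with the kernel, using $|\ker g| = \deg g$ together with Lagrange and Cauchy to tie the parity of the degree to the existence of a 2-torsion element of $\ker g$, and the equal counts $|E[2]| = |F[2]| = 4$ to convert injectivity of $g|_{E[2]}$ into surjectivity. Both arguments are complete; yours is more elementary and self-contained, while the lattice framework has the advantage of treating all three lemmata of this subsection (including the factorization statements) uniformly, in the same language used throughout Section~\ref{sec_low_genera}. The one hypothesis you use silently is $|\ker g| = \deg g$, i.e.\ separability of the isogeny --- automatic over $\C$, where the paper lives, but worth flagging, since your otherwise purely group-theoretic argument would fail verbatim in characteristic 2.
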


In this case the generating function is simply the odd-degree part of $A_1(u)$, i.e. $E(u)$.

\begin{lemma}
Suppose that $g : E \to F$ is a map of elliptic curves such that $g(E[2]) = 0$. Then $g$ factors through the degree 4 map
\[
\xymatrix{
E \ar[rr]^g\ar@{.>}[rd] && F \\
& F/F[2]\ar[ru]
}
\]
\end{lemma}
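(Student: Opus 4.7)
The plan is to factor $g$ through the multiplication-by-$2$ map on $F$ using the centrality of $[2]$ in $\Hom(E,F)$.

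Since $g : E \to F$ is a morphism of elliptic curves it is a homomorphism of abelian varieties, so the hypothesis $g(E[2]) = 0$ says $E[2] \subseteq \ker g$. Because $[2]_E : E \to E$ is a surjection with kernel exactly $E[2]$, the universal property of the quotient produces a morphism $h : E \to F$ such that $g = h \circ [2]_E$.

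Now for any homomorphism of abelian varieties $h : E \to F$ we have the identity $h \circ [2]_E = [2]_F \circ h$ (both send $x \mapsto 2h(x)$), so in fact $g = [2]_F \circ h$. The multiplication-by-$2$ map on $F$ decomposes canonically as
\[
[2]_F : F \xrightarrow{q_F} F/F[2] \xrightarrow{\bar\mu_F} F,
\]
where $q_F$ is the quotient map and $\bar\mu_F$ is the isomorphism induced by $[2]_F$. Composing gives the desired factorization $g = \bar\mu_F \circ (q_F \circ h)$ through the degree-$4$ map $[2]_F$, matching the diagram in the statement.

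The only step of any substance is the commutation $h \circ [2]_E = [2]_F \circ h$, which is immediate from $h$ being a group homomorphism, so there is no serious obstacle. If one prefers a completely explicit argument, uniformize $E = \C/\Lambda_E$ and $F = \C/\Lambda_F$: then $g$ is induced by multiplication by some $\alpha \in \C$ with $\alpha \Lambda_E \subseteq \Lambda_F$, the hypothesis $g(E[2]) = 0$ becomes $\tfrac{\alpha}{2}\Lambda_E \subseteq \Lambda_F$, and $h$ is simply multiplication by $\tfrac{\alpha}{2}$.
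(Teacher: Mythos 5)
Your core computation is correct, and your closing lattice remark is in fact essentially the paper's own argument: the paper writes out only the companion Lemma \ref{lem_isogeny} (declaring the others ``similar''), and there the proof is exactly this kind of lattice containment --- here $g(E[2])=0$ translates to $\tilde g(\Lambda_E)\subseteq 2\Lambda_F$. The genuine problem is in your final assembly, which does not match the lemma's diagram, and the mismatch matters. In your factorization $g=\bar\mu_F\circ(q_F\circ h)$, the arrow $F/F[2]\to F$ is $\bar\mu_F$, an \emph{isomorphism} (degree $1$), not the degree-$4$ map; correspondingly your dotted arrow $q_F\circ h\colon E\to F/F[2]$ has degree $\deg g$, not $\deg g/4$. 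Since $F/F[2]\cong F$, every map $E\to F$ ``factors through $F/F[2]$'' in this weak sense, so as bracketed your conclusion is vacuous. The actual content of the lemma --- used in the paper immediately afterwards to convert the count into covers of degree $n/4$ of $F/F[2]$, whence the function $A_1(u^4)$ rather than $A_1(u)$ --- is that the map $E\to F/F[2]$ has degree $\deg g/4$ and the map $F/F[2]\to F$ has degree $4$.

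The fix is immediate from ingredients you already have: you correctly established $g=[2]_F\circ h$ with $\deg h=\deg g/4$, so set $g'=\bar\mu_F^{-1}\circ h\colon E\to F/F[2]$ and $\psi=[2]_F\circ\bar\mu_F\colon F/F[2]\to F$. Then $g=\psi\circ g'$, $\deg g'=\deg g/4$, and $\psi$ is the degree-$4$ isogeny (on uniformizations $\psi$ is multiplication by $4$ from $\C/\tfrac{1}{2}\Lambda_F$ to $\C/\Lambda_F$, whose kernel $\tfrac{1}{4}\Lambda_F/\tfrac{1}{2}\Lambda_F$ has order $4$). Equivalently, in your explicit lattice language: $\tfrac{\alpha}{2}\Lambda_E\subseteq\Lambda_F$ gives $\tfrac{\alpha}{4}\Lambda_E\subseteq\tfrac{1}{2}\Lambda_F$, so multiplication by $\alpha/4$ defines the dotted arrow and multiplication by $4$ the degree-$4$ map. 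In short: right ideas, but you inserted the canonical isomorphism $F/F[2]\cong F$ on the wrong side of the composition, which as written would break the enumerative application of the lemma.
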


In this case, we are actually counting all covers of the curve $F/F[2]$---but this is of course the same as counting covers of degree $n/4$ of an arbitrary curve. That is, we obtain the function $A_1(u^4)$ as desired.

\begin{lemma}\label{lem_isogeny}
Suppose that $g :E \to F$ is a map of elliptic curves such that $g(E[2])$ surjects onto the subgroup $\langle v\rangle$ for some non-zero $v \in F[2]$. Then $g$ factors through the degree 2 map
\[
\xymatrix{
E \ar[rr]^g\ar@{.>}[rd] && F \\
& F/\langle v\rangle\ar[ru]
}
\]
\end{lemma}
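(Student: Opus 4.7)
The plan is to prove Lemma \ref{lem_isogeny} by exploiting the dual isogeny associated to the degree-$2$ quotient $q : F \to F/\langle v\rangle$. Write $F' = F/\langle v\rangle$ and let $\hat q : F' \to F$ be the dual isogeny, which is characterized by the relations $\hat q \circ q = [2]_F$ and $q \circ \hat q = [2]_{F'}$. The degree-$2$ map appearing in the statement of the lemma is precisely $\hat q$, so what needs to be shown is the existence of a homomorphism $g' : E \to F'$ such that $g = \hat q \circ g'$.

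The key observation is that the hypothesis $g(E[2]) \subseteq \langle v\rangle = \ker q$ implies that the composite $q \circ g : E \to F'$ vanishes on $E[2]$. Since $[2]_E : E \to E$ is a surjective isogeny whose kernel is exactly $E[2]$, it realizes the quotient $E \to E/E[2]$, and hence by its universal property there exists a unique homomorphism $g' : E \to F'$ with
\[
g' \circ [2]_E \;=\; q \circ g.
\]
This produces the desired candidate factorization map.

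It remains to verify that $\hat q \circ g' = g$. Composing the previous identity with $\hat q$ and using $\hat q \circ q = [2]_F$ yields
\[
\hat q \circ g' \circ [2]_E \;=\; \hat q \circ q \circ g \;=\; [2]_F \circ g \;=\; g \circ [2]_E.
\]
Since $[2]_E$ is an epimorphism in the category of abelian varieties (indeed, surjective as a map of complex tori), one may cancel it on the right to conclude $\hat q \circ g' = g$, which is exactly the required factorization.

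There is no real obstacle here; the only subtle point is invoking the right universal property, namely identifying $E/E[2]$ with $E$ via $[2]_E$ rather than trying to factor $g$ directly through a quotient of $F$. Everything else is a formal manipulation of isogenies.
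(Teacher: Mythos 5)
Your proof is correct, but it takes a genuinely different route from the paper's. The paper argues transcendentally via lattices: it writes $g$ as a map $\tilde g : \Lambda_E \hookrightarrow \Lambda_F$, chooses bases with $\tfrac{1}{2}\lambda_1$ representing $v$, translates the torsion hypothesis into the congruences $g(\mu_1) - \lambda_1 \in 2\Lambda_F$ and $g(\mu_2) \in 2\Lambda_F$, and concludes $g(\Lambda_E) \subset \langle \lambda_1, 2\lambda_2\rangle$, the index-$2$ sublattice of $\Lambda_F$ corresponding to the degree-$2$ map $F/\langle v\rangle \to F$ --- which is exactly your $\hat q$, realized as multiplication by $2$ on the enlarged lattice $\langle \tfrac{1}{2}\lambda_1, \lambda_2\rangle$. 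You instead argue formally with isogenies: $q \circ g$ kills $E[2]$, hence factors through $[2]_E$ by the universal property of the quotient $E \to E/E[2] \cong E$, producing $g'$ with $g' \circ [2]_E = q \circ g$; then $\hat q \circ q = [2]_F$ together with right-cancellation of the surjection $[2]_E$ gives $g = \hat q \circ g'$. Each approach buys something: yours is basis-free, works verbatim over any field of characteristic $\neq 2$, and generalizes immediately to $n$-torsion; the paper's lattice computation is of a piece with the surrounding section, where isogenies are counted explicitly as sublattices, so exhibiting the concrete sublattice $\langle \lambda_1, 2\lambda_2\rangle$ feeds directly into the enumeration $A_1(u^2) - A_1(u^4) = C_1(u^2)$ that follows. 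One small remark: you only use the containment $g(E[2]) \subseteq \langle v\rangle$ rather than the stated surjectivity; that is fine, since containment is all the factorization requires, and surjectivity merely separates this case from the one where all of $E[2]$ maps to zero, which is handled by the preceding lemma.
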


\begin{proof}
As before, we consider maps of elliptic curves via the maps on their underlying lattices. That is, we consider the map $\tilde g : \Lambda_E \hookrightarrow \Lambda_F$. Choose a basis $\lambda_1, \lambda_2$ of $\Lambda_F$, and $\mu_1, \mu_2$ of $\Lambda_E$ such that $\frac{1}{2}\lambda_1$ represents the 2-torsion point $v$. We will show that its image lies in the sublattice $\langle \lambda_1, 2\lambda_2\rangle$, which proves the lemma.

The condition above yields (up to labelling) that
\[
g\big(\tfrac{1}{2}\mu_1\big) \equiv \tfrac{1}{2} \lambda_1 \pmod {\Lambda_F} \qquad \qquad \text{and} \qquad\qquad g\big(\tfrac{1}{2}\mu_2\big) \in \Lambda_F
\]
or equivalently that
\[
g(\mu_1) - \lambda_1 \in 2\Lambda_F \qquad\qquad \text{and}\qquad\qquad g(\mu_2) \in 2\Lambda_F.
\]
It follows that $g(\Lambda_E) \subset \langle \lambda_1, 2\Lambda_F\rangle = \langle \lambda_1, 2\lambda_2\rangle$ as claimed.
\end{proof}

In this final case, we are counting those covers of degree $n/2$ of $F/\langle e_i \rangle$ (i.e. $A_1(u^2)$) less those whose 2-torsion is all mapped to zero (i.e. $A_1(u^4)$, from before). That is, the generating function is $A_1(u^2) - A_1(u^4) = C_1(u^2)$.

These three lemmas together prove the conjecture for the genus 2 case.

\subsection{Proof of G\"ottsche's genus 2 formula}\label{sec_gottsche}

If we consider all the previous cases, it follows that the number of genus 2 curves in $A$ up to translation in $A$ is given by the sum of all of the given terms above. That is, if we let $F_2(u)$ denote the number of genus 2 curves in $A$ up to translation, then we have
\begin{align}
F_2(u) &= E(u) + A_1(u^4) + 3\big(A_1(u^2) - A_1(u^4)\big) \notag \\
 &= E(u) + 3A_1(u^2) - 2A_1(u^4) \label{eq_gottsche}.
\end{align}
It is clear that the odd powers of $u$ in the right-hand side are the same as those in $A_1(u)$. The fact that the even powers match those of $A_1(u)$ will follow from the following lemmata. We once more prove only the second, the first being similar.

\begin{lemma}
Let $n > 0$ be congruent to $2 \pmod 4$. Then
\[
\sigma_1(n) = 3\sigma_1(n/2).
\]
\end{lemma}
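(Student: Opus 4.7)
The plan is to exploit the congruence hypothesis to reduce this to the classical multiplicativity of $\sigma_1$. Write $n = 2m$ where $m = n/2$. The assumption $n \equiv 2 \pmod 4$ is precisely the statement that $m$ is odd, so $\gcd(2, m) = 1$.

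First I would invoke the fact that $\sigma_1$ is a multiplicative arithmetic function: for any coprime positive integers $a,b$, one has $\sigma_1(ab) = \sigma_1(a)\sigma_1(b)$. Applying this to the factorization $n = 2 \cdot m$ with $\gcd(2,m) = 1$ gives
\[
\sigma_1(n) \;=\; \sigma_1(2)\,\sigma_1(m) \;=\; 3\,\sigma_1(n/2),
\]
since $\sigma_1(2) = 1 + 2 = 3$.

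If one prefers a more hands-on argument that avoids quoting multiplicativity, the same identity can be read off by partitioning the divisor set of $n$ into parities. Every divisor of $n = 2m$ is either odd or even. Since $m$ is odd, the odd divisors of $2m$ are exactly the divisors of $m$, contributing $\sigma_1(m)$ to the sum. The even divisors of $2m$ are exactly those of the form $2d$ with $d \mid m$, contributing $\sum_{d\mid m} 2d = 2\sigma_1(m)$. Adding these gives $\sigma_1(n) = \sigma_1(m) + 2\sigma_1(m) = 3\sigma_1(m)$, as claimed.

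There is no substantive obstacle here; the content of the lemma is the observation that $n \equiv 2 \pmod 4$ is exactly the condition under which the powers of $2$ in $n$ factor cleanly from the odd part, and the proof is a one-line application of multiplicativity. The only point worth flagging is that the corresponding identity \emph{fails} without the congruence hypothesis (e.g.\ $\sigma_1(4) = 7 \neq 3\sigma_1(2) = 9$), which explains why the companion lemma for $n \equiv 0 \pmod 4$ must be treated separately.
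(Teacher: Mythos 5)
Your proof is correct and matches the paper's approach: the paper proves only the companion lemma for $n \equiv 0 \pmod 4$ (writing $n = 2^k m$ with $m$ odd and invoking multiplicativity of $\sigma_1$), remarking that this case is similar, and your argument is precisely that proof specialized to $k = 1$, i.e.\ $\sigma_1(2m) = \sigma_1(2)\sigma_1(m) = 3\sigma_1(m)$ for odd $m$. The additional hands-on divisor-parity argument is a nice self-contained alternative but the core route is the same.
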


\begin{lemma}
Let $n > 0$ be congruent to $0 \pmod 4$. Then
\[
\sigma_1(n) = 3\sigma_1(n/2) - 2\sigma_1(n/4).
\]
\end{lemma}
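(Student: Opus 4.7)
My plan is to exploit the multiplicativity of $\sigma_1$ to reduce the identity to a purely 2-adic calculation. Since $n \equiv 0 \pmod 4$, write $n = 2^a m$ with $m$ odd and $a \geq 2$. Then $n/2 = 2^{a-1}m$ and $n/4 = 2^{a-2}m$ all have the same odd part, so multiplicativity of $\sigma_1$ gives
\[
\sigma_1(n) = \sigma_1(2^a)\sigma_1(m), \quad \sigma_1(n/2) = \sigma_1(2^{a-1})\sigma_1(m), \quad \sigma_1(n/4) = \sigma_1(2^{a-2})\sigma_1(m).
\]
After dividing through by $\sigma_1(m)$ (which is nonzero), the claim reduces to the identity $\sigma_1(2^a) = 3\sigma_1(2^{a-1}) - 2\sigma_1(2^{a-2})$ for $a \geq 2$.

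Next I would substitute the closed form $\sigma_1(2^k) = 1 + 2 + \cdots + 2^k = 2^{k+1} - 1$ and verify directly that
\[
3(2^a - 1) - 2(2^{a-1} - 1) = 3 \cdot 2^a - 2^a - 3 + 2 = 2 \cdot 2^a - 1 = 2^{a+1} - 1,
\]
which is exactly $\sigma_1(2^a)$. This completes the reduction and finishes the proof.

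The only genuine content is the 2-adic identity, which is immediate; the main work—if it can even be called that—is just invoking multiplicativity. I do not expect any obstacle here. One could alternatively phrase the lemma (together with the preceding one) as saying that the generating function equality \eqref{eq_gottsche} holds because $3A_1(u^2) - 2A_1(u^4)$ selects precisely the even-index part of $A_1(u)$, but the direct divisor-sum computation above is the cleanest route and matches the style used for the companion lemma.
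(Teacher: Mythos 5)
Your proof is correct and is essentially identical to the paper's: both write $n = 2^a m$ with $m$ odd and $a \geq 2$, invoke multiplicativity of $\sigma_1$ to reduce to powers of $2$, and verify the identity $2^{a+1}-1 = 3(2^a-1) - 2(2^{a-1}-1)$ directly. No gaps.
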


\begin{proof}
Write $n = 2^km$ with $2 \nmid m$ and with $k \geq 2$. Then as $\sigma_1$ is a multiplicative function, we have that
\begin{align*}
\sigma_1(n) &= \sigma_1(2^k)\sigma_1(m) \\
 &= (2^{k+1}-1)\sigma_1(m) \\
 &= \big(3(2^k - 1) - 2(2^{k-1} - 1)\big)\sigma_1(m) \\
 &= 3\sigma_1(n/2) - 2\sigma_1(n/4)
\end{align*}
as claimed.
\end{proof}

In \cite{gottsche}, the following is proven.

\begin{theorem}[G\"ottsche, Theorem 3.2]\label{thm_gottsche}
Let $(A,L)$ be a polarized Abelian surface with polarization of type $(1, n)$. Then the generating function for the number of genus 2 curves in the linear system $|L|$, summed over all polarization types, is given by
\[
\tilde{F}_2(u) = \sum_{n=1}^\infty n^2\sigma_1(n) u^n = D^2A_1(u)
\]
\end{theorem}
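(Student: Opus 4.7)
The plan is to derive Göttsche's formula from equation~\eqref{eq_gottsche} together with the two $\sigma_1$-identity lemmas just above, by converting the count of genus $2$ curves up to translation in $A$ into the count of genus $2$ curves inside the linear system $|L|$. The main work lies in the multiplicity computation relating these two counts.

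First, I would combine equation~\eqref{eq_gottsche} with the two $\sigma_1$ identities. Splitting into residues of $n$ modulo $4$: for $n$ odd only $E(u)$ contributes and gives $\sigma_1(n)$; for $n\equiv 2\pmod 4$ only $3A_1(u^2)$ contributes and equals $\sigma_1(n)$ by the first lemma; for $n\equiv 0\pmod 4$ the terms $3A_1(u^2)$ and $-2A_1(u^4)$ combine to $\sigma_1(n)$ by the second lemma. This gives $F_2(u) = A_1(u) = \sum_n \sigma_1(n)u^n$, so for $(A_n,L_n)$ of polarization type $(1,n)$ the count $N_{2,n+1}$ of genus $2$ curves in the class $c_1(L_n)^\vee$ \emph{up to translation in $A_n$} is exactly $\sigma_1(n)$.

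Next I would pass from the up-to-translation count to the count inside $|L_n|$. For a polarization of type $(1,n)$ the kernel $K(L_n) = \ker(\phi_{L_n}\colon A_n\to\hat A_n)$ has order $n^2$, and for a curve $C\in|L_n|$ the translate $t_aC$ remains in $|L_n|$ iff $t_a^*L_n\cong L_n$, iff $a\in K(L_n)$. Hence each $A_n$-orbit of genus $2$ curves contributes exactly $|K(L_n)|/|\mathrm{Stab}_{A_n}(C)|$ distinct members to $|L_n|$. For generic $(A_n,L_n)$ this stabilizer is trivial, so each orbit contributes $n^2$ curves and the total in $|L_n|$ is $n^2\sigma_1(n)$. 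Since $D = u\,d/du$ acts on $u^n$ by multiplication by $n$, summing over $n$ yields
\[
\tilde F_2(u) = \sum_{n\geq 1} n^2 \sigma_1(n) u^n = D^2 A_1(u),
\]
as claimed.

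The main obstacle is verifying that $\mathrm{Stab}_{A_n}(C)$ is trivial for a generic smooth genus $2$ curve in the class. A positive-dimensional stabilizer would be an elliptic subgroup of $A_n$ along which $C$ is translated into itself, forcing $A_n$ to admit an elliptic fibration with $C$ a multisection; this contradicts the generic choice of $(A_n,L_n)$ in the moduli of polarized abelian surfaces of type $(1,n)$. One can also verify this on the product locus $A\cong S\times F$ of Section~\ref{sec_specialization}, where the relevant genus $2$ curves are explicit, and then extend by deformation invariance.
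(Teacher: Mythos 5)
Your proposal follows essentially the same route as the paper's Section \ref{sec_gottsche}: it combines equation \eqref{eq_gottsche} with the two $\sigma_1$-lemmas to obtain $F_2(u)=A_1(u)$, and then converts the up-to-translation count into the count in $|L_n|$ via the order-$n^2$ kernel of the isogeny $a\mapsto L\otimes t_a^*L^{-1}$, exactly as the paper does. Your extra care about $\mathrm{Stab}_{A_n}(C)$ is a point the paper passes over silently, and in fact no genericity is needed there: the stabilizer acts freely on the genus-$2$ normalization, so Riemann--Hurwitz gives $2=|G|(2g'-2)$, forcing $|G|=1$ always.
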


To see that these are equivalent, we note that the difference between the two counts---that is, curves in a fixed linear system vs. curves up to translation---comes from translating by elements in the kernel of the isogeny $A \to \widehat{A} = Pic^0(A)$ given by $a \mapsto L \otimes t_a^*L^{-1}$. If the polarization is of type $(1, n)$, then the map on lattices $H_1(A) \to H_1(\hat A)$ can be represented by the matrix \eqref{eq_polarized_basis} (see Section \ref{sec_ab_kum}).  The map is thus of degree $n^2$, and so the kernel consists of exactly $n^2$ elements. This yields the claim.

\appendix

\section{Structure of the Moduli Space}\label{appendix_mod}

In this Appendix we gather a few facts about the structure of the moduli space $\moda{\bk}{n}$, and in particular about its reduced virtual fundamental class. All throughout we assume that $A$ is a generic Abelian surface and has Picard number 1.

As stated before, the Gromov-Witten theory of an Abelian or K3 surface, strictly speaking is trivial, as any of these can be deformed into a non-algebraic surface. To account for that, we look at a reduced theory for these surfaces. For more detail, see \cite{bryan_leung_k3, bryan_leung_abelian, mpt_k3}.

To construct the reduced class on $\moda{2g+2}{n}$, we use the following approach. Let $A$ be a fixed polarized Abelian surface with polarization of type $(1, n)$, and let $B$ be the family of K\"ahler metrics arising from the hyperk\"ahler structure. Note that $B \cong S^2$, the real 2-sphere.

Let $\mathscr{A} \xrightarrow{\pi} B$ be the family of Abelian surfaces over $B$ given by this family of metrics. That is, $A_b = \pi^{-1}(b)$ is $A$ with the K\"ahler structure given by $b$. We can take the fiber-wise quotient to obtain the family $[\mathscr{A}/\pm1] \xrightarrow{\pi} B$ which we use to construct our reduced class. 

\begin{remark}
The family $\mathscr{A}$ is {\em not} an algebraic family. In fact, to work
with this we must leave the algebraic setting and move into the complex
analytic category. However, while it is not algebraic, it is fiber-wise
K\"ahler, and so we are still able to work with Gromov-Witten invariants of
this family.

It is worth noting that the construction of the reduced class for families
of K3 surfaces has been done in \cite{mpt_k3} purely in the algebraic category.
It seems likely that their methods would work similarly to obtain an
algebraic reduced class for the moduli space of maps into an Abelian
surface, and that we should similarly be able to do this for the orbifold
$[A/\pm1]$. We do not however pursue this approach in this work.

In the end we use the notion of the Twistor family as it is a
well-understood and concrete approach. This concrete approach suits us well,
as it permits us to define our invariants with as little pain as possible.
\end{remark}

Note that we have an inclusion $\iota : [A/\pm1] \to [\mathscr{A}/\pm1]$ as one of the fibres. For brevity's sake, define
\[
M = \moda{2g+2}{\beta} \qquad \text{and} \qquad M' = \moda[{[\mathscr{A}/\pm1]}]{2g+2}{\iota_*\beta}.
\]
We have the following lemma.

\begin{lemma}
Let $[A/\pm1]$ be as above, and suppose that $C \subset [A/\pm1]$ is a holomorphic curve. Then the only K\"ahler structure in $B$ that has a holomorphic curve in the class $[C]$ is the original K\"ahler structure for which $C$ is holomorphic.
\end{lemma}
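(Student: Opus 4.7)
The plan is to reduce this to the standard fact about the twistor family of a hyperkähler surface: the Kähler classes $\{[\omega_b]\}_{b\in B}$ trace out a round $2$-sphere sitting inside a $3$-dimensional positive-definite real subspace $V \subset H^2(A,\R)$, and a class $\gamma \in H^2(A,\R)$ is of type $(1,1)$ with respect to the complex structure at $b$ if and only if its orthogonal projection onto $V$ (via the intersection pairing) is a scalar multiple of $[\omega_b]$. This is because the real and imaginary parts of the holomorphic $(2,0)$-form $\Omega_b$ span the orthogonal complement of $[\omega_b]$ inside $V$, so being $(1,1)$ at $b$ is equivalent to being orthogonal to this $2$-plane.

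The first step would be to pull $C$ back through the double cover $A \to A/\pm1$ to obtain a $\pm1$-invariant holomorphic curve $\tilde C \subset A_{b_0}$, where $b_0 \in B$ denotes the original Kähler structure; the homology class $[\tilde C] \in H_2(A,\R)$ determines and is determined by $[C]$. Since $\tilde C$ is holomorphic at $b_0$, its Poincaré dual is of type $(1,1)$ for $b_0$ and pairs strictly positively with $[\omega_{b_0}]$ (area is positive), so the projection of $\mathrm{PD}([\tilde C])$ onto $V$ is a \emph{positive} multiple of $[\omega_{b_0}]$.

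Now suppose that some other $b \in B$ supported a holomorphic curve $C' \subset [A_b/\pm1]$ in the class $[C]$. Pulling back to $A_b$, the same argument would force the projection of $\mathrm{PD}([\tilde C])$ onto $V$ to be a positive multiple of $[\omega_b]$ as well. The rays spanned by $[\omega_b]$ and $[\omega_{b_0}]$ in $V$ coincide only when $b = \pm b_0$, and the antipode is ruled out by positivity, since $\omega_{-b_0} = -\omega_{b_0}$. Hence $b = b_0$.

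The main obstacle will be verifying that the hyperkähler / Hodge-theoretic setup descends cleanly from $A$ to $[A/\pm 1]$, so that ``class of a holomorphic curve on the orbifold'' really does correspond to a $\pm1$-invariant Hodge class on $A_b$. This should be harmless: the involution $\pm 1$ acts by hyperkähler isometries on every fibre $A_b$, so the twistor family descends fibrewise to $[\mathscr{A}/\pm 1]$, and the Hodge-theoretic condition transfers verbatim to $\pm 1$-invariant classes on the orbifold. Beyond this, everything reduces to elementary linear algebra on the positive-definite $3$-plane $V$.
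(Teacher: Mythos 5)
Your argument is correct and follows the paper's own route: both proofs lift the curve through the double cover $A \to A/\pm1$ (using that $\pm1$ acts by hyperk\"ahler isometries, so the orbifold question reduces to one about $\pm1$-invariant classes on the twistor family of $A$) and then invoke the fact that only one fibre of that twistor family carries holomorphic curves in a fixed homology class. The paper disposes of this fact by citing \cite[Lemma 3.4]{bryan_leung_abelian}, whereas you prove it directly by projecting the Poincar\'e dual onto the positive-definite $3$-plane spanned by the K\"ahler forms and using positivity of area to rule out the antipodal point --- which is precisely the content of the cited lemma, so the two proofs are essentially the same.
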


\begin{proof}
Suppose that there are two complex structures $b, b'$ for which there are curves in the class $[C]$ which are holomorphic. We can then lift these to the cover $A \to [A/\pm1]$ to obtain two differing complex structures on $A$ which support curves in a fixed homology class; this contradicts \cite[Lemma 3.4]{bryan_leung_abelian}.
\end{proof}

From this we obtain the following.

\begin{proposition}
The moduli spaces $M$ and $M'$ are isomorphic in the category of complex analytic stacks.
\end{proposition}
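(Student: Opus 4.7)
The plan is to show that the natural morphism $\Phi : M \to M'$ obtained by postcomposition with the fibre inclusion $\iota : [A/\pm 1] \hookrightarrow [\mathscr{A}/\pm 1]$ is an equivalence of complex analytic stacks. That $\Phi$ is well defined is formal: a $T$-family of genus $0$ twisted stable maps to $[A/\pm 1]$ becomes a $T$-family into $[\mathscr{A}/\pm 1]$ by postcomposition, and the pushforward of the class is $\iota_*\beta$. The substantive content is that every object of $M'$ factors essentially uniquely through $\iota$, from which fully-faithfulness and essential surjectivity of $\Phi$ follow at once.

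To construct the inverse on geometric points, let $f : \Sigma \to [\mathscr{A}/\pm 1]$ be a twisted stable map with $f_*[\Sigma] = \iota_*\beta$, and compose with the holomorphic projection $\pi : [\mathscr{A}/\pm 1] \to B$ to get a holomorphic map $g = \pi \circ f : \Sigma \to B$. On homology, $g_*[\Sigma] = \pi_*\iota_*\beta = (\pi\circ\iota)_*\beta = 0$ since $\pi\circ\iota$ factors through a point of $B$. Any holomorphic map from a compact connected nodal curve to $B \cong \PP^1$ with vanishing pushforward class must be constant, so $f$ factors through a single fibre $[A_b/\pm 1]$. The preceding lemma, applied to the image of $f$, then pins down $b$ to be the distinguished fibre through which $\iota$ is defined. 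Uniqueness of the factorization follows because $\iota$ is a closed immersion of complex analytic stacks.

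The same analysis carries over to families: for a $T$-family in $M'$, composition with $\pi$ gives a holomorphic map from the universal twisted curve $\mathfrak{C} \to T$ to $B$ that is constant on every geometric fibre, so it defines a holomorphic map $T \to B$; the lemma forces this map to be constantly equal to the distinguished basepoint. Consequently the family factors through $\iota$, producing the inverse to $\Phi$. The main obstacle I foresee is precisely this last step: one must verify that the fibrewise factorizations assemble into a genuine morphism of families of twisted curves into $[A/\pm 1]$, rather than a mere pointwise construction. This is what the combination of holomorphicity of $\pi \circ f$ on the total space of $\mathfrak{C}$ with the rigidity supplied by the lemma is designed to supply, and checking that this produces a morphism of stacks (not just of underlying groupoids) is where one must be most careful.
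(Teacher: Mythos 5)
Your proposal is correct and follows essentially the same route as the paper: the forward map is postcomposition with $\iota$, and the inverse comes from showing every holomorphic curve in $[\mathscr{A}/\pm1]$ in the class $\iota_*\beta$ factors through the distinguished fibre, with the preceding lemma pinning down the fibre. Your degree-zero argument via the holomorphic projection $\pi\circ f$ and your attention to assembling the fibrewise factorization over families merely make explicit what the paper asserts tersely (``all holomorphic curves in $[\mathscr{A}/\pm1]$ land in a fixed fibre'').
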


\begin{proof}
There is an obvious map $M \to M'$ induced by the inclusion $\iota$. Specifically, for a family
\[
\xymatrix{
\tilde{C} \ar[r]\ar[d] & A \ar[d] \\
C \ar[r]\ar[d] & [A/\pm1] \\
T \ar@/^1pc/[uu]^{s_i}
}
\]
we can compose with the inclusion $\iota$ to obtain
\[
\xymatrix{
\tilde{C} \ar[r]\ar[d] & A \ar[d]\ar@{.>}[r] & \mathscr{A} \ar[d] \\
C \ar[r]\ar[d] & [A/\pm1] \ar@{.>}[r]_\iota & [\mathscr{A}/\pm1]\\
T \ar@/^1pc/[uu]^{s_i}
}
\]
For the reverse direction, note that all holomorphic curves in $[\mathscr{A}/\pm1]$ land in a fixed fibre $[A/\pm1]_b$. This given a diagram
\[
\xymatrix{
\tilde{C} \ar[r]\ar[d] & \mathscr{A} \ar[d] \\
C \ar[r]_f\ar[d] & [\mathscr{A}/\pm1] \\
T \ar@/^1pc/[uu]^{s_i}
}
\]
we note that the map $f$ factors through this fixed fibre $[A/\pm1]_b$. This yields the inverse map $j : M' \to M$.
\end{proof}

We compute the virtual dimension of $M'$ to be
\[
\int_{\iota_*\beta}c_1\big([\mathscr{A}/\pm1]\big) + (1 - g)(\underbrace{\dim [\mathscr{A}/\pm1]}_{=3} - 3) + (2g + 2) - \sum_{i=1}^{2g+2} \underbrace{age(p_i)}_{=1} = 0
\]
and so we have a virtual fundamental class $[M']^{vir}$ in degree zero.

\begin{definition}
We define the {\em reduced virtual fundamental class} on $M$ to be
\[
[M]^{red} = j_*[M']^{vir}.
\]
\end{definition}

We next investigate the structure of the space $\overline{M}_{\bk,n} = \moda{\bk}{n}$. Recall that, for $\bk : A[2] \to \Z_{\geq 0}$, this is the moduli space of genus 0 twisted stable maps into $[A/\pm1]$ such that $\bk(v)$ stacky points have as image $\bk(v)$.

Denote by $\lambda \vdash \bk$ a `multipartition' of $\bk$. That is, a collection of partitions $\lambda^v \vdash \bk(v)$ with parts $(\lambda^v_1, \ldots, \lambda^v_{r_v})$, indexed by $v \in A[2]$. We say that a twisted stable map has partition type $\lambda$ if we can write the source curve as
\[
\Sigma = \Sigma_0 \cup \bigcup_{v \in A[2]} \bigcup_{i = 1}^{r_v} \Sigma_i^v
\]
with each $\Sigma_i^v$ a (potentially nodal) genus 0 curve with $\lambda_i^v$ marked $\Z/2$-points, and where $\Sigma_i^v$ is attached to the curve $\Sigma_0$ at some point, which is will be stacky depending on the parity of $\lambda_i^v$. The map then collapses each $\Sigma_i^v$ to the stacky point in $[A/\pm1]$ corresponding to $v \in A[2]$.

Lastly, denote by $\overline{M}_{\lambda,n}$ the closed substack consisting of those maps with partition type $\lambda$. The main result is the following.

\begin{proposition}\label{prop_disjoint_union}
If the Picard number of $A$ is 1, then
\[
\overline{M}_{\bk, n} = \coprod_{\lambda \vdash \bk} \overline{M}_{\lambda, n}.
\]
\end{proposition}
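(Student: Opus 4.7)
The plan is a two-step proof: first, every $f : \Sigma \to [A/\pm1]$ in $\overline{M}_{\bk,n}$ carries a canonical multipartition $\lambda \vdash \bk$, giving a set-theoretic decomposition; second, $\lambda$ is locally constant in families, so each $\overline{M}_{\lambda,n}$ is both open and closed, and the union is genuinely disjoint.

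For the first step I would invoke Proposition~\ref{prop_comb_curve_reduction}: since the Picard rank of $A$ is $1$ and the target class is primitive, every such $f$ is a comb curve with a unique non-collapsing handle $\Sigma_0$ and collapsing teeth attached to it. Each maximal collapsing subtree $\Sigma_i^v$ attaches to $\Sigma_0$ at a single node and contracts to a single stacky point $v \in A[2]$; grouping the teeth by $v$ and recording their $\Z/2$-marked-point counts $\lambda_i^v$ defines the multipartition $\lambda = (\lambda^v)_{v \in A[2]} \vdash \bk$. By construction, $f \in \overline{M}_{\lambda,n}$ for this $\lambda$ and no other, giving $\overline{M}_{\bk,n} = \coprod_{\lambda \vdash \bk} \overline{M}_{\lambda,n}$ set-theoretically.

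For the second step, I would show that $\lambda$ cannot jump in a flat family. The combinatorial type $\lambda$ can only change by smoothing an attaching node between a tooth $\Sigma_i^v$ and the handle, since smoothings and bubblings internal to a single tooth or internal to the handle leave $\lambda$ unchanged. I claim such an attaching-node smoothing is obstructed: in any one-parameter smoothing, the $\lambda_i^v$ sections originally carried by $\Sigma_i^v$ would have to persist as sections of the universal curve mapping into the substack of $[A/\pm1]$ over $v$, but on the handle side the preimage of $v$ is a finite set (the transverse intersections of the image of $\Sigma_0$ with the exceptional curve $E_v$ in the resolution $\Km(A)$). Hence the perturbed sections would collide with each other and with the location of the original attaching node, violating the distinctness of marked points.

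The main obstacle is the transversality argument needed to conclude that the local preimage of $v$ on the handle is a single point: one must verify that the handle, as a representative of a primitive class in $[A/\pm1]$, meets the exceptional divisor structure transversely over each $v \in A[2]$. This should follow from the Picard-rank-$1$ genericity together with standard transversality for primitive classes on the K3 resolution $\Km(A)$, and needs to be phrased compatibly with the reduced obstruction theory that defines $[\overline{M}_{\bk,n}]^{red}$.
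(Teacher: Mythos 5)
Your overall skeleton (comb-curve decomposition via Proposition \ref{prop_comb_curve_reduction}, then local constancy of $\lambda$ by showing attaching nodes cannot smooth) is the same as the paper's, but your key step has a genuine gap: the collision argument only constrains the central fibre. The statement that ``the preimage of $v$ on the handle is a finite set'' refers to the fixed map $f|_{\Sigma_0}$, whereas in a one-parameter smoothing the nearby fibres are a priori \emph{different} maps, whose images may move in $[A/\pm1]$ and whose ramification profile over $v$ may change, creating additional stacky points over $v$ at which the $\lambda_i^v$ marked sections can sit with no collision at all. To rule this out you must first prove the maps are rigid, and this is precisely what the paper supplies and your sketch omits: (i) the locus of rational curves (excluding collapsing components) in $[A/\pm1]$ in this class is $0$-dimensional, since otherwise the proper transforms would yield a positive-dimensional family of rational curves on the K3 surface $\Km(A)$, which cannot exist; and (ii) because the Picard number is $1$ and the class is primitive, the source is the normalization of its image, so any family over a punctured base is constant; separatedness and properness of the moduli space then force the unique limit to be the same tooth-free map, contradicting the assumption that the central fibre is a comb with a tooth. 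Once rigidity is in hand, your counting of stacky points over $v$ does give a valid alternative finish (the normalization of a fixed image has a fixed number of stacky points over $v$, too few to carry the extra markings); without it, the count is not stable in the family and the argument fails.

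Two further points. The transversality you single out as the main obstacle is a red herring and in general false: the handle may meet $E_v$ with multiplicity greater than one --- stacky points are only required to have odd local degree (see Section \ref{sec_specialization} and the multiplicities $a_v$ in Proposition \ref{prop_congruence}) --- and the paper's proof never needs transverse intersections, only the rigidity statements above. Also, marked points ``colliding'' in a limit is not by itself a contradiction in a compactified moduli space of twisted stable maps, since collisions are resolved by bubbling; the contradiction must be that the nearby smoothed fibres cannot exist, which again is exactly what rigidity delivers.
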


\begin{proof}
As stated before, each $\overline{M}_{\lambda, n}$ is a closed substack, and so we must show that they are also open in $\overline{M}_{\bk, n}$. We claim that any deformation of the nodes connected a tooth to the handle cannot be smoothed.

We first note that the collection of rational curves (excluding collapsing components) in $[A/\pm1]$ is 0-dimensional. Indeed, if it were not then by looking at the proper transform we would obtain a positive dimensional family of rational curves in $Km(A)$, which cannot exist (See, e.g., \cite{chen_ratl}).

Now, since the Picard number is 1 and the curve class is primitive, the source curve must be the normalization of its image. We will assume for simplicity that the number of teeth on the curve is 1, and that this tooth is itself irreducible. Thus if we were to smooth the node joining this tooth, the resulting source curve must be irreducible.

Consider now a flat family of twisted stable maps into $[A/\pm1]$ over a punctured base $T' = T \setminus\{p\}$.
\[
\xymatrix{
\mathscr{C} \ar[r] \ar[d] & [A/\pm1] \\
T'
}
\]
Since each fibre is the normalization of the image curve in $[A/\pm1]$, this family must be constant. As the moduli spaces $\overline{M}_{\bk, n}$ are separated and proper, there is a unique way to fill in the central fibre, which in this case must also be a constant family. In particular, the resulting central fibre is also the normalization of the image, and so must have no teeth. It follows then that no nodes joining the teeth to the handle can be smoothed.

\end{proof}

The main result of this decomposition is the following. Recall (See Proposition \ref{prop_comb_curve_reduction}) that all curves in $\moda{2g+2}{n}$ are comb curves. We claimed that the components corresponding to even parts contribute zero to the Gromov-Witten invariant---this is equivalent to saying that if a comb curve has a tooth which is joined to the handle at a non-orbifold point, then it contributes zero to the total invariant.

\begin{lemma}\label{lem_even_parts}
Let $v \in A[2]$ be fixed, and let $\overline{M}_{0;2g+2\mid 2k,v}$ denote the component consisting of those twisted stable maps into $[A/\pm1]$ which collapse a component to $v$ with $2k$ marked stacky points, and with $2g+2$ marked stacky points elsewhere. Then
\[
\int_{[\overline{M}_{0;2g+2\mid 2k,v}]^{red}} 1 = 0
\] 
\end{lemma}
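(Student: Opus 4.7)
The plan is to apply the orbifold splitting formula of Abramovich--Graber--Vistoli along the node joining the collapsing tooth to the handle in the component $\overline{M}_{0;2g+2\mid 2k,v}$, which is clopen by Proposition \ref{prop_disjoint_union}. The first step is to identify the type of that node. A representable map of a genus $0$ twisted orbi-curve into $B\Z/2$ requires the total number of $\Z/2$-stacky special points (marked points plus stacky nodes) on any connected sub-curve to be even, so that the product of local monodromies is trivial in $\Z/2$. Since the tooth carries $2k$ stacky marked points and maps representably to the stacky point $v \in [A/\pm1]$, the node joining the tooth to the handle is forced to be non-stacky, and its evaluation lands in the untwisted sector $A \subset I[A/\pm1]$.

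Next I would decompose the reduced virtual class via the splitting formula, schematically
\[
[\overline{M}_{0;2g+2\mid 2k,v}]^{red} \;\sim\; \Delta_A^!\,\big([\overline{M}_{\mathrm{handle}}]^{red} \times [\overline{M}_{\mathrm{tooth}}]^{vir}\big),
\]
where $\overline{M}_{\mathrm{handle}}$ is the moduli of the main component (carrying the class $\beta$, with $2g+2$ stacky marked points plus one untwisted node marked point), $\overline{M}_{\mathrm{tooth}}$ is the moduli of the contracted tooth, and $\Delta_A \colon A \to A \times A$ is the diagonal in the untwisted sector. The tooth is contracted to $v \in [A/\pm1]$, so its node-evaluation map is the constant map to $v \in A$. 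The projection formula then reduces the integral of $1$ to
\[
\int_{[\overline{M}_{\mathrm{handle}}]^{red}}\!\!\mathrm{ev}_{\mathrm{node}}^*[v]\;\cdot\;\int_{[\overline{M}_{\mathrm{tooth}}]^{vir}}\!\!1,
\]
where $[v] \in H^4(A;\Q)$ is the class of the point $v$.

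The vanishing then follows from a dimension count: the reduced virtual dimension of $\overline{M}_{\mathrm{handle}}$ is $1$ complex ($=2$ real), while $\mathrm{ev}_{\mathrm{node}}^*[v]$ has real cohomological degree $4$. The cap product is therefore zero, yielding the lemma.

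The hard part will be justifying that the AGV splitting formula applies to the reduced virtual class as defined here via the twistor family construction, rather than algebraically. The reduction adds $+1$ to the virtual dimension of each contributing moduli space, and one must carefully check that this shift is compatible with the gluing along a non-smoothable node of a clopen component; equivalently, one needs to identify where in the deformation theory the extra direction appears (presumably as a twistor-direction deformation of the constant map from the tooth). Once that compatibility is in place, the dimensional argument above produces the vanishing directly.
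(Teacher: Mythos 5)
Your proposal is correct and takes essentially the same approach as the paper: the parity of the $2k$ stacky points forces the node to be untwisted, the reduced class is split by the AGV gluing formula along the diagonal of the untwisted sector, and the resulting Künneth pairing vanishes by combining constancy of the tooth's node-evaluation with the fact that both factors carry (reduced) virtual classes of complex dimension one. The paper merely organizes the final step in the opposite order --- restricting first by dimension to the degree-two Künneth terms and then killing them by constancy, rather than isolating the $[\mathrm{pt}]\otimes 1$ term and killing it by dimension --- and, like you, it asserts rather than verifies the compatibility of the twistor-family reduced class with the splitting formula.
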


\begin{proof}
We will assume without loss of generality (by induction on the number of collapsing components) that the source curve consists of two components, $\Sigma_1$ and $\Sigma_2$ joined at a non-stacky point $P$, and with $\Sigma_1$ being the handle.

We begin with a little notation. Define
\[
M_1 = \modac{2g+2,1}{n}
\]
to be the moduli space of twisted stable maps into $[A/\pm1]$ with no collapsing components and with one ordinary (i.e. non-stacky) marked point. This has (reduced) virtual dimension 1. Similarly, define
\[
M_2 = \moda[B\Z/2]{2k,1}{0}
\]
to be the moduli space of twisted stable maps into $B\Z/2 = [\C^2/\pm1]$, which we think of as the local model for one of the stacky points in $[A/\pm1]$. This also has virtual dimension 1.

Now, unlike the case where the node is stacky, we do not have an isomorphism
\[
M_{0;2g+2\mid 2k,v} \cong M_1 \times M_2
\]
due to the fact that the curves are joined at a non-stacky point. What we do have, however, is a morphism $\iota : M_{0;2g+2\mid 2k,v} \to M_1 \times M_2$ which fits into the gluing diagram
\[
\xymatrix{
M_{0;2g+2\mid 2k, v} \ar[d] \ar[rr]^\iota & & M_1 \times M_2 \ar[d]^{ev \times ev} \\
I[A/\pm1] \ar[rr]_\Delta & & I[A/\pm1] \times I[A/\pm1]
}
\]
where the evaluation maps on the right are from the non-stacky points. From this we obtain (see \cite[Proposition 5.3.1]{agv_long}) that
\[
[M_{0;2g+2\mid 2k, v}]^{red} = \Delta^!\big([M_1]^{red} \times [M_2]^{red}\big).
\]
Now, since the evaluations are at non-stacky points, they in fact lie in the non-twisted sector, which is $[A/\pm1]$. Since $[A/\pm1]$ satisfies Poincar\'e duality (rationally, at least), we can choose a basis $(\gamma_i)$ of the cohomology of $A/\pm1$ so that this is given by
\[
\Delta^!\big([M_1]^{red} \times [M_2]^{red}\big) = \sum_i \int_{ev_*[M_1]^{red}}\gamma_i\int_{ev_*[M_2]^{red}}\gamma^i.
\]
Since $[M_1]^{red}$ and $[M_2]^{red}$ are classes in $H_2$, we see that the only cohomology classes which may contribute are those in dimension 2. However, since the map $ev : M_2 \to [A/\pm1]$ is constant (recall that this is a collapsing component), it doesn't intersect any classes in $H^2$, and so each of the integrals $\int_{ev_*[M_2]^{red}}\gamma^i$ are zero. It follows then that $[M_{0;2g+2\mid 2k,v}]^{red} = 0$ as claimed.
\end{proof}

We will next provide the proofs of several facts concerning the non-enumerative nature of the Gromov-Witten invariants stated in section \ref{sec_gw_hyper}.

First, recall that we have the decomposition of moduli spaces
\[
U_{\lambda, n} = \modac{\bk_\lambda}{n} \times \prod_{v\in A[2]} \prod_{i=1}^{r_v} \moda[B\Z/2]{\lambda_i^v+1}{0}
\]
together with a projection map $p_\lambda : U_{\lambda,n} \to \modac{\bk_\lambda}{n}$.

\begin{theorem}[Theorem \ref{thm_virt_degree}]
Let $\lambda = (\lambda^v)_{v\in A[2]}$ be a collection of partitions of $\bk$, all of which consist of odd parts. Then the virtual degree of $p_\lambda$ is $\big(-\frac{1}{4}\big)^{\frac{1}{2}(|\bk| - |\bk_\lambda|)}$. That is,
\[
(p_\lambda)_*[U_{\lambda,n}]^{red} = \Big(-\frac{1}{4}\Big)^{\frac{1}{2}(|\bk| - |\bk_\lambda|)}\big[\modac{\bk_\lambda}{n}\big]^{red}.
\]
\end{theorem}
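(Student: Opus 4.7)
The plan is to combine the product decomposition of $U_{\lambda,n}$ stated in the theorem with the orbifold splitting formula for virtual fundamental classes, so as to reduce the virtual-degree computation to a product of purely local contributions, one per tooth.  Because every $\lambda_i^v$ is odd, the parities force every node joining a tooth to the handle to be stacky, so the splitting takes place entirely in the twisted sector $A[2] \subset I[A/\pm 1]$.  Iterating the splitting axiom \cite[Proposition 5.3.1]{agv_long} across these nodes, just as in the proof of Lemma \ref{lem_even_parts}, and using the fact that the reduction of the obstruction theory is supported on the handle component (the teeth collapse to $B\Z/2$, contributing no surface-deformation data), one obtains
\[
(p_\lambda)_*[U_{\lambda,n}]^{red} = \Bigl[\modac{\bk_\lambda}{n}\Bigr]^{red} \cdot \prod_{v\in A[2]}\prod_{i=1}^{r_v} c(\lambda_i^v)
\]
for some local tooth contribution $c(m) \in \mathbb{Q}$ that depends only on the odd integer $m$.

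The next step is to identify $c(m)$ concretely.  Near a $2$-torsion point the target $[A/\pm 1]$ is modelled by $[\C^2/\pm 1]$, so the formal normal data of a collapsed tooth is governed by the two-dimensional sign representation of $\Z/2$.  The contribution $c(m)$ is then the integral of (the top Chern class of) $R^1\pi_*f^*T_{[A/\pm 1]}$, together with the twisted-sector pairing at the node, over $\moda[B\Z/2]{m+1}{0}$.  The coarse moduli of this latter stack is $\overline{M}_{0, m+1}$, and decomposing $f^*T_{[A/\pm 1]}$ into its two sign eigensheaves on the twisted $\PP^1$ turns the integrand into a combination of $\psi$-class integrals on $\overline{M}_{0,m+1}$.

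The main obstacle is the evaluation $c(m) = \bigl(-\tfrac{1}{4}\bigr)^{(m-1)/2}$.  I would attempt this by induction on $m$, using the boundary recursion on $\overline{M}_{0, m+1}$: the boundary strata of the form $\overline{M}_{0, a+1} \times \overline{M}_{0, b+1}$ (with $a, b$ odd and $a + b = m + 1$) correspond to degenerating a tooth into two sub-teeth joined at a new stacky node, so that the induction hypothesis supplies $c(a)\,c(b)$, and one must verify that the twisted-sector gluing pairing at the new node fits together consistently.  Alternatively, $c(m)$ can be determined by comparison with the small cases $m = 3, 5$, where the relevant integrals on $\overline{M}_{0, 4}$ and $\overline{M}_{0, 6}$ admit direct evaluation; the pattern $c(m) = (-1/4)^{(m-1)/2}$ can then be verified by matching against the explicit low-genus enumeration carried out in Section \ref{sec_low_genera}, which is proven independently of the crepant resolution conjecture and thus provides an unambiguous normalization check.

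Once $c(m) = (-1/4)^{(m-1)/2}$ is in hand, multiplying over all teeth yields
\[
\prod_{v \in A[2]}\prod_{i=1}^{r_v} \Bigl(-\tfrac{1}{4}\Bigr)^{(\lambda_i^v - 1)/2} = \Bigl(-\tfrac{1}{4}\Bigr)^{\tfrac{1}{2}\sum_{v,i}(\lambda_i^v - 1)} = \Bigl(-\tfrac{1}{4}\Bigr)^{\tfrac{1}{2}(|\bk| - |\bk_\lambda|)},
\]
using $\sum_{v,i}\lambda_i^v = |\bk|$ and $\sum_{v,i} 1 = |\bk_\lambda|$, which completes the proof.
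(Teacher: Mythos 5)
Your overall architecture is the same as the paper's: use the product decomposition of $U_{\lambda,n}$, reduce the virtual degree to a local contribution per tooth over $\moda[B\Z/2]{\lambda_i^v+1}{0}$ (all nodes being stacky since the parts are odd), and multiply. The genuine gap is in your identification of the local integrand. You describe $c(m)$ as the integral of ``the top Chern class of $R^1\pi_*f^*T_{[A/\pm1]}$'' together with an unspecified node pairing, but this is dimensionally impossible as stated: $\moda[B\Z/2]{m+1}{0}$ has dimension $m-2 = 2g-1$ (with $g = \tfrac{m-1}{2}$), while the tooth's obstruction piece $H^1(\Sigma_i,\rho_1\oplus\rho_1)\cong \mathbb{E}_i^\vee\oplus\mathbb{E}_i^\vee$ has rank $2g$, so its top Chern class has the wrong degree. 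The missing ingredient is the obstructed smoothing of the node joining the tooth to the handle: the geometric obstruction sheaf $Obs(\Sigma,f)$ is the quotient of $Obs(f)$ by $\pi_*(T_{P_i}\Sigma_0\otimes T_{P_i}\Sigma_i)$, and this excess line bundle contributes the factor $1/(1-\tfrac12\psi_1)$ (the $\tfrac12$ coming from the stacky structure of the node), so the per-tooth contribution is
\[
c(m)=\int_{\moda[B\Z/2]{m+1}{0}}\frac{c(\mathbb{E})^2}{1-\tfrac12\psi_1}=\Bigl(-\tfrac14\Bigr)^{(m-1)/2},
\]
which the paper takes from the cited computations of Wise and Gillam rather than proving from scratch.

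Both of your proposed routes to the value $(-1/4)^{(m-1)/2}$ are also flawed. The induction asserts without justification that degenerating a tooth produces exactly $c(a)c(b)$; Hodge and $\psi$-classes do not restrict to boundary strata so cleanly, and the actual derivation (mirrored in the paper's Proposition \ref{prop_double_psi}) proceeds via the topological recursion relations plus the string equation, assembled into a generating function $h(q)=2\sin(q/2)$ satisfying $h''=-\tfrac14 h$; cross terms do appear and must cancel, which your sketch does not address. More seriously, your normalization check against Section \ref{sec_low_genera} cannot determine $c(m)$ at all: the low-genus counts there are carried out after discarding all collapsing components, i.e.\ they verify the invariants $GW^\circ_{\bk,n}$, which by construction contain no tooth factor; and connecting them to the full invariants $GW_{\bk,n}$ would require Theorem \ref{thm_substitution}, whose proof already invokes Theorem \ref{thm_virt_degree} --- so this check is circular. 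The small cases $m=3,5$ can instead be evaluated directly as Hodge integrals on the two- and four-pointed spaces, but the general case needs the TRR argument or the cited references.
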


The proof of this is virtually identical to that in \cite[Section 3.6]{wise}. Let $\pi : \mathscr{C} \to U_{\lambda, n}$ denote the universal curve, and let $\Sigma = \Sigma_0 \cup \Sigma_1 \cup \cdots \cup \Sigma_k$ denote a comb curve. Since all deformations of the nodes which join the teeth to the handle are obstructed, we have the exact sequence
\begin{equation}\label{eq_exact_sequence_hodge}
0 \to \bigoplus_{P_i} \pi_*(T_{P_i}\Sigma_0 \otimes T_{P_i} \Sigma_i) \to Obs(f) \to Obs(\Sigma, f) \to 0.
\end{equation}
We will compute $Obs(f)$, and use this exact sequence to compute the obstruction bundle $Obs(\Sigma,f)$.

\begin{lemma}
Over a point $[f : \Sigma \to \mathscr{A}/\pm1]$, the bundle $Obs(f)$ is isomorphic to 
\[
Obs(f) \cong H^1\big(\Sigma_0, f^*T[\mathscr{A}/\pm1]|_{\Sigma_0}\big)\oplus\bigoplus_{i=1}^k H^1(\Sigma_i, \rho_1 \oplus \rho_1)
\]
where $\rho_1$ is the non-trivial representation of $\Z/2$.
\end{lemma}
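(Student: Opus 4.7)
The plan is to compute $Obs(f) = H^1(\Sigma, f^*T[\mathscr{A}/\pm1])$ by applying the normalization sequence to the twisted curve $\Sigma$ and exploiting the fact that the tangent bundle of $[\mathscr{A}/\pm1]$ has vanishing $\Z/2$-invariants at every stacky point. The key observation underlying everything is that the nodes joining the teeth $\Sigma_i$ to the handle $\Sigma_0$ are stacky $\Z/2$-nodes: this follows because Lemma \ref{lem_even_parts} restricts attention to odd $\lambda_i^v$, and the parity of stacky marked points on each tooth then forces the attaching node to carry nontrivial $\Z/2$-isotropy.

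First I would write down the normalization sequence
\[
0 \to \mathcal{O}_\Sigma \to \nu_*\mathcal{O}_{\tilde\Sigma} \to \bigoplus_{P_i} \mathcal{O}_{P_i} \to 0,
\]
where $\nu : \tilde\Sigma = \Sigma_0 \sqcup \Sigma_1 \sqcup \cdots \sqcup \Sigma_k \to \Sigma$ is the normalization, and tensor it with $f^*T[\mathscr{A}/\pm1]$. In the \'etale local model $[\{xy = 0\}/\pm1]$ at a stacky node (with balanced action $x \mapsto -x$, $y \mapsto -y$), the cokernel of the tensored sequence is a skyscraper whose value is the $\Z/2$-invariants of $(f^*T[\mathscr{A}/\pm1])|_{P_i}$. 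Since the involution on $A$ acts on the tangent space $T_v A$ at any $v \in A[2]$ as the scalar $-1$, this fibre is $\rho_1 \oplus \rho_1$ as a $\Z/2$-representation, and its invariants vanish. Hence the cokernel disappears and, upon passing to cohomology,
\[
H^1(\Sigma, f^*T[\mathscr{A}/\pm1]) \cong H^1(\Sigma_0, f^*T[\mathscr{A}/\pm1]|_{\Sigma_0}) \oplus \bigoplus_{i=1}^k H^1(\Sigma_i, f^*T[\mathscr{A}/\pm1]|_{\Sigma_i}).
\]

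The second step is to identify $f^*T[\mathscr{A}/\pm1]|_{\Sigma_i}$ for each collapsing tooth. Since $\Sigma_i$ is contracted by $f$ to a single stacky point $v \in A[2]$, the map factors through the residual gerbe $B\Z/2 \hookrightarrow [\mathscr{A}/\pm1]$, and the restricted bundle is the trivial rank two sheaf on $\Sigma_i$ with fibre $T_v A$ equipped with its $\pm 1$-action. As observed, this is exactly $\rho_1 \oplus \rho_1$, which yields the claimed formula.

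The main obstacle will be justifying the local orbifold normalization statement above: that tensoring the nodal normalization sequence with a coherent sheaf whose stalk at a $\Z/2$-node has vanishing invariants produces an isomorphism $\mathcal{F} \cong \nu_*\nu^*\mathcal{F}$. This is a standard but careful computation with $\Z/2$-equivariant modules on $[\{xy=0\}/\pm1]$, where the ordinary scheme-theoretic cokernel (the one-dimensional mismatch at a node, tensored with the stalk) must be replaced by its invariant subspace after descending to the quotient stack; this descent is exact because we are in characteristic zero. Once this local picture is in place, the rest of the argument is purely formal.
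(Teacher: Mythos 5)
There is a genuine gap, and it is concentrated in your claim that the cokernel skyscraper vanishes at each node. The target of $f$ is the twistor family $[\mathscr{A}/\pm1]$, which is a {\em three}-fold: at a stacky point the isotropy $\Z/2$ acts by $-1$ only on the two fibre directions $T_vA_b$, and acts trivially on the horizontal direction coming from the twistor base $B$. So the fibre of $f^*T[\mathscr{A}/\pm1]$ at a node $P_i$ is $\rho_1 \oplus \rho_1 \oplus \rho_0$, not $\rho_1 \oplus \rho_1$, and its invariants are one-dimensional rather than zero. Consequently the tensored normalization sequence has a nonvanishing skyscraper cokernel (one copy of $\C$ per node, from the $\rho_0$ direction), the asserted isomorphism $\mathcal{F} \cong \nu_*\nu^*\mathcal{F}$ fails, and your identification of the restricted bundle on a collapsed tooth as ``the trivial rank two sheaf'' is off by a trivial $\rho_0$ summand. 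Note that your own displayed formulas are internally inconsistent on this point: you write $f^*T[\mathscr{A}/\pm1]$ in the sequences but compute its stacky fibre as if it were $f^*T[A/\pm1]$.

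The conclusion survives, but it requires the extra step that the paper actually performs. Passing to the long exact sequence in cohomology, the cokernel contribution $\bigoplus_i H^0(P_i, T|_{P_i}) = \bigoplus_i (T|_{P_i})^{\Z/2} \cong \bigoplus_i \C$ is killed not because it vanishes but because the restriction map $H^0(\Sigma_i, \rho_0) \to H^0(P_i, \rho_0)$ is surjective (constants on each tooth restrict onto the value at the node), while $H^0(\Sigma_i,\rho_1) = 0 = H^0(P_i,\rho_1)$ since the teeth carry stacky points and $P_i \cong B\Z/2$. This gives the splitting $H^1(\Sigma, T) \cong H^1(\Sigma_0, T|_{\Sigma_0}) \oplus \bigoplus_{i=1}^k H^1(\Sigma_i, T|_{\Sigma_i})$, and then $H^1(\Sigma_i, \rho_0) = 0$ on a genus-zero tooth discards the twistor summand, yielding $H^1(\Sigma_i, \rho_1 \oplus \rho_1)$ as in the statement. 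Your parity argument that the attaching nodes are stacky (odd parts $\lambda_i^v$ plus the even total count of stacky points on a representable genus-zero tooth) is correct and matches the paper's setup; the only repair needed is to carry the $\rho_0$ summand through the sequence instead of discarding it at the node.
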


\begin{proof}
As above, let $f: \Sigma \to [\mathscr{A}/\pm1]$ be a comb curve with teeth $\Sigma_i$ for $1 \leq i \leq k$, and let $T = f^*T[\mathscr{A}/\pm1]$. Recall that over such a point that
\[
Obs(f) \cong H^1(\Sigma, T).
\]
To compute this, we look at the normalization sequence
\begin{multline*}
H^0(\Sigma_0, T) \oplus \bigoplus_{i=1}^k H^0(\Sigma_i, T|_{\Sigma_i}) \to \bigoplus_{i=1}^k H^0(P_i, T|_{P_i}) \to \\
\to H^1(\Sigma,T) \to \bigoplus_{i=0}^k H^1(\Sigma_i, T|_{\Sigma_i}) \to 0
\end{multline*}
where $P_i$ is the node joining $\Sigma_i$ to $\Sigma_0$. As $f$ is representable, the image of $P_i$ must lie in the twisted sector, and so we see that $T|_{\Sigma_i} \cong T|_{P_i} \cong \rho_1 \oplus \rho_1 \oplus \rho_0$ where $\rho_1$ and $\rho_0$ denote the non-trivial and trivial representations of $\Z/2$, respectively. Since $\Sigma_i$ has stacky points, and since $P_i \cong B\Z/2$, we have that $H^0(\Sigma_0, \rho_1) \cong 0$ (and similarly for $P_i$). Moreover, it is clear that $H^0(\Sigma_i, \rho_0) \to H^0(P_i, \rho_0)$ is surjective. It follows then that
\[
Obs(f) \cong H^1(\Sigma, T) \cong H^1(\Sigma_0, T|_{\Sigma_0}) \oplus \bigoplus_{i=1}^k H^1(\Sigma_i, T|_{\Sigma_i}).
\]
Since $T|_{\Sigma_i} \cong \rho_1 \oplus \rho_1 \oplus \rho_0$ and $\rho_0$, being the trivial representation, has no higher cohomology, the lemma follows.
\end{proof}

\begin{proof}[Proof of Theorem \ref{thm_virt_degree}]
Since $M_{0;\bk_\lambda}^\circ$ is zero-dimensional, on components of $U_{\lambda,n}$ the summand of $Obs(f)$ coming from $H^1\big(\Sigma_0, f^*T[\mathscr{A}/\pm1]|_{\Sigma_0}\big)$ is a fixed vector space, and so it corresponds to a trivial summand. Moreover, as discussed in \cite{wise_hyper_hodge}, the terms $H^1(\Sigma_i, \rho_1 \oplus \rho_1)$ contributed a dual Hodge bundle summand; that is, $Obs(f) \cong \mathcal{O}^{\oplus d} \oplus \bigoplus_{i=1}^k \mathbb{E}_i^\vee \oplus \mathbb{E}_i^\vee$. Our exact sequence \eqref{eq_exact_sequence_hodge} thus reads
\[
0 \to \bigoplus_{P_i} \pi_*(T_{P_i}\Sigma_0 \otimes T_{P_i} \Sigma_i) \to \mathcal{O}^{\oplus d} \oplus \bigoplus_{i=1}^k \mathbb{E}_i^\vee \oplus \mathbb{E}_i^\vee \to Obs(\Sigma, f) \to 0.
\]
from which we compute that the total Chern class of $Obs(\Sigma,f)$ over $U_{\lambda, n}$ is given by
\[
\underbrace{c(\mathcal{O}^{\oplus d})}_{=1}\prod_{i=1}^k \frac{c(\mathbb{E}_i^\vee)^2}{c\big(\pi_*(T_{P_i}\Sigma_0\otimes T_{P_i}\Sigma_i)\big)}.
\]
We need to integrate this obstruction class over the fibres of the projection map $p_\lambda : U_{\lambda,n} \to \modac{\bk_\lambda}{n}$. Specifically, we need to compute the integrals (using the notation of \cite{gil_unpublished})
\begin{equation}\label{eq_hodge_psi_wise}
\int_{\moda[B\Z/2]{2g_i+2}{0}} \frac{c(\mathbb{E}_i)^2}{1 - \tfrac{1}{2}\psi_1}
\end{equation}
which are computed in \cite{wise_hyper_hodge,gil_unpublished} to be $(-\frac{1}{4})^{g_i}$.

Since the fibre is the product
\[
\prod_{v \in A[2]} \prod_{i=1}^{r_v} \moda[B\Z/2]{\lambda_i^v+1}{0}
\]
it follows that the degree of the pushforward is
\begin{align*}
\prod_{v \in A[2]}\prod_{i=1}^{r_v} \Big(-\frac{1}{4}\Big)^{\tfrac{1}{2}(\lambda_i^v - 1)}
 &= \prod_{v \in A[2]} \Big(-\frac{1}{4}\Big)^{\tfrac{1}{2}\bk(v) - \tfrac{1}{2}r_v} \\
 &= \Big(-\frac{1}{4}\Big)^{\tfrac{1}{2}(|\bk| - |\bk_\lambda|)}
\end{align*}
as claimed.
\end{proof}

%
%

Consider now case that $A \cong S \times F$. The same reasoning as above yields that on any component of $\moda{2g+2}{n}$ which collapses a component between two non-collapsing components (see figure \ref{fig_collapse_between}), the virtual class is obtained by computing the integral
\[
\int_{\moda[B\Z/2]{2g+2}{0}} \frac{c(\mathbb{E})^2}{(1 - \tfrac{1}{2}\psi_1)(1 - \tfrac{1}{2}\psi_2)}
\]
which arises due to the two nodes whose smoothings are obstructed.

\begin{proposition}\label{prop_double_psi}
Let $g > 0$. Then the integral
\[
\int_{\moda[B\Z/2]{2g+2}{0}} \frac{c(\mathbb{E})^2}{(1 - \tfrac{1}{2}\psi_1)(1 - \tfrac{1}{2}\psi_2)}
\]
is zero.
\end{proposition}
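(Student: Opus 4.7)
The plan is to evaluate the integral by reducing it to the single-$\psi$ integral that was used in the proof of Theorem \ref{thm_virt_degree}, namely the identity
\[
\int_{\moda[B\Z/2]{2g+2}{0}} \frac{c(\mathbb{E})^2}{1 - \tfrac{1}{2}\psi_1} = \Big(-\frac{1}{4}\Big)^g
\]
established in \cite{wise_hyper_hodge, gil_unpublished}. The moduli space $\moda[B\Z/2]{2g+2}{0}$ is, up to its generic $\Z/2$-stabilizer coming from the hyperelliptic involution, the standard admissible cover compactification of the hyperelliptic locus. Its coarse space is a finite quotient of $\overline{M}_{0, 2g+2}$, on which Witten's formula $\int_{\overline{M}_{0,n}} \psi_1^{a_1}\cdots\psi_n^{a_n} = (n-3)!/\prod a_i!$ makes all pure $\psi$-integrals explicit.

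First, I would apply Mumford's relation $c(\mathbb{E})c(\mathbb{E}^\vee) = 1$, which on the hyperelliptic locus allows us to rewrite $c(\mathbb{E})^2 = c(\mathbb{E})/c(\mathbb{E}^\vee)$ and expresses the Hodge classes $\lambda_k$ in terms of $\lambda_1$, i.e.~in terms of a polynomial in $\psi$-classes at Weierstrass points and boundary divisors of $\overline{M}_{0, 2g+2}$. This reduces $c(\mathbb{E})^2$ to a $\psi$-polynomial with explicit boundary corrections.

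Second, I would package the two-variable family of integrals into a generating function
\[
I(a,b) := \int_{\moda[B\Z/2]{2g+2}{0}} \frac{c(\mathbb{E})^2}{(1 - a\psi_1)(1 - b\psi_2)},
\]
symmetric in $a, b$. The Wise--Gillam formula gives the two restrictions $I(a,0)$ and $I(0,b)$ as polynomials in $a$, respectively $b$, and one can extract the cross-term by the algebraic identity $(1 - a\psi_1)(1 - b\psi_2) = (1 - a\psi_1) + (1 - b\psi_2) - 1 + ab\psi_1\psi_2$, which translates into a relation
\[
I(a,b) \;=\; I(a,0) + I(0,b) \;-\; \int c(\mathbb{E})^2 \;+\; ab \int \frac{\psi_1\psi_2\, c(\mathbb{E})^2}{(1-a\psi_1)(1-b\psi_2)}.
\]
At $a = b = \tfrac{1}{2}$ this determines $I(\tfrac12,\tfrac12)$ up to the single auxiliary integral $\int c(\mathbb{E})^2$, whose only non-trivial contribution sits in the top degree $2\lambda_{g-1}\lambda_g$ and can be evaluated by a standard Hodge integral computation on the hyperelliptic locus.

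The final step is to verify, either inductively in $g$ or by a direct combinatorial expansion, that the resulting expression for $I(\tfrac12,\tfrac12)$ collapses to zero for $g > 0$. I expect the main obstacle to be the explicit evaluation of the residual top Hodge integral $\int \lambda_{g-1}\lambda_g$ and a clean handling of the boundary corrections produced in the first step; a possible workaround is to re-express the integral as a fixed-point contribution in an equivariant localization on a $[\C^2/\pm 1]$-bundle with two distinguished sections and to exploit a symmetry of the target to force the vanishing, in parallel with how the single-$\psi$ integral in \cite{wise_hyper_hodge} is set up.
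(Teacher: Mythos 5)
Your reduction fails at its central step: the relation you extract from the identity $(1-a\psi_1)(1-b\psi_2) = (1-a\psi_1)+(1-b\psi_2)-1+ab\psi_1\psi_2$ is a tautology and determines nothing. Dividing by $(1-a\psi_1)(1-b\psi_2)$, multiplying by $c(\mathbb{E})^2$ and integrating does give
\[
I(a,b) \;=\; I(a,0)+I(0,b)-\int c(\mathbb{E})^2 \;+\; ab\int\frac{\psi_1\psi_2\,c(\mathbb{E})^2}{(1-a\psi_1)(1-b\psi_2)},
\]
but the last integral is not an auxiliary quantity: since
\[
\frac{ab\,\psi_1\psi_2}{(1-a\psi_1)(1-b\psi_2)} \;=\; \Bigl(\frac{1}{1-a\psi_1}-1\Bigr)\Bigl(\frac{1}{1-b\psi_2}-1\Bigr),
\]
it equals exactly $I(a,b)-I(a,0)-I(0,b)+\int c(\mathbb{E})^2$, and your relation collapses to $I(a,b)=I(a,b)$. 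So $I(\tfrac12,\tfrac12)$ is \emph{not} pinned down by the Wise--Gillam one-$\psi$ evaluations together with $\int c(\mathbb{E})^2$: no purely formal rearrangement of the integrand can relate a two-descendant integral to one-descendant integrals, because such a relation must come from actual geometry of the moduli space (how $\psi$-classes compare to boundary divisors), which your manipulation never invokes. The same issue undercuts your first step: Mumford's relation $c(\mathbb{E})c(\mathbb{E}^\vee)=1$ only says $c(\mathbb{E})^2=c(\mathbb{E}^\vee)^{-2}$ and does not by itself convert $c(\mathbb{E})^2$ into a controlled polynomial in $\psi$-classes; the ``boundary corrections'' you defer are where all the content lies.

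The missing idea, and what the paper's proof supplies, is the genus-$0$ topological recursion relation on the space of twisted stable maps to $[\C^2/\pm1]$: writing a $\psi$-class at one marked point as a sum of boundary divisors produces quadratic identities expressing the two-descendant brackets $\langle v^{2g-1},\tau_{a+1}v,\tau_b v, v\rangle$ in terms of products of one-descendant brackets, with parity constraints (each component must carry an even number of stacky points) governing the combinatorics. Summing these against $2^{-a-b-1}$ and $q^{2g-1}/(2g-1)!$ yields $H(q)-h''(q)=\bigl(h'(q)-1\bigr)h''(q)+\tfrac14 h(q)h'(q)$, where $h(q)=2\sin(q/2)$ is the known one-$\psi$ generating function; since $h''=-\tfrac14 h$, the right-hand side is $-h''(q)$, forcing $H\equiv 0$, i.e.\ the vanishing of every integral in the proposition. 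If you want to salvage your outline, replacing your algebraic identity with this TRR step is the essential repair; the localization fallback you mention is too undeveloped to assess, but any correct route must encode the even/odd stacky-point constraint that drives the paper's recursion.
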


\begin{proof}
We follow a method similar to the one given in \cite{gil_unpublished} to prove that \eqref{eq_hodge_psi_wise} is equal to $\left(-\frac{1}{4}\right)^g$. More specifically, we assemble the Gromov-Witten invariants into a generating function, which we will see must be equal to zero.

Let $\mathscr{X} = [\C^2/\pm1]$. Let 1 and $v$, respectively, denote the generators of the untwisted and twisted sectors of $H^*(\mathscr{X})$, and let $\langle \cdots \rangle$ denote the integral
\[
\int_{\overline{M}} \cdots
\]
where the integral is over the appropriate moduli space of genus 0 twisted stable maps into $\mathscr{X}$. Let $g > 1$, and let $a, b$ be non-negative integers. The topological recursion relations in this case yield
\begin{align*}
\langle v^{2g-1}, \tau_{a+1}v, \tau_b v, v\rangle &= 2 \sum_{i=1}^{g-1} {2g-1 \choose 2i} \langle v^{2i}, \tau_a v, v\rangle\langle v^{2(g-i)-1}, \tau_b v, v, v\rangle \\
 & \quad +  2\sum_{i=1}^g {2g-1 \choose 2i-1} \langle v^{2i-1}, \tau_a v, 1\rangle\langle v^{2(g-i)}, \tau_b v, v, 1\rangle \\
 &= 2 \sum_{i=1}^{g-1} {2g-1 \choose 2i} \langle v^{2i+1}, \tau_a v\rangle\langle v^{2(g-i)+1}, \tau_b v \rangle \\
 & \quad +  2\sum_{i=1}^g {2g-1 \choose 2i-1} \langle v^{2i-1}, \tau_{a-1} v\rangle\langle v^{2(g-i)+1}, \tau_{b-1} v\rangle
\end{align*}
where the second equality is given by the string equation. For $g = 1$, this reads
\[
\langle v^2, \tau_{a+1}v, \tau_b v \rangle = 2 \langle v, \tau_a v, 1 \rangle \langle v, \tau_b v, 1\rangle
\]
due to the requirement that each component of the curve have an even number of stacky points.

Multiplying both sides of this by $2^{-a-b-1}$ and summing $a$ and $b$ from 0 to $\infty$ yields
\begin{multline}\label{trr_2}
\left\langle v^{2g}, \frac{v}{1 - \tfrac{1}{2}\psi_1}, \frac{v}{1 - \tfrac{1}{2}\psi_2}\right\rangle - \left\langle v^{2g+1}, \frac{v}{1 - \tfrac{1}{2}\psi_2}\right\rangle \\
= \sum_{i=1}^{g-1} {2g-1\choose 2i} \left\langle v^{2i+1}, \frac{v}{1 - \tfrac{1}{2}\psi_1}\right\rangle\left\langle v^{2(g-i)+1}, \frac{v}{1 - \tfrac{1}{2}\psi_2}\right\rangle \\
+ \frac{1}{4} \sum_{i=1}^g {2g-1\choose 2i-1} \left\langle v^{2i-1}, \frac{v}{1 - \tfrac{1}{2}\psi_1}\right\rangle\left\langle v^{2(g-i)+1}, \frac{v}{1 - \tfrac{1}{2}\psi_2}\right\rangle
\end{multline}

We now assemble these into a generating series. Let $H(q)$ and $h(q)$ be given by
\begin{gather*}
H(q) = \sum_{g=1}^\infty \left\langle v^{2g}, \frac{v}{1 - \tfrac{1}{2}\psi_1}, \frac{v}{1 - \tfrac{1}{2}\psi_2}\right\rangle \frac{q^{2g-1}}{(2g-1)!} \\
h(q) = q + \sum_{g=1}^\infty \left\langle v^{2g+1}, \frac{v}{1 - \tfrac{1}{2}\psi}\right\rangle \frac{q^{2g+1}}{(2g+1)!}
\end{gather*}
From \cite{gil_unpublished}, we see that $h(q) = 2\sin(q/2)$. Moreover, if we multiply \eqref{trr_2} by $\frac{q^{2g-1}}{(2g-1)!}$ and sum from $g = 1$ to $\infty$, we obtain
\begin{multline}\label{trr_4}
H(q) - h''(q) = \\
\sum_{g=1}^\infty \sum_{i=1}^{g-1}\left\langle v^{2i+1}, \frac{v}{1 - \tfrac{1}{2}\psi_1}\right\rangle \frac{q^{2i}}{(2i)!}\left\langle v^{2(g-i)+1}, \frac{v}{1 - \tfrac{1}{2}\psi_2}\right\rangle \frac{q^{2(g-i)-1}}{(2g - 2i - 1)!}\\
+ \frac{1}{4} \sum_{g=1}^\infty \sum_{i=1}^g \left\langle v^{2i-1}, \frac{v}{1 - \tfrac{1}{2}\psi_1}\right\rangle \frac{q^{2i-1}}{(2i-1)!}\left\langle v^{2(g-i)+1}, \frac{v}{1 - \tfrac{1}{2}\psi_2}\right\rangle \frac{q^{2(g-i)}}{(2g-2i)!}
\end{multline}
A somewhat tedious computation yields that the right-hand side of \eqref{trr_4} is equal to
\[
\left(h'(q) - 1\right)h''(q) + \frac{1}{4}h(q)h'(q) = h'(q)h''(q) - h''(q) + \frac{1}{4}h(q)h'(q).
\]
Since $h''(q) = -\frac{1}{4}h(q)$, this is simply equal to $-h''(q)$. It follows then that $H(q) = 0$. Since
\[
\left\langle v^{2g}, \frac{v}{1 - \tfrac{1}{2}\psi_1}, \frac{v}{1 - \tfrac{1}{2}\psi_2}\right\rangle = \int_{\moda[B\Z/2]{2g+2}{0}} \frac{c(\mathbb{E})^2}{(1 - \tfrac{1}{2}\psi_1)(1 - \tfrac{1}{2}\psi_2)}
\]
the claim follows.
\end{proof}

%
%

Next, recall that we defined the generating functions
\begin{gather*}
F_n(z_v) = \sum_{\bk : A[2] \to \zp} GW_{\bk,n} \prod_{v \in A[2]} \frac{z_v^{\bk(v)}}{\bk(v)!} \\
F_n^\circ(x_v) = \sum_{\bk : A[2] \to \zp} GW_{\bk,n}^\circ \prod_{v \in A[2]} \frac{x_v^{\bk(v)}}{\bk(v)!}.
\end{gather*}
We prove now the following.

\begin{theorem}[Theorem \ref{thm_substitution}]
The two generating functions $F_n$ and $F_n^\circ$ are equal after the substitution $x_v = 2 \sin(z_v/2)$.
\end{theorem}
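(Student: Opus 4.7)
The plan is to expand $GW_{\bk,n}$ using the stratification of $\overline{M}_{\bk,n}$ by multipartition type and then reorganize the resulting double sum as a composition of generating functions. First, by Proposition \ref{prop_disjoint_union} and Lemma \ref{lem_even_parts}, one would write
\[
GW_{\bk,n} = \sum_{\lambda \vdash \bk,\ \text{odd parts}} \deg[\overline{M}_{\lambda,n}]^{red}.
\]
For each multipartition $\lambda = (\lambda^v)_{v\in A[2]}$, the stratum $\overline{M}_{\lambda,n}$ decomposes further into connected components indexed by labeled set partitions of the $\bk(v)$ marked points at each $v$ whose block-size multiset matches $\lambda^v$; the number of such labeled decompositions at $v$ is $\bk(v)!/(\prod_i \lambda^v_i!\prod_j m_j(\lambda^v)!)$, where $m_j(\lambda^v)$ denotes the multiplicity of the part $j$ in $\lambda^v$. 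Each such component is isomorphic to $U_{\lambda,n}$ and contributes $(-1/4)^{(|\bk|-|\bk_\lambda|)/2}\,GW^\circ_{\bk_\lambda,n}$ to the degree, by Theorem \ref{thm_virt_degree}.

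Substituting into $F_n(z_v)$ and reorganizing the double sum by indexing by $\bk_\lambda$ rather than $\bk$ yields
\[
F_n(z_v) = \sum_{\bk_\lambda:A[2]\to\zp} GW^\circ_{\bk_\lambda,n} \prod_{v\in A[2]} \left[\sum_{\lambda^v}\frac{1}{\prod_j m_j(\lambda^v)!}\prod_i\frac{(-1/4)^{(\lambda^v_i-1)/2}}{\lambda^v_i!}z_v^{\lambda^v_i}\right],
\]
where the inner sum at each $v$ ranges over unordered partitions $\lambda^v$ into exactly $\bk_\lambda(v)$ odd parts. A standard exponential-generating-function identity will then identify the bracket with $\phi(z_v)^{\bk_\lambda(v)}/\bk_\lambda(v)!$, where
\[
\phi(z) := \sum_{k\geq 0}\frac{(-1/4)^k}{(2k+1)!}z^{2k+1}.
\]
A direct power-series computation gives $\phi(z) = 2\sin(z/2)$, from which the claimed identity $F_n(z_v) = F_n^\circ\big(2\sin(z_v/2)\big)$ follows.

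The hard part will be the second step: correctly enumerating the connected components of each stratum $\overline{M}_{\lambda,n}$ and verifying the multinomial factor above. This depends delicately on the product description $U_{\lambda,n} = \modac{\bk_\lambda}{n}\times\prod_{v,i}\moda[B\Z/2]{\lambda^v_i+1}{0}$, and in particular on whether the symmetric group permuting teeth of equal size is already quotiented out in that factorization. Once this combinatorial bookkeeping is settled, the remaining steps are a routine generating-function rearrangement together with the elementary identity $\phi(z)=2\sin(z/2)$.
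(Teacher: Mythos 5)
Your proposal is correct and takes essentially the same route as the paper's Appendix proof: both rest on Proposition \ref{prop_disjoint_union}, Lemma \ref{lem_even_parts}, and Theorem \ref{thm_virt_degree}, together with the count of set partitions into \emph{unordered} odd blocks---your factor $\bk(v)!/\big(\prod_i \lambda^v_i!\prod_j m_j(\lambda^v)!\big)$, summed over partitions with a fixed number of parts, is exactly the paper's $s(k,\ell)$, and the $\prod_j m_j(\lambda^v)!$ in the denominator is precisely the correct resolution of the tooth-permutation issue you flag at the end. The only difference is organizational: you expand $F_n(z_v)$ over strata and resum via the exponential formula, whereas the paper substitutes $x_v = 2\sin(z_v/2)$ into $F_n^\circ$, re-indexes, and reduces the claim to the identity $GW_{\bk,n} = \sum_{\ell} GW^\circ_{\bk-2\ell,n}\,(-1/4)^{|\ell|}\prod_{v} s\big(\bk(v),(\bk-2\ell)(v)\big)$.
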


\begin{proof}
We prove this by computing $F_n^\circ\big(2\sin(z_v/2)\big)$, and showing that this is equal to $F_n(z_v)$. For simplicity we use the notation $A^\vee = \map(A[2], \zp)$. Note that
\[
2\sin(z_v/2) = \sum_{\ell=0}^\infty \Big(-\frac{1}{4}\Big)^\ell\frac{z_v^{2\ell+1}}{(2\ell+1)!}.
\]
Substituting this into the definition for $F_n^\circ$ we obtain
\begin{align*}
F_n^\circ\big(2\sin(z_v/2)\big) &= \sum_{\bk \in A^\vee} GW_{\bk,n}^\circ \prod_{v \in A[2]} \frac{1}{\bk(v)!}\Bigg(\sum_{\ell=0}^\infty \Big(-\frac{1}{4}\Big)^\ell\frac{z_v^{2\ell+1}}{(2\ell+1)!}\Bigg)^{\bk(v)} \\
 &= \sum_{\bk \in A^\vee} GW_{\bk,n}^\circ \prod_{v \in A[2]} \sum_{\ell=0}^\infty \Big(-\frac{1}{4}\Big)^\ell\frac{z_v^{2\ell + \bk(v)}}{(2\ell + \bk)(v)!}s\big(2\ell + \bk(v),\bk(v)\big)
\end{align*}
where
\[
s(k,\ell) = \frac{1}{\ell!}\sum_{\substack{a_1 + \cdots + a_\ell = k \\ a_i \text{ odd}}} \binom{k}{a_1, \ldots, a_\ell}.
\]
Exchanging the order of the summation over $\ell$ and the product over $v$ in this expression we find that
\begin{align*}
F_n^\circ\big(2\sin(z_v/2)\big) &= \sum_{\bk \in A^\vee} GW_{\bk,n}^\circ \sum_{\ell \in A^\vee}\Big(-\frac{1}{4}\Big)^{|\ell|} \prod_{v \in A} \frac{z_v^{(2\ell + \bk)(v)}}{(2\ell + \bk)(v)!}s\big((2\ell + \bk)(v),\bk(v)\big) \\
&= \sum_{\bk,\ell \in A^\vee} GW_{\bk,n}^\circ \Big(-\frac{1}{4}\Big)^{|\ell|} \prod_{v \in A} \frac{z_v^{(2\ell + \bk)(v)}}{(2\ell + \bk)(v)!}s\big((2\ell + \bk)(v),\bk(v)\big)
\end{align*}
If we then re-index the summation by letting $\bk' = 2\ell + \bk$ (and for simplicity of notation omitting the $'$), we find
\[
F_n^\circ\big(2\sin(z_v/2)\big) = \sum_{\bk,\ell \in A^\vee} GW_{\bk - 2\ell,n}^\circ \Big(-\frac{1}{4}\Big)^{|\ell|} \prod_{v \in A} \frac{z_v^{\bk(v)}}{\bk(v)!}s\big(\bk(v),(\bk-2\ell)(v)\big)
\]
The claim then that these two generating functions are equal is equivalent then to the claim that
\[
GW_{\bk,n} = \sum_{\ell \in A^\vee} GW_{\bk - 2\ell,n}^\circ \Big(-\frac{1}{4}\Big)^{|\ell|} \prod_{v \in A} s\big(\bk(v), (\bk - 2\ell)(v)\big)
\]
This follows from theorem \ref{thm_virt_degree} and from the following interpretation of the numbers $s(k, \ell)$.

The number $s(k,\ell)$ gives the count of all possible ways of partitioning $k$ marked points into $\ell$ (unordered) odd-sized collections of points.

In our case, by summing over all possible functions $\ell : A[2] \to \zp$, the numbers $s\big(\bk(v), (\bk - 2\ell)(v)\big)$ yield the count of all possible ways of partitioning the $\bk(v)$ points mapping to a given 2-torsion point $v$ by bubbling off collapsing components (all of which must have odd numbers of marked points). Order does not matter as they all map to the same point. For each such possibility, the Gromov-Witten invariant is then $GW_{\bk - 2\ell}^\circ$ (the invariant coming from the non-collapsing component) times $\big(-\frac{1}{4}\big)^{|\ell|}$ (the virtual degree of the map which forgets the collapsing components), as discussed above. This proves the theorem.
\end{proof}

\bibliography{bibliography}

\begin{thebibliography}{BPVdV84}

\bibitem[AGV02]{agv}
Dan Abramovich, Tom Graber, and Angelo Vistoli.
\newblock Algebraic orbifold quantum products.
\newblock In {\em Orbifolds in mathematics and physics ({M}adison, {WI},
  2001)}, volume 310 of {\em Contemp. Math.}, pages 1--24. Amer. Math. Soc.,
  Providence, RI, 2002.

\bibitem[AGV08]{agv_long}
Dan Abramovich, Tom Graber, and Angelo Vistoli.
\newblock Gromov-{W}itten theory of {D}eligne-{M}umford stacks.
\newblock {\em Amer. J. Math.}, 130(5):1337--1398, 2008.

\bibitem[AR10]{andrews_rose}
George~E. Andrews and Simon~C.F. Rose.
\newblock Macmahon's sum-of-divisors functions, {C}hebyshev polynomials, and
  quasi-modular forms.
\newblock {\em J. Reine Angew. Math.}, 2010.
\newblock To appear.

\bibitem[BG09]{crc}
Jim Bryan and Tom Graber.
\newblock The crepant resolution conjecture.
\newblock In {\em Algebraic geometry---{S}eattle 2005. {P}art 1}, volume~80 of
  {\em Proc. Sympos. Pure Math.}, pages 23--42. Amer. Math. Soc., Providence,
  RI, 2009.

\bibitem[BL99]{bryan_leung_abelian}
Jim Bryan and Naichung~Conan Leung.
\newblock Generating functions for the number of curves on abelian surfaces.
\newblock {\em Duke Math. J.}, 99(2):311--328, 1999.

\bibitem[BL00]{bryan_leung_k3}
Jim Bryan and Naichung~Conan Leung.
\newblock The enumerative geometry of {$K3$} surfaces and modular forms.
\newblock {\em J. Amer. Math. Soc.}, 13(2):371--410 (electronic), 2000.

\bibitem[BPVdV84]{bpvdv}
W.~Barth, C.~Peters, and A.~Van~de Ven.
\newblock {\em Compact Complex Surfaces}, chapter VIII.
\newblock Springer-Verlag, 1984.

\bibitem[Che99]{chen_ratl}
X.~Chen.
\newblock Rational curves on {K}3 surfaces.
\newblock {\em J. Algebraic Geom.}, 8:245--278, 1999.

\bibitem[Gil08a]{gillam}
William Gillam.
\newblock {\em Hyperelliptic Gromov-Witten Theory}.
\newblock PhD thesis, Columbia University, 2008.

\bibitem[Gil08b]{gil_unpublished}
William Gillam.
\newblock Letter to {J}. wise, 2008.

\bibitem[G{\"o}t98]{gottsche}
Lothar G{\"o}ttsche.
\newblock A conjectural generating function for numbers of curves on surfaces.
\newblock {\em Comm. Math. Phys.}, 196(3):523--533, 1998.

\bibitem[Gra01]{graber}
Tom Graber.
\newblock Enumerative geometry of hyperelliptic plane curves.
\newblock {\em J. Algebraic Geom.}, 10(4):725--755, 2001.

\bibitem[LB92]{cav}
H.~Lange and Ch. Birkenhake.
\newblock {\em Complex Abelian Varieties}, chapter~5.
\newblock Springer-Verlag, 1992.

\bibitem[Leg28]{leg-1828}
A.~M. Legendre.
\newblock {\em Trait\'e des Fonctions Elliptiques}, pages 133--134.
\newblock Imprimerie de Huzard-Courcier, 1828.

\bibitem[Mac86]{macmahon}
P.~A. MacMahon.
\newblock Divisors of numbers and their continuations in the theory of
  partitions.
\newblock In G.~Andrews, editor, {\em Reprinted: Percy A. MacMahon Collected
  Papers}, pages 305--341. MIT Press, Cambridge, 1986.

\bibitem[MPT10]{mpt_k3}
D.~Maulik, R.~Pandharipande, and R.~P. Thomas.
\newblock Curves on {$K3$} surfaces and modular forms.
\newblock {\em J. Topol.}, 3(4):937--996, 2010.
\newblock With an appendix by A. Pixton.

\bibitem[Wis08a]{wise}
Jonathan Wise.
\newblock {\em The genus zero {G}romov-{W}itten invariants of $[{S}ym^2 \PP^2]$
  and the enumerative geometry of hyperelliptic curves in $\PP^2$}.
\newblock ProQuest LLC, Ann Arbor, MI, 2008.
\newblock Thesis (Ph.D.)--Brown University.

\bibitem[Wis08b]{wise_hyper_hodge}
Jonathan Wise.
\newblock A hyperelliptic hodge integral, 2008.

\bibitem[YZ96]{yau_zaslow}
Shing-Tung Yau and Eric Zaslow.
\newblock B{PS} states, string duality, and nodal curves on {$K3$}.
\newblock {\em Nuclear Phys. B}, 471(3):503--512, 1996.

\end{thebibliography}

\end{document}